\theoremstyle{plain}
\newtheorem{theorem}{Theorem}[section]
\newtheorem{lemma}[theorem]{Lemma}
\newtheorem{proposition}[theorem]{Proposition}
\newtheorem{assumption}[theorem]{Assumption}
\theoremstyle{definition}
\newtheorem{remark}[theorem]{Remark}
\newtheorem{notation}[theorem]{Notation}
\newtheorem{definition}[theorem]{Definition}
\newtheorem{convention}[theorem]{Convention}
\numberwithin{equation}{section}
\newcommand{\cA}{\mathcal A}
\newcommand{\cI}{\mathcal I}
\newcommand{\cT}{\mathcal T}
\newcommand{\cU}{\mathcal U}
\newcommand{\cV}{\mathcal V}
\newcommand{\cW}{\mathcal W}
\newcommand{\cY}{\mathcal Y}
\newcommand{\al}{\alpha}
\newcommand{\be}{\beta}
\newcommand{\ga}{\gamma}
\newcommand{\Ga}{\Gamma}
\newcommand{\ep}{\epsilon}
\newcommand{\la}{\lambda}
\newcommand{\Si}{\Sigma}
\newcommand{\Om}{\Omega}
\newcommand{\RR}{\mathbb R}
\newcommand{\CC}{\mathbb C}
\newcommand{\rar}{\rightarrow}
\newcommand{\tr}{\operatorname{tr}}
\newcommand{\dive}{\operatorname{div}}
\newcommand{\norm}[1]{\left\Vert#1\right\Vert}
\title[Conformal Fluids]{On the existence of solutions and causality for
relativistic viscous conformal fluids}
\author[Disconzi]{Marcelo M. Disconzi}
\address{Department of Mathematics\\
Vanderbilt University\\ Nashville, TN, USA}
\email{marcelo.disconzi@vanderbilt.edu}
\thanks{Marcelo M. Disconzi is partially supported by NSF grant \# DMS-1812826,
by a Sloan Research Fellowship provided by the Alfred P. Sloan foundation,
and by a Discovery grant administered by Vanderbilt University.}
\begin{document}

\maketitle

\begin{abstract}
We consider a stress-energy tensor describing a pure radiation viscous fluid with conformal symmetry
introduced in \cite{BemficaDisconziNoronha}.
We show that the corresponding equations of motions are causal
in Minkowski background and also when coupled to Einstein's equations,
and solve the associated initial-value problem.
\end{abstract}

%\newpage

\tableofcontents

\section{Introduction\label{section_intro}}

Consider the following stress-energy tensor for a relativistic fluid with viscosity:
\begin{align}
\begin{split}
T_{\al \be} & = \frac{4}{3}u_\al u_\be  \ep  + \frac{1}{3} g_{\al\be} \ep
-\eta \pi_\al^\mu \pi_\be^\nu (\nabla_\mu u_\nu + \nabla_\nu u_\mu 
-\frac{2}{3} g_{\mu\nu} \nabla_\la u^\la )
\\
&
+\la (u_\al u^\mu \nabla_\mu u_\be + u_\be u^\mu \nabla_\mu u_\al) 
+ \frac{1}{3} \chi \pi_{\al\be} \nabla_\mu u^\mu + \chi u_\al u_\be \nabla_\mu u^\mu 
\\
& + \frac{\la}{4 \ep }(u_\al \pi^\mu_\be \nabla_\mu \ep + u_\be \pi^\mu_\al \nabla_\mu \ep) 
+ \frac{3 \chi}{4 \ep} u_\al u_\be u^\mu\nabla_\mu \ep + \frac{\chi}{4 \ep}
\pi_{\al\be} u^\mu \nabla_\mu \ep.
\end{split}
\label{conformal_tensor}
\end{align}
Here, $u$ is the four-velocity of fluid particles, normalized so that
\begin{gather}
u^\al u_\al =-1,
\label{normalization_u}
\end{gather}
$\ep$ is the energy density of the fluid, $g$ is a (Lorentzian) metric, $\nabla$ is the Levi-Civita
connection associated with $g$, $\pi_{\al\be} = g_{\al\be} + u_\al u_\be$,
and $\eta$, $\lambda$, and $\chi$ are viscous transport
coefficients --- so that $\eta=\lambda=\chi = 0$ corresponds to an ideal fluid. The 
transport coefficients are non-negative functions of $\epsilon$. Coefficient $\eta$
is the usual coefficient of shear viscosity, whereas $\lambda$ and $\chi$ are related to
relaxation times.  More precisely, while $\lambda$ and $\chi$, differently
than $\eta$, have no analogue in more familiar theories such as classical, non-relativistic
 Navier-Stokes,
their physical meaning can be understood from the derivation of (\ref{conformal_tensor})
from kinetic theory given in \cite{BemficaDisconziNoronha}. In that case, one may interpret $\lambda/(s \theta)$ and $\chi/(s \theta)$, where $s$ is the entropy density and $\theta$ the temperature, as relaxation times that restore causality (since intuitively causality says that the system
needs some time to relax back to equilibrium after a perturbation). See \cite{BemficaDisconziNoronha}
for details.

We are interested in the case of  pure radiation, when the fluid's pressure is given
by $p = \frac{1}{3}\ep$, and, therefore, $p$ has already been eliminated from $T_{\al\be}$.

Above and throughout, we adopt the following:
\begin{convention}
We work in units where $8\pi G = c = 1$, where $G$ is Newton's constant and $c$ is the speed
of light in vacuum. Our signature for the metric is $-+++$. Greek indices run from
$0$ to $3$ and Latin indices from $1$ to $3$.
\label{conventions}
\end{convention}

We shall couple (\ref{conformal_tensor}) to Einstein's equations:
\begin{gather}
R_{\al\be} -\frac{1}{2} R g_{\al\be} + \Lambda g_{\al\be} = T_{\al\be},
\label{EE}
\end{gather}
where $R_{\al\be}$ and $R$ are, respectively, the Ricci and scalar curvature of the metric $g$,
and $\Lambda$ is a constant (the cosmological constant).
We recall that in light of the Bianchi identities, a necessary condition for (\ref{EE}) to hold is that
\begin{gather}
\nabla_\al T^\al_\be = 0.
\label{div_T}
\end{gather}
Naturally, equations (\ref{EE})-(\ref{div_T}) are defined in a four-dimensional
differentiable manifold, the space-time.

We shall establish the following.

\vskip 0.2cm
\textbf{Main Result.} 
\emph{(see Theorems \ref{main_theorem} and \ref{Minkowski_theorem} for precise statements)
Under appropriate conditions on the initial data and the transport coefficients, 
the system of Einstein's equations coupled to (\ref{conformal_tensor})
is causal and admits a unique solution. Causality and uniqueness are here understood
in the usual sense of general relativity. Existence, uniqueness, and causality remain true if
we consider solely
(\ref{div_T}) in Minkowski space-time.
}
\vskip 0.2cm

The tensor (\ref{conformal_tensor}) was introduced\footnote{In \cite{BemficaDisconziNoronha}, 
(\ref{conformal_tensor}) is written in a different form, using the so-called
Weyl derivative (whose definition is given in \cite{BemficaDisconziNoronha};
see \cite{Loganayagam:2008is} for more details) instead
of the covariant derivative. Both expressions agree once the Weyl derivative is expanded in terms
of the covariant derivative.} in \cite{BemficaDisconziNoronha}. As discussed there,  (\ref{conformal_tensor}) is the first example in the literature 
of a stress-energy tensor for relativistic viscous fluids satisfying the following list
of physical requirements: in Minkowski background, equations (\ref{div_T})
are (i) linearly stable with respect to perturbations around homogeneous thermodynamic equilibrium, 
(ii) well-posed, and (iii) causal; (iv) Einstein's equations coupled to 
(\ref{conformal_tensor}) are well-posed and causal;
(v) equations (\ref{div_T}) reduce to the standard Navier-Stokes equations
in the non-relativistic limit;
(vi) an out-of-equilibrium entropy can be defined so that solutions to (\ref{div_T}) satisfy the 
(out of equilibrium) second law
of thermodynamics; and 
{(vii) $T_{\al\be}$ can be derived from microscopic kinetic theory. 

One reason for seeking a stress-energy tensor satisfying the above properties is 
that the traditional forms of the relativistic Navier-Stokes equations fail to be causal
and stable \cite{Hiscock_Lindblom_instability_1985, PichonViscous}, and attempts to construct a relativistic viscous theory satisfying (i)-(vi)
have been limited so far\footnote{It is interesting to note that the seemingly easier task of generalizing the 
non-relativistic Navier-Stokes to Riemannian manifolds is not without problems either, see
\cite{CzubakDisconziChan-NavierManifolds}.}. See 
\cite{DisconziViscousFluidsNonlinearity, Disconzi_Kephart_Scherrer_2015, DisconziKephartScherrerNew,RezzollaZanottiBookRelHydro} for a discussion. 
In \cite{BemficaDisconziNoronha} 
it is also shown that $T_{\al\be}$ yields a well-defined temperature 
in the test-case of the Gubser flow, in contrast to the traditional 
relativistic Navier-Stokes' equations that yield a negative temperature, 
and that a hydrodynamic attractor exists for the dynamics of the Bjorken flow.

Tensor (\ref{conformal_tensor}) describes a conformal fluid.
Loosely speaking, this means that (\ref{conformal_tensor}) is well-behaved under conformal changes
of the metric.
 More precisely, consider a conformal transformation
$g_{\al\be}^\prime = e^{-2\phi} g_{\al\be}$, and the transformed
quantities $u^\prime_\al = e^{-\phi} u_\al$, $\ep^\prime = e^{4\phi} \ep$.
Then the fluid is called conformal if $T_{\al\be}$ is traceless and the corresponding transformed $T^\prime_{\al\be}$ satisfies
\begin{gather}
T^\prime_{\al\be} = e^{2\phi} T_{\al\be}.
\nonumber
\end{gather}
One can show \cite{Baier:2007ix, Bhattacharyya:2008jc} that under these conditions 
\begin{gather}
\nabla^\prime_\al (T^\prime)^\al_\be = e^{4\phi} \nabla_\al T^\al_\be,
\nonumber
\end{gather}
so in particular solutions are preserved by the above transformations.
There exists a large literature on conformal fluids and their applications in physics,
to which the reader is referred for a discussion  (see, e.g., \cite{KodamaHydroApproaches, NoronhaExactAnalytical} and references therein;
for the mathematical background for these references, see \cite{HallWeylManifolds}).
We restrict ourselves to mentioning
that conformal fluids are of importance in the study of the quark-gluon plasma that
forms in high-energy collisions of heavy-ions;  the quark-gluon plasma at very high
temperatures is the prototypical example of a relativistic viscous fluid
with an equation of state of pure radiation.

The definition of conformal fluid, stated above, will play no direct role in this work
per se. 
Rather, we shall use one of its main consequences, namely, that  
for such fluids we have 
\begin{gather}
\chi = a_1 \eta, \lambda = a_2 \eta,
\label{proportional_eta}
\end{gather}
where $a_1$ and $a_2$ are constants. Therefore all transport coefficients are determined once we are given
 $\eta = \eta(\ep)$.

Our main result has previously appeared in \cite{BemficaDisconziNoronha}, 
but the letter format of that manuscript
and the fact that
it was addressed primarily to a physical audience prevented us from presenting several details
of the proof. In particular, the argument in \cite{BemficaDisconziNoronha} may not be entirely satisfactory  for a mathematical 
audience. 

\begin{definition}
For the rest of the the paper, we shall refer to the system of equations (\ref{EE}), with $T_{\al\be}$
given by (\ref{conformal_tensor}) and $u$ satisfying (\ref{normalization_u}), as the
viscous Einstein-conformal fluid (VECF) system. 
\end{definition}

\section{Statement of the results}
We now turn to the precise formulation of the Main Result. We begin by discussing the initial data for 
the VECF system.

\begin{definition}
An initial data set for the VECF system consists of 
a three-dimensional smooth manifold $\Si$,
a Riemannian metric $g_0$  on $\Si$,
a symmetric two-tensor $\kappa$  on $\Si$,
two real-valued functions $\ep_0$ and $\ep_1$ defined on $\Si$, and 
two vector fields $v_0$  and $v_1$ on $\Si$,
such that the Einstein constraint equations are satisfied.
\label{definition_initial_data}
\end{definition}

We recall that the constraint equations are given by the following system
of equations on $\Si$:
\begin{align}
\begin{split}
R_{g_0} - |\kappa|^2_{g_0} - (\tr_{g_0} \kappa)^2 &= 2\rho
\\
\nabla_{g_0} \tr_{g_0} \kappa - \dive_{g_0} \kappa & = j 
\end{split}
\nonumber
\end{align}
where $R_{g_0}$ is the scalar curvature of $g_0$, $\nabla_{g_0}$, $\tr_{g_0}$, $\dive_{g_0}$,
and $| \cdot |_{g_0}$ are the covariant derivative, trace, divergence, and norm with 
respect to $g_0$. The quantities $\rho$ and $j$ are given by 
$\rho = T(n,n)$ and $j = T(n,\cdot)$, where $n$ is the future-pointing unit normal to $\Si$
inside a development of the initial data and $T$ is the stress-energy tensor.

Because $T_{\al\be}$ involves first derivatives of $u$ and $\epsilon$, initial conditions for their time derivatives
have to be given, hence the necessity of two functions and two vector fields. Even though $u$ is a four-vector, it suffices
to specify vector fields on $\Si$, with initial conditions for the non-tangential components of $u$ derived from (\ref{normalization_u})
(see section \ref{section_initial_data}). It is well-known that initial data for Einstein's equations cannot be prescribed
arbitrarily, having to satisfy the associated constraint equations, see, e.g., \cite{HawkingEllisBook}, 
for details.

We can now state our main result. The definition of spaces $G^{s}$ and $G^{m,s}$
is recalled in Appendix \ref{section_appendix_Gevrey}.
We refer the reader to the general relativity literature (e.g.,
\cite{ChoquetBruhatGRBook,HawkingEllisBook, KlainermanNicoloBook, RingstromCauchyBook,WaldBookGR1984})
for the terminology employed in Theorem \ref{main_theorem}. 

\begin{theorem}
Let $\cI = (\Si, g_0, \kappa, \ep_0, \ep_1, v_0, v_1)$ be an initial data set for the VECF system. Assume that $\Si$ is compact with no boundary,
and that $\ep_0 > 0$. Suppose that $\chi$ and $\lambda$ are given by (\ref{proportional_eta}),
where $\eta: (0,\infty) \rar (0,\infty)$ is analytic, and assume that 
 $a_1 = 4$ and $a_2 \geq 4$. Finally, assume that 
the initial data is in $G^{(s)}(\Si)$ for some $1 < s < \frac{17}{16}$.
Then: 

1) There exists a globally hyperbolic development $M$ 
of $\cI$. 

2) $M$ is causal, in the following sense. 
Let $(g,\ep,u)$ be a solution to the VECF system provided by the 
globally hyperbolic development $M$.
For any $p \in M$ in the future of $\Si$, $(g(p), u(p), \epsilon(p))$ depends only 
on $\left. \cI \right|_{i(\Si) \cap J^-(p)}$, where
$J^{-}(p)$ is the causal past of $p$ and
 $i: \Si \rar M$ is the embedding associated with the globally hyperbolic development $M$.

%3) There exist a $\cT>0$ and a diffeomorphism $\psi: M \rar [0,\cT)\times \Si$ such
%that, under the identification of $M$ with $[0,\cT)\times \Si$ given by $\psi$,
%we have that  $\left. g \right|_{\Si_t}$, $\left. u \right|_{\Si_t}$, and $\left. \ep \right|_{\Si_t}$
%belong to $G^{(s)}(\Si_t)$,
%and $\Si_t = \{t\} \times \Si$, $0 \leq t < \cT$.
\label{main_theorem}
\end{theorem}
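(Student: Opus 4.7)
The plan is to reduce the VECF system to a quasilinear second order system in a suitable gauge, analyze its principal symbol to verify weak hyperbolicity together with causality of the characteristic cones, and then invoke a local existence theorem for Gevrey class data. The fact that Gevrey regularity is required (with the upper bound $s < 17/16$) already suggests that the reduced system is only weakly hyperbolic rather than strictly hyperbolic, so that the standard Sobolev based quasilinear theory is insufficient and one should instead appeal to results of Leray--Ohya type for nonstrictly hyperbolic systems.

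The first step is to rewrite Einstein's equations (\ref{EE}) in harmonic/wave gauge, obtaining a quasilinear wave equation for $g_{\al\be}$ with a right hand side that involves at most second derivatives of $(g,u,\ep)$ coming from $T_{\al\be}$. The fluid part is obtained from $\nabla_\al T^\al_\be = 0$; since $T_{\al\be}$ already contains first derivatives of $u$ and $\ep$, this divergence is a second order quasilinear system in $(u,\ep)$. Differentiating the normalization $u^\al u_\al=-1$ gives one algebraic relation, which I would use to treat the system as a constrained second order system whose constraint, together with the harmonic gauge condition, propagates in the standard way from the contracted Bianchi identity and from the Einstein constraint equations imposed on $\cI$, as in the classical Choquet--Bruhat argument.

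The second step is the principal symbol analysis. For each real covector $\xi$ I would compute the determinant of the principal symbol $P(g,u,\ep,\xi)$ of the full reduced system and expect it to factor as a power of the wave operator $g^{\al\be}\xi_\al\xi_\be$, corresponding to the gauge fixed Einstein part, multiplied by a fluid characteristic polynomial $Q(\xi)$ whose coefficients depend on $\eta$ and on the ratios $a_1,a_2$ appearing in (\ref{proportional_eta}). The key claim, and the main source of difficulty, is that the specific choice $a_1=4$, $a_2\geq 4$ is precisely what forces every real root of $Q(\xi)=0$ to lie inside the closed causal cone of $g$, so that no viscous mode propagates faster than light; this simultaneously gives the causality statement in part (2) and shows that $P(\xi)$ has only real characteristic roots, necessarily with high multiplicity. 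Verifying this factorization and root location, while keeping track of the algebraic conditions on $a_1,a_2$, is the main technical obstacle.

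The third step is to invoke Leray--Ohya type local existence: for a system whose principal part has real characteristics of maximal multiplicity $m$, well posedness holds in the Gevrey class $G^{(s)}$ for $1<s<m/(m-1)$, and the bound $s<17/16$ in the theorem is the manifestation of this index for the present system. Applying this to the gauge reduced equations with the Gevrey data induced by $\cI$ (as set up in Section \ref{section_initial_data}) produces a local Gevrey solution $(g,u,\ep)$, which after propagating the gauge and the constraints yields a globally hyperbolic development $M$, proving part (1). Finally, the finite speed of propagation built into the Leray--Ohya framework, combined with the causality of the characteristic cones proved in step two, yields the domain of dependence conclusion asserted in part (2).
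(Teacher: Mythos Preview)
Your proposal is essentially correct and follows the same route as the paper: reduce to wave gauge, treat the combined Einstein--fluid system as a Leray system, compute and factor the characteristic determinant, verify that under $a_1=4$, $a_2\geq 4$ all factors are hyperbolic with characteristic cones contained in the light cone, and apply the Leray--Ohya theory (via Choquet--Bruhat's diagonalization) to obtain local existence and finite propagation in $G^{(s)}$; then propagate the wave gauge and constraints as usual and glue local developments.

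A few points where the paper is more specific than your sketch, and where you would want to sharpen the argument. First, the normalization $u^\al u_\al=-1$ is not used as an algebraic side constraint: the paper applies $u^\al u^\mu\nabla_\al\nabla_\mu$ to it and includes the resulting second order equation as the fifth row of the matter block, so that the matter system is a genuine $5\times 5$ second order Leray system in $(u^\be,\ep)$; propagation of the normalization is then recovered a posteriori by solving a scalar equation for $u^\la u_\la$. Second, the bound $s<17/16$ is not tied to a maximal root multiplicity but to the number $Q$ of hyperbolic factors in the characteristic determinant, the Leray--Ohya index being $Q/(Q-1)$: here the determinant factors as $(\widehat u^\mu\xi_\mu)^4$ (four linear factors), the square of a quadratic (two factors), one further quadratic, and $(\xi^\mu\xi_\mu)^{10}$ (ten factors), giving $Q=17$. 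Third, to make the localization and gluing work in the Gevrey class the paper extends the data from a coordinate patch to all of $\RR^3$ using $G^{(s)}$ cutoffs (this is where $s>1$ enters), solves the system in $\RR^4$, and passes from the diagonalized system back to the original one by an analytic approximation argument; your outline should eventually include these steps.
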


We note that, in the standard PDE language, Theorem \ref{main_theorem} is local in time.
But as usual in general relativity, solutions to Einstein's equations are geometric 
(a solution to Einstein's equations is a Lorentzian manifold) 
and,
in particular, coordinate independent, whereas a statement like ``there exists a $\cT >0$..."
(as in usual local in time results) requires the introduction of coordinates. This is
why the theorem is better stated as the existence of a globally hyperbolic 
development\footnote{We recall that a globally hyperbolic development is, roughly speaking, 
a Lorentzian manifold where Einstein's equations are satisfied and in which $\Si$ embeds
isometrically as a Cauchy surface taking the correct data. We also recall
that once a globally hyperbolic development is shown to exist, one can prove the 
existence of the ``largest" possible global hyperbolic development, i.e., the
maximal globally hyperbolic development of the initial data, which is 
(geometrically) unique. See \cite{KlainermanNicoloBook, RingstromCauchyBook} for details.}.
We assumed that $\Si$ is compact for simplicity, otherwise asymptotic conditions would have to be prescribed. 
The type of asymptotic conditions one would impose had $\Si$ been non-compact depends on the
type of questions one is investigating. For instance, it is customary to require $g_0$ to 
be asymptotically flat, but other conditions, such as asymptotically hyperbolic, are often
used. As for the matter variables, several choices are possible. One can require $v_0$ and $\ep_0$
to approach zero, a constant, or some other specified profile at infinity. The literature
on Einstein's equations with non-compact $\Si$ is vast, and a discussion
of asymptotic conditions can be found, e.g., 
\cite{ChoquetBruhatGRBook, ChruscielManifoldStructureConstraints}
and references therein.
The assumption $\ep_0 > 0$  in Theorem \ref{main_theorem} (which implies
a uniform bound from below away from zero by the compactness of $\Si$)}, 
however, is crucial. This is 
apparent from expression (\ref{conformal_tensor}), but it is
worth mentioning that allowing $\epsilon_0$ to vanish leads to severe technical difficulties even 
in the better studied case of the 
Einstein-Euler system (see
\cite{HadzicShkollerSpeck,JangLeFlochMasmoudi,RendallFluidBodies} for 
the known results and \cite{DisconziRemarksEinsteinEuler} for a discussion;
 in fact,
the difficulties with vanishing density are present already in the non-relativistic case, 
see the discussion in 
\cite{DisconziEbinFreeBoundary3d, Lindblad-FreeBoundaryCompressbile}).
In particular, if we were dealing with a non-compact $\Si$ and had chosen an asymptotic condition
where $\ep_0$ approaches zero, the techniques here employed would not directly apply.
The assumptions $a_1 = 4$
and $a_2 \geq 4$ are technical\footnote{
Other values of $a_1$ and $a_2$ are in fact possible as showed in 
\cite{BemficaDisconziNoronha}, and the proof for these other cases is essentially the same
as showed here. The main difference is how one factors the characteristic determinant.
This different factorization is carried out in \cite{BemficaDisconziNoronha}.
See Remark \ref{remark_factorization}.}, but they are consistent with conditions that guarantee the previously mentioned 
linear stability of (\ref{conformal_tensor}). Note that while our proof is restricted
to the Gevrey class, our result guarantees that 
causality will be automatically satisfied in any function space
where uniqueness can be established. This is relevant in view of the difficulties
of constructing causal theories of relativistic viscous fluids.

Next, we consider the case of a Minkowski background.

\begin{theorem}
Let $T$ be given by 
(\ref{conformal_tensor}) with $g$ being the Minkowski metric.
Suppose that $\chi$ and $\lambda$ satisfy (\ref{proportional_eta}),
with $a_1 = 4$, $a_2 \geq 4$, where $\eta: (0,\infty) \rar (0,\infty)$ is a given analytic function.
Let $\ep_0, \ep_1: \RR^3 \rar \RR$ and $v_0, v_2: \RR^3 \rar \RR^3$ belong to $G^{(s)}(\RR^3)$ for some $1 \leq s < \frac{7}{6}$, and assume
that $\ep_0 \geq C_0 > 0$, where $C_0$ is a constant.

Then, there exists a $\cT>0$, a function $\ep: [0,\cT)\times \RR^3 \rar (0, \infty)$, and a vector field $u: [0,\cT) \times \RR^3 \rar \RR^4$, 
such that $(\ep,u)$ satisfies equations (\ref{normalization_u}) and (\ref{div_T}) in $[0,\cT) \times \RR^3$, $\ep(0,\cdot) = \ep_0$, 
$\partial_0 \ep(0,\cdot) = \ep_1$, $u(0,\cdot) = u_0$, and $\partial_0 u(0,\cdot) = u_1$, where $\partial_0$ is the derivative with
respect to the first coordinate in $[0,\cT)\times \RR^3$. 
This solution belongs to $G^{2,(s)}([0,\cT)\times \RR^3)$ and is unique in this class. 
Finally, the solution is causal, in the following sense. For any $p \in [0,T) \times \RR^3$,
$(\ep(p), u(p))$ depends only on $\left. (\ep_0, \ep_1, v_0, v_1) \right|_{\{ x^0 = 0\}\cap 
J^{-}(p)}$, where $J^-(p)$ is the causal past of $p$ (with respect to the Minkowski metric).
\label{Minkowski_theorem}
\end{theorem}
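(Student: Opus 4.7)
Since $g$ is fixed to be Minkowski, the problem reduces to establishing well-posedness and causality for the conservation law (\ref{div_T}) alone, viewed as a second-order quasilinear system in $(\ep,u)$ subject to the algebraic constraint (\ref{normalization_u}). The plan is to cast this system in a form amenable to the Leray-Ohya theorem on non-strictly hyperbolic systems in Gevrey classes, and to read causality off the geometry of its characteristic cones.

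The first step is to eliminate the constraint: writing $v = (u^1,u^2,u^3)$, one uses (\ref{normalization_u}) to express the future-directed $u^0$ as a smooth function of $v$, so that (\ref{div_T}) becomes a determined system of four equations in the four unknowns $(\ep, v^1, v^2, v^3)$. Next, I would extract the principal part explicitly. Second derivatives of $(\ep,u)$ enter $\nabla_\al T^\al_\be = 0$ through the divergence of the viscous stress $\pi^\mu_\al \pi^\nu_\be (\nabla_\mu u_\nu + \nabla_\nu u_\mu - \tfrac{2}{3}g_{\mu\nu}\nabla_\la u^\la)$, the bulk-like pieces $\pi_{\al\be}\nabla_\mu u^\mu$ and $u_\al u_\be \nabla_\mu u^\mu$, and the terms proportional to $\nabla_\mu \ep$. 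Collecting these contributions yields a principal symbol $A^{\mu\nu}(\ep,u)\xi_\mu \xi_\nu$, a $4\times 4$ matrix whose determinant $P(\xi)$ is a polynomial of degree eight in $\xi$.

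The decisive algebraic step is to factor $P(\xi)$ under the hypotheses $a_1 = 4$ and $a_2 \geq 4$. One expects a product of a power of $(u^\al \xi_\al)$, capturing propagation along the flow, with factors describing cones strictly inside the Minkowski light cone, consistent with the conformal sound speed $1/\sqrt{3}$. The precise values $a_1 = 4$, $a_2 \geq 4$ are what make this factorization clean, ensuring that all roots of $P(\xi)$ (viewed as a polynomial in $\xi_0$) are real and causal; moreover, the maximum multiplicity $m$ of a root satisfies $m/(m-1) = 7/6$, i.e.\ $m = 7$, which is exactly the threshold at which the Leray-Ohya theorem (see, e.g., \cite{ChoquetBruhatGRBook}) yields local well-posedness in the Gevrey class for $s < 7/6$. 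Applying this theorem to data in $G^{(s)}(\RR^3)$ produces a $G^{2,(s)}([0,\cT)\times \RR^3)$ solution on some interval $[0,\cT)$, unique in this class; the positivity assumption $\ep_0 \geq C_0 > 0$ is used to keep the $1/\ep$ factors in (\ref{conformal_tensor}) smooth throughout.

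Causality then follows from the standard finite-speed-of-propagation statement attached to Leray-Ohya hyperbolicity: since every characteristic cone of $P$ lies inside the Minkowski light cone, the domain of dependence of a point $p$ is contained in $J^{-}(p)\cap \{x^0 = 0\}$, which is exactly the claim. The main obstacle, and the place where the hypotheses $a_1 = 4$, $a_2 \geq 4$ enter essentially, is the explicit computation and factorization of $P(\xi)$ together with the multiplicity count that matches the Gevrey exponent $7/6$; this is a substantial algebraic task, and different values of $a_1, a_2$ would require a different factorization, as indicated in the footnote to Theorem \ref{main_theorem}.
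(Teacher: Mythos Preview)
Your overall architecture---write the conservation law as a quasilinear second-order system, factor the characteristic determinant, and invoke Leray--Ohya---is exactly the paper's strategy.  However, two specific points diverge from the paper and the second is a genuine gap.

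First, the paper does \emph{not} eliminate $u^0$ via the constraint.  It keeps $u$ as a four-vector, adjoins the twice-differentiated normalization condition $u_\la u^\al u^\mu \partial^2_{\al\mu} u^\la + \cdots = 0$ as a fifth equation, and works with the resulting $5\times 5$ system $m(U,\partial)U=\mathfrak q(U)$ in the unknowns $(u^0,u^1,u^2,u^3,\ep)$.  Propagation of the constraint $u^\la u_\la=-1$ is then proved a posteriori (Lemma~\ref{lemma_normalization_u}).  Your substitution $u^0=\sqrt{1+|v|^2}$ is a legitimate alternative, but it produces a different principal symbol and a different determinant from the one the paper analyzes.

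Second---and this is the gap---your Gevrey-index count does not hold up.  In the Leray--Ohya framework used here the index is $Q/(Q-1)$, where $Q$ is the number of \emph{hyperbolic polynomial factors} into which the characteristic determinant splits; it is not governed by ``maximum root multiplicity.''  For the paper's $5\times 5$ system the determinant has degree $10$ and factors as $p_1 p_2 p_3$ with $p_1=c(\widehat u^\mu\xi_\mu)^4$ (four linear factors), $p_2=[\widetilde p_2(\xi)]^2$ (two quadratic factors), and $p_3$ (one quadratic factor), giving $Q=7$ and index $7/6$.  Your $4\times 4$ reduction yields a degree-$8$ determinant, and there is no reason to expect seven hyperbolic factors there; one would rather anticipate losing two of the $(u^\mu\xi_\mu)$ factors coming from the constraint row, which would change the index.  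So either carry out your $4\times 4$ factorization explicitly and report the actual $Q$, or---simpler---follow the paper and keep the $5\times 5$ system, where the factorization $p_1p_2p_3$ and the count $Q=7$ are already done.
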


While formally Theorem \ref{Minkowski_theorem} can not be derived as a corollary
of Theorem \ref{main_theorem}, its validity should come as no surprise once we know the latter 
to be true. In fact, the proof of Theorem \ref{Minkowski_theorem} will be essentially
contained in that of Theorem \ref{main_theorem}, as we shall see. It is nonetheless
useful to state Theorem \ref{Minkowski_theorem} given the importance of 
viscous fluids in Minkowski background for applications.

\begin{remark}
The difference between $s >1$ in Theorem \ref{main_theorem} and $s\geq 1$ in Theorem
\ref{Minkowski_theorem} comes from the fact that in the proof of Theorem \ref{main_theorem}
we work in local coordinates and employ bump functions, which cannot be analytic (case
$s=1$). In Minkowski space, however, we can use global coordinates and analyticity is not
prevented. 
\end{remark}

\section{Proof of Theorem \ref{main_theorem}}

In this section we prove Theorem \ref{main_theorem}, thus we henceforth assume its hypotheses.
We will always denote by $s$ a number in $(1,\frac{17}{16})$, as in the statement of the theorem.
The proof will be split in several parts. Some of the arguments parallel well-known
constructions in general relativity in the smooth setting, but we present them because 
some additional steps are required in the Gevrey class.

\subsection{The equations of motion} Here we write the VECF in coordinates and in a more
explicit form. At this point, we are only interested in writing the equations in a suitable form, thus we assume the validity of (\ref{normalization_u}) and (\ref{EE}) (and consequently (\ref{div_T})),
and derive relations of interest.

As is customary, we shall write (\ref{EE}) in trace-reversed
form and in wave coordinates. More precisely, we consider the reduced Einstein equations given by
\begin{gather}
g^{\mu\nu} \partial^2_{\mu \nu} g_{\al\be} 
= B_{\al\be}(\partial \ep, \partial u, \partial g),
\label{EE_gauge}
\end{gather}
where above and henceforth we adopt the following:

\begin{notation}
We shall employ the letters $B$ and $\widetilde{B}$, 
with indices attached when appropriate, to denote
a general expression depending on at most the number of derivatives indicated
in its argument. For instance, in (\ref{EE_gauge}), $B_{\al\be}$ represents an expression
depending on at most first derivatives of $\ep$, first derivatives of $u$, and first
derivatives of $g$. As another example, $\widetilde{B}(\epsilon, \partial u, \partial^2 g)$
denotes an expression depending on at most zero derivatives of $\epsilon$, 
one derivative of $u$, and two derivatives of $g$. $B$ and $\widetilde{B}$ can vary from expression
to expression. It can be easily verified that $B$ and $\widetilde{B}$ will always be an analytic function
(typically involving only products and quotients) of its arguments.
\label{notation_B}
\end{notation}

Equations (\ref{div_T}) become\footnote{See Appendix \ref{appendix_derivation} for a derivation of 
(\ref{EE_gauge}) and (\ref{div_T_gauge}). }
\begin{align}
\begin{split}
(-\eta  & g^{\al\mu} +(\la-\eta) u^\al u^\mu )\partial^2_{\al \mu}   u^\be 
+ (\la + \chi)u^\be u^\mu \partial^2_{\mu\al} u^\al 
+ \frac{1}{3}(-\eta + \chi) g^{\be \mu} \partial^2_{\mu\al} u^\al 
\\
& + \frac{1}{3}(-\eta + \chi) u^\be u^\mu \partial^2_{\mu \al} u^\al 
+ \frac{1}{4 \ep}u^\be (\la  g^{\al \mu} + (\la + 3\chi  ) u^\al u^\mu )\partial^2_{\al\mu} \ep 
\\
&
+ \frac{1}{4\ep} (\la + \chi) u^\al g^{\be \mu} \partial^2_{\al\mu} \ep 
+ \frac{1}{4\ep} (\la + \chi) u^\be u^\al u^\mu \partial^2_{\al \mu} \ep 
+ \widetilde{B}^\be(\partial u, g) \partial^2 g
\\
&
 = B^\be(\partial \ep, \partial u, \partial g).
\end{split}
\label{div_T_gauge}
\end{align}
The term $\widetilde{B}^\be(\partial u, g) \partial^2 g$, which is linear
in $\partial^2 g$, comes from derivatives of the Christoffel
symbols, after expanding the second covariant derivatives of $u$. This term
is of the form $\widetilde{B}^\be(\partial u, g, \partial^2 g) $ according to Notation
\ref{notation_B}, but we wrote it as $\widetilde{B}^\be(\partial u, g) \partial^2 g$ to emphasize
that we shall consider it as a second order quasi-linear operator on $g$. The particular form of this
operator will not be needed, but it is important that it be included in the principal
part of the system for the derivative counting employed below.

Applying $u^\al u^\mu \nabla_\al \nabla_\mu$ to (\ref{normalization_u}) produces
\begin{gather}
u_\la u^\al u^\mu \partial^2_{\al \mu} u^\la + \widetilde{B}(\partial u, g) \partial^2 g= B(\partial u,\partial g).
\label{normalization_u_gauge}
\end{gather}

We introduce the vector 
\begin{gather}
U = (u^\be, \ep, g_{\al\be}),
\nonumber
\end{gather}
where we adopt the 
obvious notation with $u^\be$ denoting $(u^0,u^1,u^2,u^3)$, etc.; such 
a notation is used throughout, including in the matrices below. We write equations 
(\ref{EE_gauge}), (\ref{div_T_gauge}), and (\ref{normalization_u_gauge}) in matrix form
as
\begin{gather}
\mathfrak{M}(U,\partial) U = \mathfrak{q}(U),
\label{main_system}
\end{gather}
where
\begin{gather}
\mathfrak{M}(U,\partial) =
\left(
\begin{matrix}
m(U,\partial) & b(U,\partial) \\
0 & g^{\mu \nu} \partial^2_{\mu\nu}
\end{matrix}
\right)
\label{matrix_M}
\end{gather}
with
\begin{align}
\begin{split}
m_{00}(U,\partial) = &
(-\eta g^{\al\mu} +(\la -\eta) u^\al u^\mu )\partial^2_{\al\mu}
+  (\la + \chi) u^0 u^\al \partial^2_{0\al } 
\\
&+ \frac{1}{3}(-\eta+\chi)(g^{0\al}
+u^0 u^\al  ) \partial^2_{0 \al},
\end{split}
\nonumber
\end{align}
\begin{align}
\begin{split}
m_{0i}(U,\partial) = &
(\la + \chi) u^0 u^\al \partial^2_{\al i} 
+ \frac{1}{3}(-\eta+\chi) (g^{0\al} + u^0 u^\al) \partial^2_{\al i},
\end{split}
\nonumber
\end{align}
\begin{align}
\begin{split}
m_{i\nu}(U,\partial) = & u^i (\la + \chi) u^\al \partial^2_{\al \nu} 
+ \frac{1}{3}(-\eta + \chi)(g^{i\al} + u^i u^\al )\partial^2_{\al \nu} , \, \nu \neq i,
\end{split}
\nonumber
\end{align}
\begin{align}
\begin{split}
m_{ii}(U,\partial) = &( -\eta g^{\al\mu} + (\la - \eta)u^\al u^\mu )\partial^2_{\al\mu} 
+
u^i (\la + \chi) u^\al \partial^2_{\al i} 
\\
&
+ \frac{1}{3}(-\eta + \chi)(g^{i\al} + u^i u^\al )\partial^2_{\al i},
\\
& \text{ with no sum over } i,
\end{split}
\nonumber
\end{align}
\begin{align}
\begin{split}
m_{\nu4}(U,\partial) = & \frac{1}{4\ep}u^\nu( \la g^{\al\mu} + (\la + 3\chi) u^\al u^\mu )\partial^2_{\al\mu} 
+ \frac{1}{4\ep}(\la +\chi) (u^\al g^{\nu\mu} + u^\nu u^\al u^\mu) \partial^2_{\al \mu}, 
\end{split}
\nonumber
\end{align}
\begin{align}
\begin{split}
m_{4\nu}(U,\partial) & = u_\nu u^\al u^\mu \partial^2_{\al\mu}.
\end{split}
\nonumber
\end{align}
(Recall Convention \ref{conventions}: above we have $1 \leq i \leq 3$.) 
The matrix $b(U,\partial)$ in (\ref{matrix_M}) corresponds to the matrix with the operators 
$\widetilde{B}^\be(\partial u, g) \partial^2 $ and  
$\widetilde{B}(\partial u, g) \partial^2 $  that act on $g$ 
 (see (\ref{div_T_gauge}) and (\ref{normalization_u_gauge})), 
whose explicit form will not be important here.
Finally, $g^{\mu\nu}\partial^2_{\mu\nu}$ in (\ref{matrix_M})
represents the $10 \times 10$ identity
matrix times the operator $g^{\mu\nu}\partial^2_{\mu\nu}$. The vector $\mathfrak{q}(U)$
corresponds to the right-hand side of equations (\ref{EE_gauge}), (\ref{div_T_gauge}), and (\ref{normalization_u_gauge}), i.e.,
\begin{gather}
\mathfrak{q}(U) = (B^\be(\partial \ep, \partial u, \partial g), B(\partial u, g), B_{\al\be}(\partial \ep, \partial u, \partial g) ).
\nonumber
\end{gather}

\subsection{Initial data\label{section_initial_data}}
We now investigate the appropriate initial conditions for (\ref{main_system}). 
We remind the reader that the geometric data in the assumptions of Theorem \ref{main_theorem}
are intrinsic to $\Si$, thus they do not determine full data for the system\footnote{For example,
$g_0$ is a metric on $\Si$ which is a three-manifold; thus, $g_0$ contains only nine (six independent)
components locally, whereas there are sixteen (ten independent) components in the full space-time
metric. Similarly, $\kappa$ does not determine all transversal derivatives of $g$ on $\Si$, and
$v_0$ and $v_1$ determine only the initial three-velocity and its
transversal derivatives, whereas we need the four-velocity $u$ and its tranversal derivatives initially.
These mismatches are, as it is well-known, related to the gauge freedom of Einstein's equations.
See, e.g., \cite{ChoquetBruhatGRBook} for more discussion.}. Hence, we need to complete the given data to a full set of initial data.

Assume
that $\cI$ is given as in the statement of Theorem \ref{main_theorem}. Embed 
$\Si$ into $\RR \times \Sigma$ and consider $p \in \{ 0 \} \times \Si$. We shall
initially obtain a solution in a neighborhood of $p$, hence we prescribe initial data
locally.

Take coordinates $\{ x^\al \}_{\al=0}^3$  in a neighborhood $\cU$ of $p$ 
such that $\{ x^i \}_{i=1}^3$ are coordinates on $\Si$, which we assume to be 
normal coordinates for $g_0$ centered at $p$. We remark that in these coordinates
the initial data will be in $G^{(s)}(\{ x^0 = 0 \}\cap \cU )$. For, by our assumption on $\cI$, there exist local coordinates  $\{ y^i \}_{i=1}^3$ in a 
neighborhood $\cY \subseteq \Si $ of $p$ such that, in these coordinates, the initial data is Gevrey regular. One obtains (short-time)
geodesics starting at $p$ by solving the geodesic equation, which will be an ODE with Gevrey data in the
$\{ y^i \}$  coordinates. Since we can equip Gevrey spaces with a norm, the usual Picard iteration can be applied to solve the
geodesic equation, and hence we obtain solutions that are Gevrey regular and vary within the Gevrey class with the initial data. 
Therefore, the exponential map and, as a consequence, the coordinates $\{ x^i \}$ are Gevrey regular in $\cY$ with respect to the
$\{y^i\}$ coordinates. Expressing the initial data now in $\{ x^i \}$ coordinates, we conclude from standard properties
of composition and products of Gevrey maps (see, e.g., \cite{LionsMagenesVol3}) 
that the initial data is in $G^{(s)}(\{ x^0 = 0 \}\cap \cU)$ in the $\{x^i\}$ coordinates.

We prescribe the following
initial conditions for $g_{\al\be}$ on $\{x^0=0\} \cap \cU$:
\begin{align}
\begin{split}
g_{ij}(0,\cdot) = (g_0)_{ij}, \,
g_{00}(0,\cdot) = -1, \, g_{0i}(0,\cdot) = 0, 
\, 
\partial_0 g_{ij}(0,\cdot) = \kappa_{ij}, 
\nonumber
\end{split}
\end{align}
and $\partial_0 g_{0\al}(0,\cdot)$ is chosen such that $\{x^\al \}$ are wave
coordinates for $g$ at $x^0 = 0$ (which is well-known to always be possible).

For $u^\be$, we prescribe
\begin{align}
\begin{split}
 u^i(0,\cdot) =& \, v_0^i, \,
\, u^0(0,\cdot) = \sqrt{1 + (g_0)_{ij} v_0^i v_0^j}, \,
\partial_0 u^i(0,\cdot) = v_1^i,
\\
\partial_0 u^0(0,\cdot)  = & \, \frac{1}{\sqrt{1 + (g_0)_{ij} v_0^i v_0^j}}
\left( (g_0)_{ij} v_0^j v_1^i 
+ \frac{1}{2} \kappa_{ij} v_0^i v_0^j
+ \frac{1}{2} \partial_0 g_{00}(0,\cdot) (1 + (g_0)_{ij} v_0^i v_0^j) \right.
 \\
 & \left. +\, \partial_0 g_{0i}(0,\cdot) v_0^i \sqrt{1 + (g_0)_{ij} v_0^i v_0^j}  \right).
\end{split}
\nonumber
\end{align}
(Note that the radicands are non-negative because $g_0$ is a Riemannian metric.)
The initial conditions for $u^0$ and $\partial_0 u^0$ have been derived from
(\ref{normalization_u}) and the above initial conditions for $g_{\al\be}$. Finally,
\begin{gather}
\ep(0,\cdot) = \ep_0, \, \partial_0 \ep(0,\cdot) = \ep_1.
\nonumber
\end{gather}

\subsection{Initial conditions for the system in $\RR^4$\label{section_initial_R4}}

Consider the local coordinates introduced in section \ref{section_initial_data}. Via these
coordinates and identifying $p$ with the origin, 
we can regard system (\ref{main_system}) as defined in an open set
$\cU$ of $\RR^4$ containing the origin, with the initial conditions prescribed
on $\{x^0 = 0\} \cap \cU$. Note that we can also take (\ref{main_system}) as
a system of equations on the whole of $\RR^4$, and we therefore do so. We seek 
to extend the initial data to the whole hypersurface $\{ x^0 = 0\}$, thus determining
initial conditions for the system in $\RR^4$.

Let $\cV$ be compactly contained in $\{ x^0 = 0 \} \cap \cU$ and $\cW$ be compactly
contained in $\cV$. Let 
$\varphi: \{ x^0 = 0\} \rar \RR$ be a function in $G^{(s)}(\RR^3)$ such that
$0 \leq \varphi \leq 1$, $\varphi =1$ in $\cW$, and $\varphi = 0$ in the complement 
of $\cV$. Denote by $h$ the Minkowski metric and set, on $\{ x^0 = 0\}$,
\begin{align}
\begin{split}
\mathring{g}_{ij} = \varphi (g_0)_{ij} + (1-\varphi) h_{ij},\
\mathring{g}_{00} = -1, \,
\mathring{g}_{0i} = 0, \,
\partial_0 \mathring{g} = \varphi \kappa_{ij}.
\end{split}
\nonumber
\end{align}
These will be initial conditions for $g_{\al\be}$ (for equations (\ref{main_system}) in $\RR^4$),
with an usual abuse of notation to denote the initial conditions involving
$\partial_0$.
As our coordinates have been chosen
with $\{x^i\}$ normal coordinates for $g_0$ centered at $p$, we have that 
$\mathring{g}_{ij}(0)=h_{ij}$ and the deviations of $\mathring{g}_{ij}$ 
from the Minkowski metric restricted to $\{x^0=0 \} \cap \cU$  are quadratic on the 
coordinates away from the origin. Writing
\begin{gather}
\mathring{g}_{ij} = \varphi (g_0)_{ij} + (1-\varphi) h_{ij}
= h_{ij} + \varphi( (g_0)_{ij} - h_{ij} ),
\nonumber
\end{gather}
we see that, shrinking $\cU$ if necessary and taking into account
our choice for $\mathring{g}_{0\al}$, $\mathring{g}_{\al\be}$ is a perturbation of the Minkowsi metric restricted to $\{ x^0=0 \}$. Therefore, $\mathring{g}_{\al\be}$ defines a Lorentzian
metric.

Next, we introduce 
\begin{gather}
\mathring{u}^i = \varphi v_0^i,\, \partial_0 \mathring{u}^i = \varphi v_1^i,
\nonumber
\end{gather}
with the initial conditions for $\mathring{u}^0$ and $\partial_0 \mathring{u}^0$
obtained by the same formulas as in section (\ref{section_initial_data}), with the appropriate
replacements by $\mathring{u^i}$ and $\mathring{g}$ on the right-hand sides. Finally, set
\begin{gather}
\mathring{\ep} = \varphi \ep_0 + 1-\varphi, \, \partial_0 \mathring{\ep} 
= \varphi \ep_1.
\nonumber
\end{gather}
By the compactness of $\Si$ and the assumption $\ep_0 > 0$, it follows that 
$\ep_0 \geq C$ for some constant $C>0$, thus
\begin{gather}
\mathring{\ep} \geq \min\{ \frac{1}{2} C, \frac{1}{2} \} \geq C^\prime > 0,
\nonumber
\end{gather}
for some constant $C^\prime$.

The initial data for (\ref{main_system}) in $\RR^4$ described in this section will be denoted by 
$\mathring{U}$.

\subsection{Solving the system in $\RR^4$\label{section_solving_R4}}
In this section, we solve system (\ref{main_system}) with the initial conditions
described in section \ref{section_initial_R4} (see Proposition \ref{proposition_existence_R4} below). 
We shall employ the techniques, 
terminology, and notation of Leray-Ohya systems reviewed in the appendix.

\begin{lemma}
Equations (\ref{main_system}) form a Leray system. 
\end{lemma}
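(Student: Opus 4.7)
The plan is to verify (\ref{main_system}) against the formal definition of a Leray system reviewed in the appendix: exhibit Leray indices for the unknowns and for the equations, check the order bounds for every entry of $\mathfrak{M}(U,\partial)$, and analyze the principal symbol's determinant.

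I would assign to each component of the unknown $U = (u^\beta, \epsilon, g_{\alpha\beta})$ the index $m_j = 2$, and to each of the fifteen equations in (\ref{main_system}) the index $n_i = 0$. With these choices, the admissible order of every entry $\mathfrak{M}_{ij}$ is $m_j - n_i = 2$. Inspecting the explicit formulas for $m_{\mu\nu}(U,\partial)$, the block $b(U,\partial)$ (which by construction collects the $\widetilde{B}(\partial u, g)\partial^2$ operators acting on $g$ in (\ref{div_T_gauge}) and (\ref{normalization_u_gauge})), and the $(g,g)$-block consisting of $g^{\mu\nu}\partial^2_{\mu\nu}$ times the identity, one sees that each entry of $\mathfrak{M}$ is a differential operator of order at most two; all first- and zeroth-order remainders have, per Notation \ref{notation_B}, been absorbed into the source $\mathfrak{q}(U)$. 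This verifies the Leray order bound.

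Retaining the order-two terms produces the principal symbol
\begin{equation*}
M(U,\xi) = \begin{pmatrix} m_0(U,\xi) & b_0(U,\xi) \\ 0 & (g^{\mu\nu}\xi_\mu\xi_\nu)\, I \end{pmatrix},
\end{equation*}
whose block upper-triangular form follows from the vanishing lower-left block of (\ref{matrix_M}); hence
\begin{equation*}
\det M(U,\xi) = \det m_0(U,\xi)\,(g^{\mu\nu}\xi_\mu\xi_\nu)^{10}.
\end{equation*}
The factor $(g^{\mu\nu}\xi_\mu\xi_\nu)^{10}$ is the tenth power of the Lorentzian wave symbol, hyperbolic along any covector timelike for $g$.

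The main obstacle is the analysis of $\det m_0(U,\xi)$, the $5 \times 5$ symbol coupling the components of $u$ and $\epsilon$. My strategy is to decompose each $\xi_\alpha$ into the part $-(u^\beta \xi_\beta)\, u_\alpha$ parallel to $u$ and the transverse part $\pi_\alpha{}^{\beta}\xi_\beta$, and to rewrite $m_0$ in the corresponding adapted frame. Using the proportionalities (\ref{proportional_eta}) with the specific values $a_1 = 4$ and $a_2 \geq 4$, together with the positivity of $\epsilon$ and the normalization (\ref{normalization_u}), I expect $\det m_0$ to collapse to a product of explicit low-degree polynomials in $u^\mu\xi_\mu$ and $g^{\mu\nu}\xi_\mu\xi_\nu$, each of which is hyperbolic. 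This purely algebraic but somewhat lengthy computation is the only delicate step; once completed, (\ref{main_system}) satisfies all the requirements of the Leray-system definition recalled in the appendix.
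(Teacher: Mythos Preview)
Your index assignment $m_j = 2$, $n_i = 0$ and the verification that every entry of $\mathfrak{M}(U,\partial)$ has order at most $m_j - n_i = 2$ is exactly what the paper does, and that part is fine. You should, however, state explicitly the second half of the Leray condition: that the \emph{coefficients} of these second-order operators (and the source $\mathfrak{q}$) depend on at most $m_j - n_i - 1 = 1$ derivatives of each unknown. The paper checks this by listing, for each block of equations, the highest derivatives $\partial u,\partial\ep,\partial g$ appearing in the coefficients and right-hand side; you allude to it only implicitly.

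The more substantive issue is that you have misread the definition. Being a \emph{Leray system} (Definition~\ref{definition_Leray_system} for diagonal systems, and its non-diagonal analogue later in the appendix) is purely a statement about the existence of indices $m_I, n_J$ satisfying the order and coefficient-regularity bounds; it involves no hyperbolicity whatsoever. Hyperbolicity enters only in the separate notion of a \emph{Leray--Ohya} system (Definition~\ref{definition_Leray_Ohya_system}). Consequently, your third and fourth paragraphs --- computing $\det M(U,\xi)$, factoring $\det m_0(U,\xi)$, and arguing that the factors are hyperbolic --- are not part of this lemma at all. In the paper that analysis is carried out later, inside the proof of Proposition~\ref{proposition_existence_R4}, where one must verify the hypotheses of Theorems~\ref{Leray_Ohya_diagonal} and~\ref{Leray_Ohya_theorem_existence}. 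The proof of the present lemma is complete once the indices are exhibited and the two order conditions checked; your ``main obstacle'' is not an obstacle here.
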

\begin{proof}
Write $U$ as
$U = (U^1, U^2)$, with the understanding that $U^1 = (u^\be,\ep) = (u^0,u^1,u^2,u^3,\ep)$
and $U^2 = (g_{\al\be})$.
 Assign to (\ref{main_system})
the following indices:
\begin{align}
\begin{array}{cc}
m_1 = 2, & m_2 = 2, \\
n_1 =0, & n_2 = 0,
\end{array}
\nonumber
\end{align}
where $m_1 = m(U^1) \equiv m(u^\be,\ep)$, 
$m_2 = m(U^2) \equiv m(g_{\al\be})$, 
\begin{align}
\begin{split}
n_1  & =  n(\text{equation (\ref{div_T_gauge})})
\\
& =  n(\text{equation (\ref{normalization_u_gauge})})
\\
& \equiv n(\text{equations corresponding to the first five rows of (\ref{main_system})}),
\end{split}
\nonumber
\end{align}
and
\begin{align}
\begin{split}
n_2 & =  n(\text{equation (\ref{EE_gauge})})
\\
& \equiv n(\text{equations corresponding to the last ten rows of (\ref{main_system})}).
\end{split}
\nonumber
\end{align}
It is understood that we have one index $m_I$ for each unknown of the fifteen unknowns 
and one index $n_J$ for each one of the fifteen equations in (\ref{main_system}). For instance,
by $m_1 = m_1(u^\be,\ep)=2$ we mean $m(u^0)=m(u^1)=m(u^2)=m(u^3)=m(\ep)=2$, and so on.

One readily verifies that with this choice of indices, (\ref{main_system}) has the structure
of a Leray system.
Indeed, we list below
for each row $J$ in (\ref{main_system}) or, equivalently, for each
equation in the system (\ref{EE_gauge}), (\ref{div_T_gauge}), and (\ref{normalization_u_gauge}),
the value of $n_J$;
the highest derivatives 
of each unknown entering in the coefficients and on the right-hand side of the equation; and the  difference 
$m_I - n_J$:
\begin{gather}
\text{rows 1-4} \equiv \text{eq. } (\ref{div_T_gauge}): 
n_1 = 0;
\,
\partial u, \partial \ep, \partial g;
\begin{cases}
m(u) - n_1 \equiv m_1 - n_1 = 2,  \\
m(\ep) - n_1 \equiv m_1 - n_1 = 2,  \\
m(g) - n_1 \equiv m_2 - n_1 = 2,  
\end{cases}
\nonumber
\end{gather}
\begin{gather}
\text{row 5} \equiv \text{eq. } (\ref{normalization_u_gauge}): 
n_1 = 0;
\,
\partial u, \partial g;
\begin{cases}
m(u) - n_1 \equiv m_1 - n_1 = 2,  \\
m(\ep) - n_1 \equiv m_1 - n_1 = 2,  \\
m(g) - n_1 \equiv m_2 - n_1 = 2,  
\end{cases}
\nonumber
\end{gather}
and
\begin{gather}
\text{rows 6-15} \equiv \text{eq. } (\ref{EE_gauge}): 
n_2 = 0;
\,
\partial u, \partial \ep, \partial g;
\begin{cases}
m(u) - n_1 \equiv m_1 - n_2 = 2,  \\
m(\ep) - n_1 \equiv m_1 - n_2 = 2,  \\
m(g) - n_1 \equiv m_2 - n_2 = 2.
\end{cases}
\nonumber
\end{gather}
For example, in equations (\ref{div_T_gauge}), for which $n_1 =0$, we have that the left-hand
side consists of differential operators of order $2$ acting on $(u^\be,\ep)$ ($m(u^\be,\ep)-
n_1 = 2$) and differential operators of order $2$ acting 
on $(g_{\al\be})$ ($m(g_{\al \be})- n_1 = 2$),  whose coefficients depend on at most first derivatives of the unknowns 
($\partial u, \partial \ep, \partial g$, i.e., $m(u^\be,\ep)-
n_1 -1$ and $m(g_{\al\be})- n_1-1$); the right-hand side of (\ref{div_T_gauge}), as the 
coefficients of the differential operators, depends
on at most first derivatives of the unknowns.
\end{proof}

\begin{assumption}
We henceforth make explicit use of  (\ref{proportional_eta}),
with $a_1 = 4$ and $a_2 \geq 4$, in accordance with the assumptions
of Theorem \ref{main_theorem}.
\end{assumption}

For the proof of the next proposition, the reader is reminded of
the Definition \ref{definition_A_s} of $\cA^s(\Si,Y)$, which consists 
of the space of functions sufficiently near the Cauchy data.

\begin{proposition}
There exist a $\cT > 0$, a vector field $u: [0,\cT ) \times\RR^3 \rar  \RR^4$, a function $\ep: [0,\cT)\times \RR^3 \rar (0,\infty)$,
and a Lorentzian metric $g$ defined on $[0,\cT) \times \RR^3$, such that $U = (u^\be, \ep, g_{\al\be})$
satisfies (\ref{main_system}) in $[0,\cT) \times \RR^3$ and takes the initial data $\mathring{U}$
on $\{ x^0 = 0\}$. Moreover, $(u,\ep,g) \in G^{2,(s)}([0,\cT)\times \RR^3)$ and this solution is unique in this class. 
\label{proposition_existence_R4}
\end{proposition}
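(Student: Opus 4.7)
My plan is to apply the Leray--Ohya existence and uniqueness theorem for quasilinear systems in Gevrey classes (reviewed in Appendix \ref{section_appendix_Gevrey}) to the system (\ref{main_system}), whose Leray structure was just established in the previous lemma. What remains to verify is (i) that the characteristic polynomial $\det \sigma(\mathfrak{M})(\xi)$ of the principal symbol factors as a product of polynomials in $\xi$, each hyperbolic with respect to the covector $dx^0$ at the initial data $\mathring{U}$; (ii) that the maximum multiplicity of this factorization is compatible with the Gevrey index $1<s<\tfrac{17}{16}$; and (iii) that $\mathring{U}$ lies in the appropriate Gevrey space, which was already arranged in section \ref{section_initial_R4}.

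\textbf{Factoring the symbol.} Because (\ref{matrix_M}) is block upper-triangular,
\begin{gather*}
\det \sigma(\mathfrak{M})(\xi) \;=\; \det \sigma(m)(\xi)\cdot (g^{\mu\nu}\xi_\mu\xi_\nu)^{10},
\end{gather*}
a polynomial of total degree $30$ in $\xi$. The wave-operator factor is manifestly hyperbolic with respect to $dx^0$ because $\mathring{g}$ is a Lorentzian perturbation of the Minkowski metric by the construction in section \ref{section_initial_R4}. The substantive computation is the factorization of the $5\times 5$ fluid symbol $\sigma(m)(\xi)$, a polynomial of degree $10$ in $\xi$. The assumptions $\chi=4\eta$ and $\lambda=a_2\eta$ with $a_2\geq 4$ (i.e.\ $a_1=4$, $a_2\geq 4$), together with the normalization (\ref{normalization_u}), are what render this factorization into hyperbolic factors possible; the footnote to Theorem \ref{main_theorem} and Remark \ref{remark_factorization} emphasize that the factorization takes a genuinely different form for other values of $a_1,a_2$. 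I expect the resulting factors to consist of powers of the flow symbol $u^\mu\xi_\mu$, possibly of the acoustic/wave symbol $g^{\mu\nu}\xi_\mu\xi_\nu$, and a residual polynomial whose hyperbolicity must be read off directly using the positivity of $\eta$, $\chi$, $\lambda$ and of $\mathring\ep$.

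\textbf{Hyperbolicity at the data and invocation of Leray--Ohya.} Each factor is then to be checked for strict hyperbolicity with respect to $dx^0$ evaluated at $\mathring{U}$: $u^\mu\xi_\mu$ because $\mathring{u}$ is timelike (its zero-component was chosen via (\ref{normalization_u}) from data satisfying $1+(g_0)_{ij}v_0^iv_0^j>0$, which holds because $g_0$ is Riemannian); $g^{\mu\nu}\xi_\mu\xi_\nu$ because $\mathring{g}$ is Lorentzian; and the remaining viscous factor because $\mathring{\ep}\geq C'>0$ and $\eta$ is analytic and strictly positive on its domain. The Gevrey regularity of $\mathring U$ was secured in section \ref{section_initial_R4}: the bump function $\varphi$, the tensor $(g_0)_{ij}$, the coefficient $\kappa$, and the data $\ep_0,\ep_1,v_0,v_1$ are all in $G^{(s)}$, and the algebraic operations defining $\mathring U$ — including division by $\sqrt{1+(g_0)_{ij}v_0^iv_0^j}$, which is uniformly bounded from below — preserve the Gevrey class. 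Once the maximum multiplicity of the factorization is matched to the Gevrey window $s<\tfrac{17}{16}$, the Leray--Ohya theorem produces $\cT>0$ and a solution $U=(u^\beta,\ep,g_{\al\be})\in G^{2,(s)}([0,\cT)\times\RR^3)$ taking the data $\mathring{U}$ on $\{x^0=0\}$, unique in this class. Positivity of $\ep$ throughout $[0,\cT)\times\RR^3$ follows by continuity after shrinking $\cT$ if necessary, since $\mathring\ep\geq C'>0$.

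\textbf{Main obstacle.} The principal difficulty is the algebraic step: explicitly factoring $\det\sigma(m)(\xi)$ into hyperbolic polynomials and carefully accounting for the multiplicities so that the sharp Gevrey window $1<s<\tfrac{17}{16}$ is exactly the range dictated by Leray--Ohya. Every other ingredient is either standard (hyperbolicity of the reduced Einstein part) or has been pre-arranged in section \ref{section_initial_R4} (Lorentzian/timelike character of $\mathring g,\mathring u$; positivity and Gevrey regularity of $\mathring{\ep}$; and Gevrey regularity of the extended data), so once the factorization is in hand the proof reduces to a direct invocation of the Leray--Ohya theorem.
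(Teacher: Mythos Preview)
Your overall strategy matches the paper's: factor the characteristic determinant of (\ref{main_system}) into hyperbolic polynomials, count factors to match the Gevrey window $s<\tfrac{17}{16}$, and invoke Leray--Ohya. However, two substantive steps are missing from your outline, and without them the argument does not close.

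First, the Leray--Ohya existence theorem (Theorem \ref{Leray_Ohya_theorem_existence}) is stated for systems with \emph{diagonal} principal part; (\ref{main_system}) is not diagonal. The paper therefore first invokes the diagonalization theorem (Theorem \ref{Leray_Ohya_diagonal}) to obtain an equivalent system (\ref{system_diagonal}) whose principal part is the order-$30$ operator $p(U,\partial)$ times the identity. Cauchy data for this diagonalized system require $\partial_0^k U|_{\{x^0=0\}}$ for $k=0,\dots,29$, and the paper must show these can be computed inductively from $\mathring U$ and the original equations. This reduces to inverting a $5\times5$ matrix $\mathfrak a$ (the coefficients of $\partial_0^2 u^\be,\partial_0^2\ep$ in (\ref{div_T_gauge})--(\ref{normalization_u_gauge})); that $\det\mathfrak a\neq0$ is a separate computation using $a_2\geq4$, not a consequence of the hyperbolicity of the symbol. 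The Gevrey index $\tfrac{17}{16}$, incidentally, comes from the count $Q=17$ of hyperbolic factors (four linear from $p_1$, two quadratic from $p_2$, one quadratic from $p_3$, ten quadratic from $p_4$), not from a ``maximum multiplicity''.

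Second, a solution of the diagonalized system (\ref{system_diagonal}) is not automatically a solution of (\ref{main_system}): multiplying by the cofactor matrix $C$ can introduce spurious solutions. The paper closes this gap by an analytic-approximation argument: approximate $\mathring U$ in $G^{(s)}$ by analytic data $\mathring U_k$, for which power-series solutions to the \emph{original} system exist (Remark \ref{remark_solution_analytic}); these also solve (\ref{system_diagonal}) with the compatible higher-order data, hence by uniqueness coincide with the Leray--Ohya solutions, and the Leray--Ohya energy estimates pass to the $G^{(s)}$ limit. A minor related point: you invoke the normalization (\ref{normalization_u}) in the factorization, but at this stage $u^\mu u_\mu=-1$ has not been propagated off $\{x^0=0\}$ (that is Lemma \ref{lemma_normalization_u}); the paper's factorization keeps $\widehat u^\la\widehat u_\la$ as a coefficient and checks hyperbolicity first at a single point under the Minkowski/normalized assumption, then extends by continuity.
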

\begin{proof}
We fix the initial data $\mathring{U}$ as constructed in section \ref{section_initial_R4}
and consider $\widehat{U} = (\widehat{u}^\al, \widehat{\ep}, \widehat{g}_{\al\be}) \in \cA^s(\Si,Y)$.
Shrinking $Y$ if necessary, we can assume that $\widehat{g}_{\al\be}$ is a Lorentzian metric, that
$\widehat{\ep} > 0$, and that $\widehat{u}$ is time-like for $\widehat{g}_{\al\be}$, since
these properties hold for $\mathring{U}$.
Because the coefficients of the matrix of differential operators $\mathfrak{M}(U,\partial)$
depend on at most first derivatives of the unknowns, we can evaluate 
these coefficients on $\widehat{U}$.
Denote the corresponding operator by $\mathfrak{M}(\widehat{U},\partial)$.
The characteristic determinant $P(\widehat{U},\xi)$ of (\ref{main_system}),
evaluated at $\widehat{U}$, is
\begin{align}
\begin{split}
P(\widehat{U}, \xi) = \det \mathfrak{M}(\widehat{U},\xi) = p_1(\widehat{U},\xi) 
p_2(\widehat{U},\xi)  p_3(\widehat{U},\xi) 
 p_4(\widehat{U},\xi) 
\end{split}
\label{char_det}
\end{align}
where\footnote{We remark that compared to \cite{BemficaDisconziNoronha}, polynomial 
$p_3(\widehat{U},\xi)$ looks different. That is because in \cite{BemficaDisconziNoronha}
$\widehat{u}^\la \widehat{u}_\la$ had been replaced by $-1$ in view of (\ref{normalization_u}).
Strictly speaking, we are not allowed to do that since one has to prove that $u$ remains
normalized for positive time, which is done in Lemma \ref{lemma_normalization_u} below, but
this was ignored in \cite{BemficaDisconziNoronha} since there only a sketch of the proof
was presented (see the above Introduction).}
\begin{align}
\begin{split}
p_1(\widehat{U},\xi) \equiv p_1(\xi) = \frac{1}{12 \widehat{\ep}} \eta^4 (\widehat{u}^\mu \xi_\mu)^4,
\end{split}
\label{p1}
\end{align}
\begin{align}
\begin{split}
p_2(\widehat{U},\xi) \equiv p_2(\xi) = & \,
\left[ (a_2-1) ( (\widehat{u}^0)^2 \xi_0^2 + (\widehat{u}^1)^2 \xi_1^2 +(\widehat{u}^2)^2 \xi_2^2
+ (\widehat{u}^3)^2 \xi_3^2) - \xi^\mu \xi_\mu \right. 
\\
& \left. +2(a_2-1)( \widehat{u}^1 \widehat{u}^2 \xi_1 \xi_2 + \widehat{u}^1 \widehat{u}^3 \xi_1 \xi_3 + \widehat{u}^2 \widehat{u}^3 \xi_2 \xi_3 )  \right.
\\
&
\left.+ 2(a_2 -1 )\widehat{u}^0 \xi_0 \widehat{u}^i \xi_i \right]^2,
\end{split}
\label{p2}
\end{align}
\begin{align}
\begin{split}
p_3(\widehat{U},\xi) \equiv p_3(\xi) = & \,
-6( (a_2+5)a_2 + (a_2^2+7a_2 -8)\widehat{u}^\la \widehat{u}_\la  )(\widehat{u}^\mu \xi_\mu)^2 
\\
& 
+6(a_2+2)(1+5 \widehat{u}^\la \widehat{u}_\la ) \xi^\mu \xi_\mu,
\end{split}
\label{p3}
\end{align}
and 
\begin{align}
\begin{split}
p_4(\widehat{U},\xi) \equiv p_4(\xi) = & \,
(\xi^\mu \xi_\mu)^{10},
\end{split}
\label{p4}
\end{align}
and the contractions in these expressions are done with respect to the metric
$\widehat{g}_{\al\be}$.
The computation of $P(\widehat{U},\xi)$, and the corresponding factorization
in the above polynomials, is done through a lengthy and tedious algebraic calculation,
part of which was done with the help of the software 
Mathematica\footnote{See Appendix \ref{section_char_det}.}.
Note that the block diagonal form of $\mathfrak{M}(U,\partial)$ allowed us to compute
the characteristic determinant without providing the specific form of the operators 
$\widetilde{B}^\be(\partial u, g) \partial^2 g$ and $\widetilde{B}(\partial u, g) \partial^2 g$.

It is easy to see that the polynomials $\widehat{u}^\mu \xi_\mu$ and $\xi^\mu \xi_\mu$ are
hyperbolic polynomials as long as $\widehat{g}_{\al\be}$ is a Lorentzian metric
and $\widehat{u}$ is time-like with respect to $\widehat{g}_{\al\be}$. Both
conditions are satisfied in view of the constructions in section \ref{section_initial_R4}.
Therefore, $p_1(\xi)$ is the product of four hyperbolic polynomials (recall that $\widehat{\ep} > 0$ and $\eta(\widehat{\ep})>0$), and $p_4(\xi)$ is
the product of ten hyperbolic polynomials. We now move to analyze $p_2(\xi)$ and
$p_3(\xi)$.

Write $p_2(\xi) = (\widetilde{p}_2(\xi))^2$, where $\widetilde{p}_2(\xi)$ is
the second-degree polynomial between brackets in the definition of $p_2(\xi)$.
We claim that $\widetilde{p}_2(\xi)$ is a hyperbolic polynomial. To show this,
we need to investigate the roots $\xi_0 = \xi_0(\xi_1, \xi_2, \xi_3)$ of
the equation $\widetilde{p}_2(\xi) = 0$. Consider first the case where
$\widetilde{p}_2(\xi)$ is evaluated at the origin, i.e., 
$\widetilde{p}_2(\xi)=\widetilde{p}_2(\widehat{U}(0),\xi)$, and assume for a moment that
$\widehat{g}_{\al\be}(0)$ is the Minkowski metric and that $\widehat{u}^\mu \widehat{u}_\mu = -1$.
In this case, the roots are
\begin{align}
\begin{split}
\xi_{0,\pm} = & \, 
-\frac{1}{1 + (a_2 - 1)(1+\underline{\widehat{u}}^2)}
\left( (a_2 -1 )\underline{\widehat{u}}\cdot \underline{\xi} \sqrt{ 1 + \underline{\widehat{u}}^2} \right.
\\
& 
\left. \pm \sqrt{ (a_2 + (a_2 -1 ) \underline{\widehat{u}}^2 )\underline{\xi}^2  - (a_2 -1 )(\underline{\widehat{u}}\cdot \underline{\xi} )^2 }
\right),
\end{split}
\label{roots_p2}
\end{align}
where $\underline{\widehat{u}}=(\widehat{u}^1,\widehat{u}^2,\widehat{u}^3)$, 
$\underline{\widehat{u}}^2 = (\widehat{u}^1)^2+(\widehat{u}^2)^2+
(\widehat{u}^3)^2$, $\underline{\xi} = (\xi_1,\xi_2, \xi_3)$,
$\underline{\xi}^2 = \xi_1^1 + \xi_2^2 + \xi_3^2$,
 and $\cdot$
is the Euclidean inner product. We see that if  $\underline{\xi} = 0$,  then $\xi_{0,\pm} =0$,  and hence
$\xi=0$. Thus, we can assume $\underline{\xi} \neq 0$.
The Cauchy-Schwarz inequality gives 
$\underline{\widehat{u}}^2  \underline{\xi}^2 - (\underline{\widehat{u}} \cdot \underline{\xi} )^2 \geq 0$, hence
 $\xi_{0,+}$ and $\xi_{0,-}$
are real and distinct for $a_2 \geq 4$. We conclude that $\widetilde{p}_2(\xi)$ is a hyperbolic polynomial at the origin. 
Since the roots of a polynomial vary continuously with the polynomial coefficient, $\widetilde{p}_2(\xi)$ will have two distinct
real roots at any point on $\{x^0 = 0\}$ if $\widehat{g}_{\al\be}$ is sufficiently close to the Minkowski metric and $\widehat{u}^\mu\widehat{u}_\mu$ sufficiently close to $-1$.
We know from section \ref{section_initial_R4} that these last conditions are fulfilled upon taking $\cU$
and $Y$ sufficiently small (recall that $\mathring{g}_{\al\be}(0)$ equals the Minkowski metric.). 
Therefore, $\widetilde{p}_2(\xi)$ is a hyperbolic polynomial, and $p_2(\xi)$ is the product of two hyperbolic polynomials.

We now investigate the roots $\xi_0 = \xi_0(\xi_1, \xi_2, \xi_3)$ of
the equation $p_3(\xi) = 0$. As above, we first consider $p_3(\xi)$
evaluated at the origin and suppose that $\widehat{g}_{\al\be}(0)$ is the Minkowski metric
and that $\widehat{u}^\mu \widehat{u}_\mu = -1$, 
which produces 
\begin{align}
\begin{split}
\xi_{0,\pm} = & \,  
\frac{1}{ -2(2+a_2)-(a_2-4)(1+ \underline{\widehat{u}}^2)}
\left((a_2 - 4) \underline{\widehat{u}} \cdot \underline{\xi }\sqrt{1+\underline{\widehat{u}}^2}
\right.
\\ 
&
\left.\pm \sqrt{2}\sqrt{  ( 3a_2(2+a_2)+(a_2^2-2 a_2-8) \underline{\widehat{u}}^2 ) \underline{\xi}^2
-(a_2^2-2 a_2-8 )(\underline{\widehat{u}} \cdot \underline{\xi} )^2
}
\right).
\end{split}
\nonumber
\end{align}
As above, we can assume $\underline{\xi} \neq 0$, and the Cauchy-Schwarz inequality again gives 
$\underline{\widehat{u}}^2  \underline{\xi}^2 - (\underline{\widehat{u}} \cdot \underline{\xi} )^2 \geq 0$. We
readily verify that $(a_2^2-2 a_2-8 ) \geq 0$ and $3a_2(2+a_2) > 0$ for $a_2 \geq 4$. Therefore,
$\xi_{0,+}$ and $\xi_{0,-}$
are real and distinct, and $p_3(\xi)$ is a hyperbolic polynomial at the origin. As above, this implies that $p_3(\xi)$
is a hyperbolic polynomial.

We conclude that $P(\widehat{U},\xi)$ is the product of four degree one (i.e., $p_1(\xi)$),
two degree two (i.e., $p_2(\xi)$), one degree two (i.e., $p_3(\xi)$), and ten degree two (i.e., $p_4(\xi)$) hyperbolic polynomials. 
The Gevrey index of (\ref{main_system}) is thus $\frac{17}{16}$ (see
Remark \ref{remark_Gevrey_index}). Recall that $1 < s < \frac{17}{16}$ by assumption.

Since
$m_I - n_J = 2$ for all $I,J$, and $\sum_I m_I - \sum_J n_J \geq 2$, we have verified the conditions
of Theorem \ref{Leray_Ohya_diagonal} in the appendix. Hence we obtain the diagonalized system
\begin{gather}
\widetilde{\mathfrak{M}}(U,\partial) U = \widetilde{\mathfrak{q}}(U),
\label{system_diagonal}
\end{gather}
where $\widetilde{\mathfrak{M}}(U,\partial)$ is a diagonal matrix whose entries are differential operators
of order 30 (the order of the characteristic determinant, see the appendix) whose coefficients depend on at most 29 derivatives of $U$, 
and $\widetilde{\mathfrak{q}}(U)$ contains
the all the lower order terms.
We want to invoke Theorem \ref{Leray_Ohya_theorem_existence} to solve (\ref{system_diagonal}). To do so, we need to provide initial conditions for (\ref{system_diagonal}).
Since our goal is to obtain a solution to (\ref{main_system}) out of a solution to (\ref{system_diagonal}), such initial conditions
need to be compatible with solutions to (\ref{main_system}).

We shall show that all derivatives of $U$, restricted to $\{x^0 = 0 \}$, can be formally computed from (\ref{main_system}) and written
in terms of the initial data. In particular, initial conditions to (\ref{system_diagonal}) compatible with (\ref{main_system}) can be
determined. As usual in these situations, 
it suffices to show that we can inductively compute $\partial^k_0 U$ on $\{x^0 = 0\}$ as the tangential derivatives
$\partial_i$ can always be computed.

From (\ref{EE_gauge}), we can determine $\left. \partial^2_0 g_{\al\be} \right|_{\{x^0=0\}}$ in terms
of the initial data $\mathring{U}$. Using the result into (\ref{div_T_gauge}), we can write 
$\widetilde{B}^\be(\partial u, g) \partial^2 g$ restricted to $\{x^0 = 0 \}$ in terms of $\mathring{U}$. Equations
(\ref{div_T_gauge}) and (\ref{normalization_u_gauge}) then give
\begin{gather}
\mathfrak{a} 
\left(
\begin{matrix}
\partial^2_0 u^\be \\
\partial^2_0 \ep
\end{matrix}
\right)
= \mathfrak{b},
\nonumber
\end{gather}
where $\mathfrak{b}$ can be written in terms of the initial data on $\{ x^0 = 0\}$, and the matrix $\mathfrak{a}$ is 
the matrix of the coefficients of the terms $\partial^2_0 u^\be$ and $\partial^2_0 \ep$ in equations (\ref{div_T_gauge}) and (\ref{normalization_u_gauge}). At the origin, where $\mathring{g}_{\al\be}(0)$ equals the Minkowski metric, the determinant of 
$\mathfrak{a}$ is
\begin{gather}
\frac{\eta^4}{\ep_0}( 1 + \underline{\mathring{u}}^2)^2( 3a_2 + (a_2 - 4) \underline{\mathring{u}}^2)
(a_2 + (a_2 -1)  \underline{\mathring{u}}^2)^2,
\nonumber
\end{gather}
which is never zero for $a_2 \geq 4$ (recall that $\ep_0 > 0$ and $\eta(\ep_0)>0$). Invoking once more the fact that
$\mathring{g}_{\al\be}$ is a perturbation of the Minkowski metric, we conclude that $\left. \det(\mathfrak{a}) \right|_{\{ x^0 = 0\} }$ never 
vanishes. We can thus invert $\mathfrak{a}$ and write $\partial^2_0 u^\be$ and $\partial^2_0 \ep$ at $x^0=0$ in terms of 
$\mathring{U}$.

It is clear that we can continue this process: differentiate (\ref{EE_gauge}) with respect to $\partial_0$ to determine
$\left. \partial^3_0 g_{\al\be} \right|_{\{x^0=0\}}$; differentiate (\ref{div_T_gauge}) and (\ref{normalization_u_gauge})
with respect to $\partial_0$, use $\left. \partial^3_0 g_{\al\be} \right|_{\{x^0=0\}}$ to eliminate the resulting terms
$\widetilde{B}^\be(\partial u, g) \partial^3 g$ and $\widetilde{B}(\partial u, g) \partial^3 g$, 
and then solve for  $\partial^3_0 u^\be$ and $\partial^3_0 \ep$ at $x^0=0$
(notice that the matrix $\mathfrak{a}$ remains unchanged). Inductively, we can determine all derivatives $\partial_0^k U$
on $\{ x^0 = 0 \}$, $k=2,3, \dots$, in terms of $\mathring{U}$. Moreover, $\left. \partial_0^k U\right|_{\{x^0 = 0\}}$ 
are analytic expressions of $\mathring{U}$ and,
therefore, the initial conditions for (\ref{system_diagonal}) determined in this fashion will be in $G^{(s)}$.

The initial data for (\ref{system_diagonal}), denoted $\mathring{\widetilde{U}}$, consists of the original 
initial data $\mathring{U}$ for (\ref{main_system}), and the values of $\left. \partial^k_0 U \right|_{\{x^0=0\}}$ determined
by the above procedure for $k=2,\dots,29$.

\begin{remark}
The above procedure determines all derivatives of $U$, evaluated at $x^0 = 0$, in terms of the initial conditions
$\mathring{U}$. It follows that if the initial data $\mathring{U}$ is analytic, a well-known argument using power
series can be employed to construct an analytic solution to (\ref{main_system}) in a neighborhood of $\{x^0 = 0\}$.
These techniques for construction of analytic solutions, however, say nothing about causality.
\label{remark_solution_analytic}
\end{remark}

Having supplied (\ref{system_diagonal}) with  appropriate initial conditions, we can now invoke Theorem \ref{Leray_Ohya_theorem_existence} to conclude 
the following. There exist a $\widetilde{\cT} > 0$, a vector field $u: [0,\widetilde{\cT} ) \times \RR^3 \rar \RR^4$, a function $\ep: [0,\widetilde{\cT})\times\RR^3 \rar  (0,\infty)$,
and a Lorentzian metric $g$ defined on $[0,\widetilde{\cT}) \times \RR^3$, such that $U = (u^\be, \ep, g_{\al\be})$
satisfies (\ref{system_diagonal}) in $[0,\widetilde{\cT}) \times \RR^3$ and takes the initial data $\mathring{\widetilde{U}}$
on $\{ x^0 = 0\}$. Moreover, $(u,\ep,g) \in G^{2,(s)}([0,\widetilde{\cT})\times \RR^3)$ and this solution is unique in this class. 

(We note that in invoking Theorem \ref{Leray_Ohya_theorem_existence}, we are using that the intersections of the cones
determined by the polynomials $p_i(\xi)$ have non-empty interiors (recall definition \ref{definition_Leray_Ohya_hyperbolic}).
This follows from the above expressions, but it can also be verified from the explicit computations in section
\ref{section_causality}.)

The conclusions that $\ep > 0$ and $g$ is a Lorentzian metric follow by continuity in the $x^0$ variable, since
these conditions are true at $x^0 = 0$. 

Now we move to obtain a solution to (\ref{main_system}) in $\RR^4$. The argument is similar to the one in
\cite{Lichnerowicz_MHD_book}, thus we shall go over it briefly.

Let $\{ \mathring{U}_k \}_{k=1}^\infty$ be a sequence of analytic initial conditions for the system (\ref{main_system})
converging in $G^{(s)}(\{ x^0 = 0 \} )$ to $\mathring{U}$. For each $k$, let $(u_k, \ep_k, g_k)$ be the analytic solution 
to (\ref{main_system}), defined in a neighborhood of $\{x^0 = 0 \}$, and taking on the initial data $\mathring{U}_k$
(see Remark \ref{remark_solution_analytic}).
Let $\mathring{\widetilde{U}}_k$ be the initial data for (\ref{system_diagonal}) obtained from $\mathring{U}_k$ and 
compatible with (\ref{main_system}), i.e., 
the one derived by the inductive procedure previously described. Then, $\mathring{\widetilde{U}}_k \rar \mathring{\widetilde{U}}$
in $G^{(s)}(\{ x^0 = 0 \} )$.
In light of the compatibility of $\mathring{\widetilde{U}}_k$,
and because (\ref{system_diagonal}) was derived from (\ref{main_system}) via diagonalization, 
the solutions $(u_k, \ep_k, g_k)$ also satisfy (\ref{system_diagonal}). Furthermore, this solution to (\ref{system_diagonal})
also agrees with the one given by Theorem \ref{Leray_Ohya_theorem_existence} 
(since this theorem also applies for analytic data, i.e., $s=1$). The 
energy-type of estimates proved by Leray and Ohya \cite{LerayOhyaNonlinear}
guarantee then that 
 $(u_k, \ep_k, g_k) \rar (u,\ep, g)$ in $G^{(s)}$ and that $(u,\ep,g)$ satisfy the original system (\ref{main_system}). By construction,
 $(u,\ep,g)$ take on the initial data $\mathring{U}$.
 \end{proof}

\begin{remark}
The initial conditions for the VECF system have to satisfy the Einstein constraint equations 
(recall Definition \ref{definition_initial_data}). The initial conditions $\mathring{U}$ satisfy the constraints
in the region $\cW$ in light of the way that $\mathring{U}$ was constructed out of $\left. \cI \right|_{\cU}$. This is, naturally,
necessary for the eventual construction of a full solution to the VECF system. However, purely from the point of view
of (\ref{main_system}) in $\RR^4$, initial condition can be prescribed freely, i.e., they do not have to satisfy any constraints. 
Therefore, the existence of the analytic initial data $\mathring{U}_k$ follows simply by the density
of analytic functions in $G^{(s)}$. Also by density, we can guarantee that the components $(\mathring{\ep}_0)_k$ and 
$(\mathring{g}_{\al\be})_k$ in $\mathring{U}_k$ satisfy $(\mathring{\ep}_0)_k > 0$ and that 
$(\mathring{g}_{\al\be})_k$ is a Lorentzian metric.
\end{remark}

\begin{remark}
The above calculations involving $(a_2^2-2 a_2-8 ) \geq 0$ show why we have the technical assumption $a_2 \geq 4$.
As our calculations were presented already with $a_1 = 4$ in place, they do not reveal the reason for this assumption, 
which as follows.
Computing the characteristic determinant with general $a_1$ produces a very complicated expression with some terms
proportional to $a_1 - 4$. These terms vanish when $a_1 = 4$, and the corresponding expression simplifies to (\ref{char_det}).  This can be seen explicitly in Appendix \ref{section_char_det}.
\end{remark}

\subsection{Causality\label{section_causality}} Having obtained solutions, we now investigate the causality of equations (\ref{main_system}).
As in section \ref{section_solving_R4}, we use results and terminology recalled in the appendix.

\begin{lemma}
The solution $U = (u,\ep,g)$ to (\ref{main_system}) given in Proposition \ref{proposition_existence_R4} is causal, in the following
sense.
For any $x \in [0,\cT) \times \RR^3$, $( u(x), \epsilon(x), g(x))$ depends only 
on $\mathring{U}{\left.\right|_{\{ x^0 = 0\} \cap J^-(x)}}$, where
$J^{-}(x)$ is the causal past of $x$ (with respect to the metric $g$).
\label{lemma_causality}
\end{lemma}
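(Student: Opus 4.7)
The plan is to deduce causality from the general domain of dependence theorem for Leray-Ohya hyperbolic systems recalled in the appendix. Since the characteristic determinant of (\ref{main_system}) factored in (\ref{char_det}) as a product $p_1 p_2 p_3 p_4$ of hyperbolic polynomials, that theorem will assert that the value of the solution at a point $x$ depends only on the initial data in the intersection of $\{x^0 = 0\}$ with the union (convex envelope) of the past emission cones attached to each factor $p_j$. Causality with respect to $g$ therefore reduces to the pointwise geometric claim that every one of these characteristic cones is contained in the causal past cone $J_g^-(x)$ of the metric $g$.

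I would dispose of two factors at once. The polynomial $p_4(\xi)=(\xi^\mu\xi_\mu)^{10}$ is a power of the null form of $g$, so its past emission cone is precisely $J_g^-$ and no work is needed there. The polynomial $p_1(\xi)$ is a power of $\widehat{u}^\mu\xi_\mu$: its characteristic hypersurface in cotangent space is the hyperplane orthogonal to $\widehat{u}$, whose dual in the tangent space is the line spanned by $\widehat{u}$. Since $\widehat{u}$ is timelike for $\widehat{g}$, this line is contained in $J_g^-$.

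The substantive work lies in verifying that the characteristic cones of $p_2$ and $p_3$ lie inside the light cone of $g$. Since the property is Lorentz-covariant pointwise, at each point I would pass to a frame in which $\widehat{g}_{\al\be}$ equals the Minkowski metric and $\widehat{u}^\mu = (1,0,0,0)$ (so $\underline{\widehat{u}}=0$). In this frame the roots in (\ref{roots_p2}) collapse to $\xi_0^2 = |\underline{\xi}|^2/a_2$, yielding a $p_2$-propagation speed $1/\sqrt{a_2} \leq 1/2$ for $a_2\geq 4$, which is strictly subluminal. An analogous computation for $p_3(\xi)$ in the same frame yields $\xi_0^2 = \tfrac{2(a_2+2)}{3a_2}|\underline{\xi}|^2$, and one checks that $\tfrac{2(a_2+2)}{3a_2}\leq 1$ holds exactly when $a_2\geq 4$, with equality at $a_2=4$. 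This is precisely why the assumption $a_2\geq 4$ appears and why it is sharp. Both characteristic cones therefore lie inside the light cone of $g$ at the chosen point; the same conclusion holds at every point of $[0,\cT)\times\RR^3$ (after possibly shrinking $\cT$) by the same continuity-in-the-data argument used in Section \ref{section_solving_R4}, since the closeness of $\widehat{g}$ to Minkowski and $\widehat{u}^\mu\widehat{u}_\mu$ to $-1$ is preserved on a short time interval.

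The main obstacle is simply the algebraic verification of the two phase-speed inequalities for $p_2$ and $p_3$ — each of which reduces to a single square-root inequality — together with careful invocation of the Leray-Ohya domain of dependence result in the Gevrey class rather than in $C^\infty$. With both ingredients in hand the lemma follows immediately by intersecting the (widest) characteristic cone, which in our case is the $g$-light cone coming from $p_4$, with the Cauchy hypersurface.
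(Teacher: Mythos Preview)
Your proposal is correct and follows the same overall architecture as the paper: invoke the Leray--Ohya domain of dependence theorem (Theorem~\ref{Leray_Ohya_causality}) and reduce the lemma to the pointwise containment $K^{*,\pm}(x)\subseteq\Gamma_i^{*,\pm}(x)$ for each factor $p_i$. Your handling of $p_1$ and $p_4$ is the same as the paper's.

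The genuine difference is in the verification for $p_2$ and $p_3$. The paper works in the given coordinates, temporarily assumes $g(x)$ equals the Minkowski metric and $u^\lambda u_\lambda=-1$, but retains a general spatial velocity $\underline{u}$; it then expresses the roots $\xi_{0,\pm}$ of (\ref{roots_p2}) as slope functions $s_\pm(u,\theta)$ of the angle $\theta$ between $\underline{u}$ and $\underline{\xi}$, and checks $-1<s_\pm<1$ by evaluating at $\theta\in\{0,\pi,2\pi\}$ and locating critical points. You instead observe that $p_2$ and $p_3$ are built entirely from the Lorentz scalars $u^\mu\xi_\mu$, $\xi^\mu\xi_\mu$, and $u^\lambda u_\lambda$, so the cone containment is frame-invariant and can be checked in the local rest frame $u=(1,0,0,0)$. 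This collapses each case to a single phase-speed inequality and, as you note, makes the threshold $a_2\ge 4$ (with equality on the $p_3$ cone) completely transparent. The paper's computation is longer but avoids having to articulate why the pointwise frame change is legitimate; your route is shorter and equally valid once that covariance is stated. Both arguments finish identically with a perturbation step, since $g$ is only close to Minkowski and the normalization $u^\lambda u_\lambda=-1$ is not yet available (it is proved afterward in Lemma~\ref{lemma_normalization_u}).
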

\begin{proof}
Fix  $x \in [0,\cT) \times \RR^3$. The characteristic determinant of (\ref{main_system}) at $x$ is given by
(\ref{char_det}), with the obvious replacement of $\widehat{U}$ by $U$ and evaluated at $x$;
the polynomials $p_i(U(x), \xi) \equiv p_i(x,\xi)$, $i=1,\dots, 4$, are given by expressions (\ref{p1}) to (\ref{p4}),
again with the obvious replacement by $U(x)$. By the same argument used in section
\ref{section_solving_R4} to prove that the $p_i(\xi)$'s are hyperbolic polynomials on $\{ x^0 = 0 \}$, namely, that $g_{\al\be}$
is near the Minkowski metric, we know that the polynomials $p_i(x,\xi)$ are hyperbolic (perhaps after shrinking $\cT$ if necessary).

Denote by $V_i(x)$ the characteristic cone $\{ p_i(x,\xi) = 0 \}$, and by $\Ga_i^{*,\pm}(x)$ the corresponding (forward
and backward) convex cones (on the cotangent space). Let $K^{*,\pm}(x)$ be the (forward and backward) time-like interiors
of the light-cone $\{ g^{\mu \nu}(x) \xi_\mu \xi_\nu = 0 \}$. We need to show that $K^{*,\pm}(x) \subseteq \Ga_i^{*, \pm}(x)$ (see Remark \ref{remark_causality_A_g}).
This is straightforward for $i=1$ and $i=4$.

Assume for a moment that $g$ is the Minkowski metric at $x$ and that $u^\la u_\la = -1$
(note that we have not proved yet that $u$ remains normalized for $x^0>0$). The 
roots of $\{ p_2(x,\xi) = 0 \}$ are given by (\ref{roots_p2}), changing $\widehat{u}$ by 
$u$, which we can write as
\begin{gather}
\xi_{0,\pm} = s_\pm(u,\theta) \sqrt{ \underline{\xi}^2 },
\label{cone_p2_causality}
\end{gather}
where 
\begin{align}
\begin{split}
s_\pm(u,\theta) = & \, 
-\frac{1}{1 + (a_2 - 1)(1+\underline{u}^2)}\left( (a_2 -1 )\sqrt{\underline{u}^2}\cos\theta \sqrt{ 1 + \underline{u}^2}\right.
\\
&
\left. \pm \sqrt{ a_2 + (a_2 -1 ) \underline{u}^2  - (a_2 -1 )\underline{u}^2 \cos^2\theta  }
\right),
\end{split}
\nonumber
\end{align}
$\theta$ is the angle between $\underline{u}$ and $\underline{\xi}$ in $\RR^3$, we used $\underline{u}\cdot \underline{\xi} = \sqrt{\underline{u}^2} \sqrt{\underline{\xi}^2} \cos\theta$, and we omitted the dependence of $u$ and $\theta$ on $x$ for simplicity.

Equation (\ref{cone_p2_causality}) determines the two halves of the characteristic cone $V_2(x)$ in the cotangent space
at $x$. We will have that $K^{*,\pm}(x) \subseteq \Ga_2^{*,\pm}(x)$ if the slopes $s_\pm$ satisfy 
$-1 < s_\pm(u,\theta) < 1$ for each $u$ and $\theta$. To see that this is the case, compute
\begin{gather}
s_\pm(u,0)= s_\pm(u,2\pi) = -\frac{\pm \sqrt{a_2} +(a_2-1)\sqrt{ \underline{u}^2(1+\underline{u}^2)}}{1+(a_2-1)(1+\underline{u}^2)},
\nonumber
\end{gather}
and observe that this expression is always between $-1$ and $1$ for $a_2 \geq 4$. We seek the maxima and minima of 
$s_\pm(u,\theta)$ for $0 < \theta < 2\pi$. Computing the derivative with respect to $\theta$ and solving for $\sin\theta$, we find
$\sin \theta = 0$, i.e., $\theta = \pi$. We readily verify that $-1 < s_\pm(u,\pi) < 1$,
thus 
 $-1 < s_\pm(u,\theta) < 1$.
Since this last condition is open, the result remains true when $g$ is sufficiently close to the Minkowski metric and $u$ sufficiently close to unitary, which 
is the case if $\cT$ is taken sufficiently small. The same argument shows that  $K^{*,\pm}(x) \subseteq \Ga_3^{*,\pm}(x)$, where
again one uses the condition $a_2 \geq 4$.

We conclude that for any $x \in [0,\cT) \times \RR^3$, we have $K^{*,\pm}(x) \subseteq \bigcap_{i=1}^4 \Ga_i^{*,\pm}(x)$,
and the result now follows from Theorem \ref{Leray_Ohya_causality} and Remark \ref{remark_causality_A_g}.
\end{proof}

\begin{remark}
The characteristics associated with $p_1(\xi)$ and $p_4(\xi)$ are of course those of the flow lines and 
gravitational waves. The characteristics associated with $p_3(\xi)$ and $p_2(\xi)$ are interpreted, respectively, 
as sound waves and shear waves. The latter is  sometimes called a second sound wave and is present
also in the M\"uller-Israel-Stewart theory \cite{Hiscock_Lindblom_stability_1983}.
It is useful to compare these characteristics to those of the ideal fluid. In the latter case we
have the flow lines and the sound cone (i.e., the characteristics of the sound waves; 
see \cite{DisconziSpeckRelEulerNull} for a detailed discussion of the role 
of the sound cone in the relativistic Euler equations). Here
it is as if the the sound cone had ``split" into two sound-type characteristics. This 
resembles what happens in magnetohydrodynamics: there two different characteristics are present
for the magnetoacoustic waves, namely, the so-called fast and slow magnetoacoustic waves
(see \cite{AnileBook} for details).
\end{remark}

\subsection{Existence and causality for the system in $\RR \times \Si$\label{section_existence_causality_full}}
Here we show how the solution found in section \ref{section_solving_R4} can be used to construct
a causal solution in a region of $\RR \times \Si$, thus effectively proving Theorem \ref{main_theorem}. 
Recall that we embedded $\Si$ into $\RR \times \Si$.

\begin{remark}
Consider the solution $U=(u,\ep,g)$ to (\ref{main_system}) obtained in Proposition
\ref{proposition_existence_R4}.
 Let $p$ be a point
on $\{ x^0 = 0 \} \times \Si$ and $\cW$ be
as in section \ref{section_initial_R4}.
Let $D_g^+(\cW) \subseteq [0,\cT) \times \RR^3$ be the future domain of dependence of $\cW$
in the metric $g$, where replacing $\cW$ with a smaller set if necessary, we can assume that 
$x^0 < \cT$ for every $(x^0,x^1,x^2, x^3) \in D_g^+(\cW)$.  
In the coordinates on $D^+_g(\cW)$ induced from the coordinates on $[0,\cT)\times \cW$, the solution $U$ is in 
$G^{(2,s)}$ The solution will remain in $G^{(2,s)}$ upon coordinate changes that are 
Gevrey regular \cite{LionsMagenesVol3}. Note that there are plenty of such coordinate changes in that
a smooth manifold always admits a maximal compatible analytic atlas.
\label{remark_Gevrey_regularity}
\end{remark}

\begin{lemma}
It holds that $u^\la u_\la = -1$ in  $D_g^+(\cW) $.
\label{lemma_normalization_u}
\end{lemma}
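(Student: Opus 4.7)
The plan is to observe that $f := u^\lambda u_\lambda + 1$ satisfies a homogeneous second-order linear Cauchy problem on $D_g^+(\cW)$ with vanishing initial data, and then to invoke Leray--Ohya uniqueness to conclude $f\equiv 0$.

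To derive the equation for $f$, I would use the algebraic identity
\begin{equation*}
u^\alpha u^\mu \nabla_\alpha \nabla_\mu (u^\lambda u_\lambda)
= 2\, u_\lambda u^\alpha u^\mu \nabla_\alpha \nabla_\mu u^\lambda
+ 2\, u^\alpha u^\mu (\nabla_\alpha u_\lambda)(\nabla_\mu u^\lambda),
\end{equation*}
which holds for any vector field $u$ and metric $g$. Equation (\ref{normalization_u_gauge}) was obtained from $u^\lambda u_\lambda = -1$ by applying the operator $u^\alpha u^\mu \nabla_\alpha \nabla_\mu$ to both sides and then converting covariant to partial derivatives, with the resulting Christoffel contributions absorbed into $\widetilde{B}(\partial u,g)\,\partial^2 g$ and $B(\partial u,\partial g)$. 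Rewriting (\ref{normalization_u_gauge}) back in covariant form, it reads $u_\lambda u^\alpha u^\mu \nabla_\alpha \nabla_\mu u^\lambda + u^\alpha u^\mu(\nabla_\alpha u_\lambda)(\nabla_\mu u^\lambda) = 0$, which by the identity is precisely $u^\alpha u^\mu \nabla_\alpha \nabla_\mu f = 0$.

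Next I would verify the vanishing of $f$ and $\partial_0 f$ on $\cW$. Inside $\cW$ the bump function $\varphi$ from Section \ref{section_initial_R4} equals one, so $\mathring{g}_{\al\be}$, $\mathring{u}^\al$ and their first time derivatives agree on $\{x^0=0\}\cap\cW$ with the initial data of Section \ref{section_initial_data}. The choices $\mathring{u}^0 = \sqrt{1+(g_0)_{ij} v_0^i v_0^j}$ and the explicit formula for $\partial_0 \mathring{u}^0$ were built precisely by solving $u^\lambda u_\lambda|_{x^0=0} = -1$ and $\partial_0(u^\lambda u_\lambda)|_{x^0=0} = 0$ algebraically, so $f|_{\cW} = \partial_0 f|_{\cW} = 0$.

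To conclude, I would apply Leray--Ohya theory to the scalar Cauchy problem $u^\alpha u^\mu \nabla_\alpha \nabla_\mu f = 0$ on $D_g^+(\cW)$ with zero data on $\cW$. The principal symbol is $(u^\mu \xi_\mu)^2$, the square of the linear hyperbolic polynomial $u^\mu \xi_\mu$ (hyperbolic because $u$ remains timelike on $D_g^+(\cW)$ by continuity from the initial data, shrinking $\cT$ if necessary); hence the operator is Leray--Ohya hyperbolic with a single distinct characteristic of multiplicity two, and its Gevrey well-posedness class contains every $s<2$, in particular $s< \frac{17}{16}$. The surface $\cW$ is non-characteristic for this operator because $(u^0)^2 \neq 0$ on $\cW$. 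Since the coefficients lie in $G^{1,(s)}$ (as $(u,\ep,g)\in G^{2,(s)}$), the scalar Cauchy problem has a unique $G^{(s)}$ solution, which by the vanishing data must be $f\equiv 0$. Finite propagation at the $g$-light cone is guaranteed by the same inclusion of cones verified for the factor $p_1=(u\cdot\xi)^4$ in the proof of Lemma \ref{lemma_causality}, so the domain of determinacy of $\cW$ for the $f$-equation contains $D_g^+(\cW)$.

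The main obstacle is the degeneracy of the principal symbol: the equation is weakly, not strictly, hyperbolic, so ordinary $C^\infty$-energy methods do not deliver uniqueness. This is precisely the setting for which Leray--Ohya theory, already driving the proof of Proposition \ref{proposition_existence_R4}, was devised, and it supplies the required Gevrey uniqueness up to index $2$.
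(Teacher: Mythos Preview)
Your proposal is correct and follows essentially the same approach as the paper: rewrite equation (\ref{normalization_u_gauge}) in covariant form as $u_\lambda u^\alpha u^\mu \nabla_\alpha \nabla_\mu u^\lambda + u^\alpha u^\mu(\nabla_\alpha u_\lambda)(\nabla_\mu u^\lambda) = 0$, recognize this as $\tfrac{1}{2}u^\alpha u^\mu \nabla_\alpha \nabla_\mu(u^\lambda u_\lambda)=0$, and invoke Leray--Ohya uniqueness for this scalar equation with the given initial data. Your version is more detailed than the paper's---you spell out the algebraic identity, the verification of the initial conditions on $\cW$, the Gevrey index of the scalar operator, and the cone inclusion---but the underlying argument is identical.
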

\begin{proof}
The vector field
$u$ satisfies (\ref{normalization_u_gauge}), whose explicit form is
\begin{gather}
u_\la u^\al u^\mu \nabla_\mu \nabla_\mu u^\la + u^\al \nabla_\al u_\la u^\mu \nabla_\mu u^\la = 0.
\nonumber
\end{gather}
This can be written as
\begin{gather}
\frac{1}{2} u^\al u^\mu \nabla_\al \nabla_\mu (u_\la u^\la ) = 0.
\nonumber
\end{gather}
This is an equation for the scalar $u_\la u^\la$. The operator $u^\al u^\mu \nabla_\al \nabla_\mu$ satisfies the assumptions
of Theorem \ref{Leray_Ohya_theorem_existence}. Therefore, $u_\al u^\al = -1$ in  $D^+_g(\cW)$ if this condition is satisfied initially, which is the case by
construction.
\end{proof}

\begin{lemma}
For every $q \in \Si$ there exists a 
neighborhood $Z_q \subseteq \Si$ of $q$ in $\Si$ and a globally hyperbolic development $M_q$ of 
$\left. \cI \right|_{Z_q}$, where $M_q \subseteq [0,\cT_q) \times \Si$ for some $\cT_q > 0$.
\label{lemma_existence_globally_hyper}
\end{lemma}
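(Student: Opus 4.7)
Fix $q \in \Si$ and set $p = (0,q) \in \RR \times \Si$. The strategy is to realize $M_q$ as a domain of dependence inside the local Gevrey solution already produced by Proposition \ref{proposition_existence_R4}, and then to show that inside this region the reduced system in wave coordinates coincides with the full VECF system.

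First I would pick local coordinates $\{x^\al\}$ around $p$ as in Section \ref{section_initial_data}, with $\{x^i\}$ being $g_0$-normal coordinates centered at $q$, and choose open sets $\cW \Subset \cV \Subset \{x^0=0\} \cap \cU$ with $q \in \cW$ together with the cutoff $\varphi$ of Section \ref{section_initial_R4}. This produces the extended initial data $\mathring{U}$ on $\RR^3$, and Proposition \ref{proposition_existence_R4} then gives a solution $(u,\ep,g) \in G^{2,(s)}([0,\cT)\times\RR^3)$ of the reduced system (\ref{main_system}) taking the initial data $\mathring{U}$. Let $D^+_g(\cW) \subseteq [0,\cT)\times\RR^3$ be the future domain of dependence of $\cW$ with respect to $g$; shrinking $\cW$ and $\cT$ if necessary (Remark \ref{remark_Gevrey_regularity}) we may assume $D^+_g(\cW)$ lies inside the slab of existence.

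Next I would apply the causality conclusion of Lemma \ref{lemma_causality}: for every $x \in D^+_g(\cW)$ the value $(u(x),\ep(x),g(x))$ depends only on $\mathring{U}|_{\{x^0=0\}\cap J^-(x)}$, a subset of $\cW$ on which $\varphi \equiv 1$ and hence $\mathring{U}$ coincides with the intrinsic data coming from $\cI$. Therefore the solution on $D^+_g(\cW)$ is independent of how the data was cut off outside $\cV$ and depends only on $\cI|_{Z_q}$ for $Z_q := \cW$ (identified with a neighborhood of $q$ in $\Si$).

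The crucial intermediate step is to upgrade this solution of the reduced system (\ref{main_system}) to a solution of the full VECF system on $D^+_g(\cW)$, which I would carry out by the standard wave-gauge propagation argument adapted to the Gevrey setting. By the choice of $\partial_0 g_{0\al}(0,\cdot)$ in Section \ref{section_initial_data}, the reduced harmonic one-forms $H^\al := g^{\mu\nu}\Ga^\al_{\mu\nu}$ vanish on $\{x^0=0\}\cap \cU$; the Einstein constraint equations built into the initial data $\cI$ then force $\partial_0 H^\al = 0$ initially on $\cW$. Combining (\ref{EE_gauge}) with the twice contracted Bianchi identity and the identity $\nabla_\al T^{\al\be}=0$ (which is exactly (\ref{div_T_gauge}), (\ref{normalization_u_gauge}) together with Lemma \ref{lemma_normalization_u}) one obtains a closed second-order linear wave equation
\begin{gather*}
g^{\mu\nu}\partial^2_{\mu\nu} H^\al = F^\al(H,\partial H,g,\partial g,u,\partial u,\ep,\partial\ep).
\end{gather*}
Theorem \ref{Leray_Ohya_theorem_existence} applied to this linear hyperbolic equation (or alternatively a direct energy estimate) yields uniqueness in $G^{2,(s)}$, so $H^\al \equiv 0$ on $D^+_g(\cW)$; then (\ref{EE_gauge}) is equivalent to (\ref{EE}). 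Combined with Lemma \ref{lemma_normalization_u}, the triple $(g,u,\ep)$ solves the full VECF system on $D^+_g(\cW)$.

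Finally I would set $M_q := D^+_g(\cW)$ with the metric $g$ and the obvious embedding $i : Z_q \hookrightarrow M_q$. By construction the induced metric and second fundamental form on $i(Z_q)$ reproduce $(g_0,\kappa)|_{Z_q}$ and the matter data reproduces $(\ep_0,\ep_1,v_0,v_1)|_{Z_q}$, while global hyperbolicity with Cauchy surface $i(Z_q)$ is built into the definition of $D^+_g(\cW)$. The main obstacle I anticipate is the wave-gauge propagation step: one must check that the derivation of the wave equation for $H^\al$ goes through given that the matter equations (\ref{div_T_gauge}), (\ref{normalization_u_gauge}) have been put in the particular second-order quasilinear form used to diagonalize the Leray-Ohya system, and that the Gevrey class $G^{2,(s)}$ is preserved under the resulting uniqueness argument. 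Everything after that is routine geometric bookkeeping.
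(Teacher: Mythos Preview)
Your proposal is correct and follows essentially the same route as the paper: build the local solution of the reduced system via Proposition \ref{proposition_existence_R4}, restrict to $D^+_g(\cW)$ using Lemma \ref{lemma_causality}, and then pass from the reduced system to the full VECF system. The only difference is one of detail: the paper dispatches the last step in a single sentence (``It is well-known that a solution to the reduced equations within $D_g^+(\cW)$ is also a solution to the full Einstein's equations if and only if the constraints are satisfied''), whereas you sketch the wave-gauge propagation argument for $H^\al$ explicitly and correctly tie in Lemma \ref{lemma_normalization_u} to justify that the matter equations really yield $\nabla_\al T^{\al\be}=0$.
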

\begin{proof}
 Let $p$ be a point
on $\{ x^0 = 0 \} \times \Si$ and $\cW$ be
as in section \ref{section_initial_R4}.
Since the initial conditions $\mathring{U}$
(where $\mathring{U}$ is as in section \ref{section_initial_R4})
 agree on $\cW$ with those from the initial data $\cI$, in view of Lemma \ref{lemma_causality}, we conclude that $U$ is a solution 
 to the reduced Einstein equations
within $D_g^+(\cW)$. It is well-known that a solution to the reduced equations within $D_g^+(\cW)$
is also a solution to the full Einstein's equations if and only if the constraints are satisfied, which is the case by the definition of 
$\cI$. Because $p$ was an arbitrary point, the result is proven.
\end{proof}

We now glue the different $M_q$'s in order to obtain a global (in space) solution. 

\begin{proposition}
Let $q,r \in \Si$,  $Z_q$ and $Z_r$ be neighborhoods of $q$ and $r$ as in lemma \ref{lemma_existence_globally_hyper},
with  globally hyperbolic developments $M_q$ and $M_r$
of $\left. \cI \right|_{Z_q}$ and $\left. \cI \right|_{Z_r}$, respectively, and corresponding solutions $U_q = (u_q, \ep_q, g_q)$ 
and $U_r = (u_r, \ep_r, g_r)$ of the VECF equations. Assume that 
$Z_q \cap Z_r \neq \varnothing$. Then, for any $w \in Z_q \cap Z_r$, there exist neighborhoods $\cU_q$ and $\cU_r$ of $w$ in
$M_q$ and $M_r$, respectively, and a diffeomorphism $\psi: \cU_q \rar \cU_r$ such that $U_q = \psi^*(U_r)$.
\label{proposition_overlapping}
\end{proposition}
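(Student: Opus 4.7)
The plan is the standard gauge-matching and uniqueness argument used for gluing local globally hyperbolic developments of Einstein's equations, adapted to our Gevrey setting. The only non-routine issue is that $M_q$ and $M_r$ were built in two \emph{a priori} unrelated wave coordinate systems, so we cannot directly compare the two solutions; we first have to move both sides into coordinates that match on the initial slice, and then invoke uniqueness for the reduced system (\ref{main_system}).

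First, I would fix $w \in Z_q \cap Z_r$ and choose in $\Sigma$ a single coordinate chart $\{x^i\}_{i=1}^3$ around $w$, contained in $Z_q \cap Z_r$, in which the induced data from $\mathcal{I}$ (namely $g_0, \kappa, \epsilon_0, \epsilon_1, v_0, v_1$) are in $G^{(s)}$; this is possible as in Section \ref{section_initial_data}. Next, I would extend these spatial coordinates to wave coordinates $\{x^\alpha\}_{\alpha=0}^3$ on a small neighborhood $\cU_q$ of $w$ in $M_q$, by solving $\Box_{g_q} x^\alpha = 0$ with Cauchy data $x^0 |_\Sigma = 0$, $\partial_0 x^0 |_\Sigma$ chosen so that $\partial_0$ is the unit $g_q$-normal to $\Sigma$, and $x^i|_\Sigma$ equal to the prescribed spatial coordinates with $\partial_0 x^i|_\Sigma = 0$. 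This is a linear wave equation with Gevrey coefficients and Gevrey data, hence solvable in a $G^{(s)}$ neighborhood of $w$ (Theorem \ref{Leray_Ohya_theorem_existence} applied to the operator $\Box_{g_q}$), and its differential is the identity on $\Sigma$ at $w$, so after shrinking $\cU_q$ it defines a Gevrey-regular coordinate system. I would do exactly the same construction on the $M_r$ side, getting wave coordinates $\{y^\alpha\}$ on a neighborhood $\cU_r$ of $w$, with the \emph{same} prescribed traces on $\Sigma$.

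In these adapted wave coordinates, both $(u_q, \epsilon_q, g_q)$ and (the pullback of) $(u_r, \epsilon_r, g_r)$ satisfy the reduced system (\ref{main_system}) on a common coordinate patch $\cV \subseteq \RR^4$ around the origin. Because the initial data on $\Sigma$ are both obtained from the restriction of $\cI$ to the same chart, the Cauchy data $U|_{\{x^0=0\}}$ in these coordinates coincide for the two solutions; in particular, the inductively determined higher time derivatives $\partial_0^k U|_{x^0=0}$ described in Section \ref{section_solving_R4} coincide as well, so the initial data $\mathring{\widetilde{U}}$ of the diagonalized system (\ref{system_diagonal}) agree. The uniqueness clause of Proposition \ref{proposition_existence_R4}, applied in the Gevrey class $G^{2,(s)}$, then forces the two coordinate representations of the solution to coincide on the common patch.

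Defining $\psi : \cU_q \to \cU_r$ to be the map sending the point with $x$-coordinates $(x^0,\dots,x^3)$ to the point with $y$-coordinates $(x^0,\dots,x^3)$, we get a Gevrey diffeomorphism (after possibly shrinking $\cU_q$, $\cU_r$) such that $U_q = \psi^* U_r$ on $\cU_q$. The only genuine obstacle is the gauge-matching step, since the two developments were built in uncorrelated wave gauges; once Gevrey wave coordinates with common traces on $\Sigma$ are produced, Gevrey uniqueness for (\ref{main_system}) delivers the conclusion. I would also remark that the construction is fully analogous to the classical argument for the Einstein--Euler or vacuum Einstein systems, the only difference being that all the PDE theory (wave equation for the coordinate gauge, uniqueness for the reduced system) has to be invoked in the Gevrey class $G^{(s)}$ with $1 < s < \tfrac{17}{16}$, which is exactly what the Leray--Ohya theory recalled in the appendix guarantees.
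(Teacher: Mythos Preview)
Your proposal is correct and follows essentially the same strategy as the paper: construct harmonic (wave) coordinates for $g_q$ and $g_r$ near $w$ with identical Cauchy data on $\Sigma$, pull both solutions back to a common coordinate patch, and invoke the Gevrey uniqueness for the reduced system (\ref{main_system}) to conclude. The only cosmetic difference is that the paper fixes the spatial chart on $\Sigma$ by taking $g_0$-normal coordinates centered at $w$ (so that the chart is intrinsically determined by $g_0$), whereas you simply pick one Gevrey chart in $Z_q\cap Z_r$ and use it on both sides; either choice guarantees that the traces of the wave-coordinate functions on $\Sigma$ agree, which is all that is needed.
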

\begin{proof}

We shall construct harmonic
coordinates for $g_q$ in a neighborhood of $w$ in $M_q$ as follows. Identifying (a portion of) 
$\Si$ with its embedding in $M_q$, take normal coordinates $(V,\{ y^i \})$ for $g_0$ on $\Si$ centered at $w$, where
$g_0$ comes from the initial data $\cI$. Note that the initial data is Gevrey regular in the $\{ y^i \}$ coordinates 
(see the argument in section \ref{section_initial_data}). We can thus assume that $U_q$ is in $G^{(2,s)}$ (see Remark
\ref{remark_Gevrey_regularity}) 

On $[0,\cT_q )\times V$, 
where $\cT_q > 0$ is some small number such that $U_q$ is defined on  $[0,\cT_q)\times V$,
we introduce coordinates $\{y^\al\}$, $y^0 \in [0,\infty)$. Consider family of initial-value
problems parametrized by $\al$:
\begin{align}
\begin{split}
\nabla^\mu \nabla_\mu  f^{(i)} & = 0, \\
f^{(i)}(0,y^1,y^2,y^3) & = \, y^i, 
\\
\partial_0 f^{(i)}(0,y^1,y^2,y^3) & = \, 0,
\end{split}
\nonumber
\end{align}
and
\begin{align}
\begin{split}
\nabla^\mu \nabla_\mu  f^{(0)} & = 0, \\
f^{(0)}(0,y^1,y^2,y^3) & = \, 0, 
\\
\partial_0 f^{(0)}(0,y^1,y^2,y^3) & = \, 1,
\end{split}
\nonumber
\end{align}
where $\nabla$ is the covariant derivative in the metric $g_q$. 
This problem has a Gevrey regular solution in a neighborhood of $w$ in $[0,\cT_q)\times V$, and 
a standard implicit function type of argument shows that
the functions $x^\al \equiv f^{(\al)}$ define (harmonic) coordinates near $w$. We now consider the change of coordinates 
$x = x(y): [0,\cT_q^{\prime}) \times V^\prime \rar W \subseteq [0,\infty) \times \RR^3$, $x = (x^0, x^1, x^2, x^3)$, where 
$V^\prime$ is a neighborhood of $w$ in $V$, $\cT^{\prime} > 0$ is determined by the foregoing conditions guaranteeing
the existence of the coordinates $\{x^\al\}$, and $W$ is an open set containing the origin. 
Pulling $U_q$ back to $W$ via $x^{-1}$, it follows from these constructions 
that $(x^{-1})^* (U_q)$ satisfies the reduced Einstein equations in $W$. Since $U_q$ originally satisfied  (\ref{normalization_u})
and (\ref{div_T}) as well, we conclude that is a solution to (\ref{main_system}) in $W$.

We can repeat the above argument to obtain wave coordinates $\{z^\al\}$ for $g_r$. Because 
$(V,\{ y^i \})$ is intrinsically determined by $g_0$, and $M_q$ and $M_r$  induce on $Z_q \cap Z_r$ the same initial data,
the map $z$ agrees with $x$ on $\{0\} \times V^\prime$ (in the region where both are defined). From these facts,
we conclude that $(x^{-1})^* (U_q)$ and $(z^{-1})^* (U_r)$ (i) are solutions to (\ref{main_system})
in some domain $[0,t) \times Y \subseteq [0,\infty) \times \RR^3 $ containing the origin, and  (ii) take the same initial data on
$\{0\} \times Y$. 

We have shown that (\ref{main_system}) enjoys uniqueness and causality. Thus, considering possibly a smaller 
region that is globally hyperbolic for both $(x^{-1})^* (g_q)$ and $(z^{-1})^* (g_r)$,
we conclude that
 $(x^{-1})^* (U_q)=(z^{-1})^* (U_r)$, so that $U_q = (z^{-1} \circ x)^*(U_r)$, as desired.
\end{proof}

Using Proposition \ref{proposition_overlapping}, we can now identify overlapping globally hyperbolic developments, 
thus obtaining a globally hyperbolic development of $\cI$ as stated in Theorem \ref{main_theorem}. Causality follows essentially
from Lemma \ref{lemma_causality}: by the foregoing, we can assume that $M$ is diffeomorphic to $[0,\cT)\times \Si$ for some
$\cT>0$. Shrinking $\cT$ if necessary, we reduce the problem to local coordinates, in which case we can
employ wave coordinates. Causality, as stated in Theorem \ref{main_theorem}, 
is preserved by diffeomorphisms, thus the result follows from the causality of the reduced system guaranteed by Lemma 
\ref{lemma_causality}. This finishes the proof of Theorem \ref{main_theorem}.

\section{Proof of Theorem \ref{Minkowski_theorem}}

The proof Theorem \ref{Minkowski_theorem} is essentially contained in the above. In the case of a Minkowski
background, the system reduces to 
\begin{gather}
m(U,\partial) U = \mathfrak{q}(U),
\nonumber
\end{gather}
where $m$ is as in (\ref{matrix_M}), $U = (u^\be,\ep)$ and $\mathfrak{q}(U)$ is as in (\ref{main_system}) with the
appropriate changes for this $5\times 5$ system. The system can be analyzed as
in section \ref{section_solving_R4}. We can do this directly in $\RR^4$, without the complications
of constructing the initial data $\mathring{U}$. The characteristic determinant is given
by $p_1(\xi) p_2(\xi) p_3(\xi)$, where these polynomials are as before, with the simplification that now
we need not carry out any near-Minkowski arguments. Without the matrix $g^{\mu\nu}\partial^2_{\mu\nu}$ coming
from Einstein's equations, the Gevrey index of the system is $\frac{7}{6}$, and analogues of Proposition 
\ref{proposition_existence_R4} and Lemma \ref{lemma_causality} establish the result.

\section*{Acknowledgments} The author would like to thank the anonymous referees
for reading the manuscript carefully and suggesting several improvements.

\appendix

\section{Tools of weakly hyperbolic systems\label{section_appendix_Leray_Ohya}}

For the reader's convenience, we state in this appendix the results about Leray-Ohya systems
(sometimes called weakly hyperbolic systems)
that are used in the proof of Theorem \ref{main_theorem}. These results have been established by
Leray and Ohya in \cite{LerayOhyaLinear,LerayOhyaNonlinear} for the case of systems with diagonal 
principal part, and extended by Choquet-Bruhat in \cite{CB_diagonal} to more general systems.
These
works build upon the classical work of Leray on hyperbolic differential equations \cite{Leray_book_hyperbolic}. 
The reader can consult these references for the  proofs of the results stated below. Further discussion
can be found (without proofs) in \cite{ChoquetBruhatGRBook, DisconziCzubakNonzero,DisconziViscousFluidsNonlinearity}. 
Related results can also be found in \cite{MizohataCauchyProblem}.

We start by recalling some standard notions and fixing the notation that will
be used throughout. 
Given $T>0$, let 
$X = [0,T] \times  \RR^n$.
By  $\partial^k$ we shall denote any $k^{\text{th}}$ order derivative. We shall denote coordinates on $X$ by
$\{ x^\al \}_{\al = 0}^n$, thinking of $x^0 \equiv t$ as the time-variable.
We use the multi-index notation to write
\begin{gather}
\partial^\al 
\equiv
\frac{\partial^{|\al |}}{\partial x_0^{\al_0} \partial x_1^{\al_1} \partial x_2^{\al_2} \cdots \partial x_n^{\al_n} } 
\equiv \partial^{\al_0}_{x^0} \partial^{\al_1}_{x^1} \partial^{\al_2}_{x^2} \cdots \partial^{\al_n}_{x^n},
\nonumber
\end{gather}
where $|\al | = \al_0 +  \al_1 + \al_2 + \cdots + \al_n$.

\subsection{Gevrey spaces\label{section_appendix_Gevrey}}

In this section we review the definition of Gevrey spaces. Roughly speaking,
a function is of Gevrey class if it obeys inequalities similar, albeit weaker,
than those satisfied by analytic functions. One of the crucial properties of Gevrey spaces
for their use in general relativity is that they admit compactly supported functions.

\begin{definition} Let $s \geq 1$.
We say that $f:  \RR^n  \rar \CC$ belongs to the 
Gevrey space $G^{(s)}(\RR^n)$ if 
\begin{gather}
\sup_\al  \frac{1}{(1+|\al|)^s} \norm{\partial^\al f}_{L^2(\RR^n)}^\frac{1}{1+|\al|} 
< \infty.
\nonumber
\end{gather}
Let $K\subset \RR^n$ be the cube of unit side. We say that $f$ belongs to the local Gevrey space $G^{(s)}_{loc}(\RR^n)$ if 
\begin{gather}
\sup_\al  \frac{1}{(1+|\al|)^s} \left(\sup_K \norm{\partial^\al f}_{L^2(K)}\right)^\frac{1}{1+|\al |} 
< \infty,
\nonumber
\end{gather}
where $\sup_K$ is taken over all side one cubes $K$ in $\RR^n$.
\label{definition_Gevrey}
\end{definition}

We note that the case $s=1$, i.e., $G^{(1)}(\RR^n)$, corresponds to the space of analytic functions.

We next introduce the space of maps defined on $X$ whose derivatives up to order
$m$ belong to $G^{(s)}(\{x^0=t\})$, $0\leq t \leq T$.

\begin{definition}
On $X$, denote $S_t = \{ x^0 = t\}$. Let $s\geq 1$, and let $m\geq 0$ be an integer.
We denote by $\overline{\al}$ a multi-index $\al=(\al_0, \dots, \al_n)$ for which $\al_0 = 0$.
We define $G^{m,{(s)}}(X)$ as the set of maps $f:X \rar \CC$ such that
\begin{gather}
\sup_{\overline{\al}, \, |\beta| \leq m, \, 0 \leq t \leq T}
\frac{1}{( 1 + |\overline{\al} |)^s} \norm{\partial^{\be+\overline{\al}} f }_{L^2(S_t)}^\frac{1}{1+|\overline{\al}|} < \infty.
\nonumber
\end{gather}
Let $Y$ be an open set of $\RR^d$. We define $G^{m,{(s)}}(X\times Y)$ as the set of maps $f:X\times Y \rar \CC$ such that
\begin{gather}
\sup_{\overline{\al}, \, \ga, \, |\beta| \leq m, \, 0 \leq t \leq T}
\frac{1}{( 1 + |\overline{\al} | + |\ga| )^s} 
\norm{\sup_{y \in Y} \left| \partial_x^{\be+\overline{\al}} \partial_y^\ga f \right| }_{L^2(S_t)}^\frac{1}{1+|\overline{\al}| + |\ga|} < \infty.
\nonumber
\end{gather}
Let $K_t \subset S_t$ be the cube whose sides have unit length. 
The spaces $G^{m,{(s)}}_{loc}(X)$ and $G^{m,{(s)}}_{loc}(X\times Y)$ are defined as the set of maps $f:X \rar \CC$
and  $f:X\times Y \rar \CC$, respectively, such that
\begin{gather}
\sup_{\overline{\al}, \, |\beta| \leq m, \, 0 \leq t \leq T}
\frac{1}{( 1 + |\overline{\al} |)^s} \left(\sup_{K_t} \norm{\partial^{\be+\overline{\al}} f }_{L^2(K_t)} \right)^\frac{1}{1+|\overline{\al}|} < \infty,
\nonumber
\end{gather}
and
\begin{gather}
\sup_{\overline{\al}, \, \ga, \, |\beta| \leq m, \, 0 \leq t \leq T}
\frac{1}{( 1 + |\overline{\al} | + |\ga| )^s} 
\left( \sup_{K_t} \norm{\sup_{y \in Y} \left| \partial_x^{\be+\overline{\al}} \partial_y^\ga f \right| }_{L^2(K_t)} \right)^\frac{1}{1+|\overline{\al}| + |\ga|} < \infty,
\nonumber
\end{gather}
where $\sup_{K_t}$ is taken over all cubes of side one within $S_t$.
\label{definition_Gevrey_derivative}
\end{definition}

\begin{remark}
Definitions \ref{definition_Gevrey} and \ref{definition_Gevrey_derivative} are easily generalized
to vector and tensor fields in $\RR^n$ and $X$, and to open subsets of $\RR^n$ and $X$.
In particular, replacing $\RR^n$ by an open set $\Omega$ and $X$ by $[0,T]\times \Omega$
in the above definitions we obtain the corresponding spaces for $\Om$. This allows one to define
Gevrey spaces on manifolds. If $M$ is a differentiable manifold, we say that $f: M \rar \CC$
belongs to $G^{(s)}(M)$ if for every $p \in M$ there exists a coordinate chart $(x,U)$ about
$p$ such that $f\circ x^{-1} \in G^{(s)}(\Om)$, where $\Om = x(U)$.
This definition generalizes for vector and tensor fields.
\end{remark}

\begin{remark}
The reason to treat $X$ and $Y$ differently in definitions of $G^{(s)}(X\times Y)$ 
and $G^{m,{(s)}}(X\times Y)$ is that, in the theorems of section \ref{section_appendix_Cauchy},
 we need
to distinguish between the regularity with respect to the space-time $X$ and the regularity with respect to the parametrization of the initial data.
\end{remark}

\begin{remark}
We could similarly define for manifolds the analog of the other Gevrey spaces introduce above. 
However, this can be somewhat cumbersome and not always natural. In particular, the spaces
$G^{m,(s)}$ require a distinguished coordinate that plays the role of time. This can always be 
done locally, and it can in done for globally hyperbolic manifolds if we fix a particular 
foliation in terms of space-like slices (as done, e.g., in \cite{DisconziCzubakNonzero,DisconziViscousFluidsNonlinearity}), although it is debatable
how canonical this is. Here we prefer to avoid extra complications, i.e., we 
in fact only need the definition of $G^{(s)}(\Si)$, which is used
for the construction of appropriate local coordinates and the construction of the initial data
for the system in $\RR^4$ (sections \ref{section_initial_data} and \ref{section_initial_R4})
and in the results of section \ref{section_existence_causality_full}. The bulk of the proofs
are carried out for the system in $\RR^4$, where all the different Gevrey spaces play a role.
It follows that the solution in $\RR^4$ is in particular smooth, giving rise to a smooth
globally hyperbolic development. Note that for the conclusion
of Theorem \ref{main_theorem} it is not needed to assert that the full solution enjoys certain 
Gevrey regularity.
\end{remark}

For more about Gevrey spaces, see, e.g., \cite{LerayOhyaNonlinear,RodinoGevreyBook}.
We remark that the terminology ``local" and the notation $G_{loc}$ are not standard.

\subsection{The Cauchy problem\label{section_appendix_Cauchy}}

Let  $a = a(x,\partial^k)$, $x \in X$, be a linear differential operator of order $k$. We can write
\begin{gather}
a(x,\partial^k) = \sum_{ |\al | \leq k } a_\al(x) \partial^\al,
\nonumber
\end{gather}
where $\al = (\al_0, \al_1, \al_2, \dots, \al_n)$ is a multi-index.
Let $p(x,\partial^k)$ be the principal 
part of $a(x,\partial^k)$, i.e., 
\begin{gather}
p(x,\partial^k) = \sum_{ |\al | = k } a_\al(x) \partial^\al.
\nonumber
\end{gather}
At each point $x \in X$ and for each co-vector $\xi \in T_x^* X$, where $T^*X$ is the cotangent bundle of $X$,
we can associate a polynomial of order $k$ in the cotangent space
$T_x^* X$ obtained by replacing the derivatives by $\xi \in T_x^* X$. More precisely, for each 
$k^{\text{th}}$ order derivative in $a(x,\partial^k)$, i.e., 
\begin{gather}
\partial^\al 
=
\frac{\partial^{|\al |}}{\partial x_0^{\al_0} \partial x_1^{\al_1} \partial x_2^{\al_2} \cdots \partial x_n^{\al_n} } 
\nonumber
\end{gather}
$|\al | = k$,
we associate the polynomial
\begin{gather}
\xi^\al \equiv \xi_0^{\al_0} \xi_1^{\al_1} \xi_2^{\al_2} \cdots \xi_n^{\al_n},
\nonumber
\end{gather}
where $\xi = (\xi_0, \xi_1,\xi_2,\dots, \xi_n) \in T_x^* X$, forming in this way the polynomial
\begin{gather}
p(x,\xi) = \sum_{ |\al | = k } a_\al(x) \xi^\al.
\nonumber
\end{gather}
Clearly, $p(x,\xi)$ is a homogeneous polynomial of degree $k$. It is called the 
characteristic polynomial (at $x$) of the operator $a$.

The cone $V_x(p)$ of $p$
in $T_x^* X$ is defined by the equation
\begin{gather}
p(x,\xi) = 0.
\nonumber
\end{gather}
\begin{definition}
With the above notation, $p(x,\xi)$ is called a hyperbolic polynomial (at $x$) if there exists  $\zeta \in T_x^*X$ such 
that every straight line through $\zeta$ that does not contain the origin intersects 
the cone $V_x(p)$ at $k$ real distinct points.  The differential operator $a(x,\partial^k)$
is called a hyperbolic operator (at $x$) if $p(x,\xi)$ is hyperbolic.
\label{definition_hyperbolic_polynomial}
\end{definition}

Leray proved in
 \cite{Leray_book_hyperbolic}
 that  (if $X$ is at least three-dimensional)  if $p(x,\xi)$ is hyperbolic at $x$, then 
 the set of points $\zeta$ satisfying the condition of Definition \ref{definition_hyperbolic_polynomial}
forms the interior
of two opposite  half-cones $\Ga_x^{*,+}(a)$, $\Ga_x^{*,-}(a)$, with
$\Ga_x^{*,\pm}(a)$ non-empty, with boundaries that belong to 
$V_x(p)$ .

\begin{remark}
Another way of stating Definition \ref{definition_hyperbolic_polynomial} is as follows. Given
$\zeta \in T_x X$, consider a non-zero vector $\theta$ that is not parallel to $\zeta$ and
form the line $\lambda \zeta + \theta$, where $\lambda \in \RR$ is a parameter. We then require
this line to intersect the cone $V_x(p)$ at $k$ distinct real points.
An equivalent definition of hyperbolic polynomials is as follows \cite{Courant_and_Hilbert_book_2}:
$p(x,\xi)$ is hyperbolic at $x$ if for each non-zero $\xi=(\xi_0,\dots, \xi_n) \in T^*_x X$, the equation
$p(x,\xi)=0$ has $k$ distinct real roots $\xi_0 = \xi_0(\xi_1,\dots,\xi_n)$.
\end{remark}

With applications to systems in mind, we next consider the $N \times N$ diagonal linear differential operator matrix
\begin{gather}
A(x,\partial) = 
\left(
\begin{matrix}
a^1(x,\partial^{k_1}) & \cdots & 0 \\
\vdots & \ddots & \vdots \\
0 & \cdots & a^N(x,\partial^{k_N})
\end{matrix}
\right).
\nonumber
\end{gather}
Each $a^J(x,\partial^{k_J})$, $J = 1, \dots, N$ is a linear differential operator
of order $k_J$. 

\begin{definition}
The operator  $A(x,\partial)$ is called 
Leray-Ohya  hyperbolic (at $x$) if:

(i) The characteristic polynomial $p^J(x,\xi)$ of each $a^J(x,\partial^{k_J})$ is a product 
of  hyperbolic
polynomials, i.e.
\begin{gather}
p^J(x,\xi) = p^{J,1}(x, \xi) \cdots p^{J,r_J}(x,\xi),\, J=1,\dots, N,
\nonumber
\end{gather}
where each $p^{J,q}(x,\xi)$, $q=1,\dots,r_J$, $J=1,\dots,N$,
is a hyperbolic polynomial.

(ii) The two opposite
convex half-cones,
\begin{gather}
\Ga_x^{*,+}(A) = \bigcap_{J=1}^N \bigcap_{q=1}^{r_J} \Ga_x^{*,+}(a^{J,q}),
\, \text{ and } \,
\Ga_x^{*,-}(A) = \bigcap_{J=1}^N \bigcap_{q=1}^{r_J} \Ga_x^{*,-}(a^{J,q}),
\nonumber
\end{gather}
have a non-empty interior. Here, $\Ga_x^{*,\pm}(a^{J,q})$ are the
half-cones associated with the hyperbolic polynomials $p^{J,q}(x,\xi)$,
$q=1,\dots,r_J$,
$J=1,\dots,N$.
\label{definition_Leray_Ohya_hyperbolic}
\end{definition}

\begin{remark}
When the above hyperbolicity properties hold for every $x$, we call the corresponding
operators hyperbolic (we can also talk about hyperbolicity in an open set, a certain region, etc.).
When we say that an operator is Leray-Ohya hyperbolic on the whole space (or in an open set, etc.),
this means not only that Definition \ref{definition_Leray_Ohya_hyperbolic} applies for every
$x$, but also that the numbers $r_J$ and the degree of the polynomials 
$p^{J,q}(x,\xi)$, $q=1,\dots,r_J$, $J=1,\dots,N$, do not change with $x$.
\end{remark}

\begin{definition}
We define the dual convex half-cone $C_x^+(A)$ at $T_x X$ as the
set of $v \in T_x X$ such that $\xi(v) \geq 0$ for every
$\xi \in \Ga_x^{*,+}(A)$; $C_x^-(A)$ is analogously defined, and 
we set  $C_x(A) = C_x^+(A) \cup C_x^-(A)$. 
If the convex cones $C_x^+(A)$ and
$C_x^-(A)$ can be continuously distinguished with respect to $x \in X$,
then $X$ is called time-oriented (with respect to the hyperbolic form
provided by the operator $A$). A path in $X$ is called
future (past) time-like with respect to $A$ if its tangent at each point 
$x \in X$ belongs to $C_x^+(A)$ ($C_x^-(A)$), and future (past) 
causal if its tangent at each point  $x \in X$ belongs or is tangent 
to $C_x^+(A)$ ($C_x^-(A)$). A regular surface $\Si$ is 
called space-like with respect to $A$ if  $T_x\Si$ ($ \subset T_x X$) is
exterior to $C_x(A)$ for each $x \in \Si$.  It follows that for a time-oriented
$X$,
 the concepts of causal past, 
future, domains of dependence and influence of a set can be defined 
in the same way one does when the manifold is endowed with a Lorentzian 
metric. We refer the reader to \cite{Leray_book_hyperbolic} for details. Here we need only 
the following: the causal past $J^-(x)$ of a point $x \in X$ is the set of points that can
be joined to $x$ by a past causal curve. 
\label{definition_causal_A}
\end{definition}

\begin{remark}
The definitions in Definition \ref{definition_causal_A} endow $X$ with a
causal structure provided by 
the operator $A$.
Despite the similar terminology, however, it should be noticed that all of the above
definitions depend only on the structure of the operator $A$, and do not require
an a priori  Lorentzian metric on $X$. The case of interest in general relativity, however, is when
the causal structure of the space-time is connected with that of $A$. In this regard, the following observation is useful.
Suppose that $X$ has a Lorentzian metric $g$.
For causal solutions 
of the systems of equations here described (see Theorem \ref{Leray_Ohya_causality} below) to be causal in the sense 
of general relativity, one needs that, for all $x \in X$, $C_x^\pm(A) \subseteq K_x^\pm$, where 
$K_x^\pm$ are the two halves of the light-cone $\{ g_{\mu \nu} \xi^\mu \xi^\nu \leq 0 \}$.
By duality, this is equivalent to saying that in the cotangent spaces we have $K_x^{*,\pm} \subseteq \Ga_x^{*,+}(A) $,
where  $K_x^{*,\pm}$ are the two halves of the dual light-cone $\{ g^{\mu \nu} \xi_\mu \xi_\nu \leq 0 \}$.
\label{remark_causality_A_g}
\end{remark}

Next, we consider the following quasi-linear system of differential 
equations
\begin{gather}
A(x,U,\partial) U = B(x, U),
\label{quasi_linear_system}
\end{gather}
where $A(x,U, \partial)$ is the $N \times N$ diagonal matrix
\begin{gather}
A(x,U, \partial) = 
\left(
\begin{matrix}
a^1(x,U, \partial^{k_1}) & \cdots & 0 \\
\vdots & \ddots & \vdots \\
0 & \cdots & a^N(x,U, \partial^{k_N})
\end{matrix}
\right),
\nonumber
\end{gather}
with $a^J(x,U,\partial^{k_J})$, $J = 1, \dots, N$ differential operators
of order $k_J$. $B(x,U)$ is the vector
\begin{gather}
B(x,U) = (b^J(x,U)),\, J=1,\dots, N,
\nonumber
\end{gather}
and the vector 
\begin{gather}
U(x) = (U^I(x)), \, I = 1, \dots, N
\nonumber
\end{gather}
is the unknown. Notice that because $a_J$ is allowed to depend on $U$, the above system is in general 
non-linear.

\begin{definition}
The system $A(x,U,\partial) U = B(x, U)$ is called a 
Leray system
if it is possible to attach to each unknown $u^I$ an integer $m_I \geq 0$,
and to each equation $J$ of the system an integer $n_J \geq 0$,
such that:

(i) $k_J = m_J - n_J$, $J=1,\dots, N$;

(ii) the functions $b^J$ and the coefficients of the differential operators
$a^J$ are\footnote{The regularity required for the coefficients
$a^J$ and $b^J$ depends on particular applications and context. For instance,
for Theorem \ref{Leray_Ohya_theorem_existence} the required regularity
is specified. Similarly, in Definition \ref{definition_Cauchy_Leray}, one needs
to take derivatives of these quantities up to order $n_J$, thus they need to be
at least as many times differentiable.} functions of $x$, of $u^I$, and of the 
derivatives of $u^I$ of order at most $m_I - n_J - 1$, 
$I, J =1\dots, N$. If for some $I$ and some $J$, $m_I - n_J< 0$, then
the corresponding $a^J$ and $b^J$ do not depend on $u^I$.
\label{definition_Leray_system}
\end{definition}

\begin{remark} The indices $m_I$ and $n_J$ in Definition \ref{definition_Leray_system}
are defined up to an additive integer.
\end{remark}

\begin{definition}
A Leray-Ohya system (with diagonal principal part) is a Leray system where the matrix $A$ is Leray-Ohya
hyperbolic. In the quasi-linear case, since the operators $a$ depend on $U$,
we need to specify a function 
$U$ that  is  plugged into $A(x,U,\partial)$ in order to compute the characteristic polynomials. 
In this case we talk about
a Leray-Ohya system for the function $U$. The primary case of interest
is when $U$ assumes the values of the given Cauchy data.
\label{definition_Leray_Ohya_system}
\end{definition}

When considering a quasi-linear system, we write $p(x,U,\xi)$ and similar expressions to indicate 
the dependence on $U$. 
 
We now formulate the Cauchy problem for Leray systems.

\begin{definition}
Let $\Si$ be a regular hypersurface in $X$, which we assume for simplicity to be given by $\{ x^0 = 0 \}$. 
The Cauchy data on $\Si$ for a Leray 
system in $X$ consists of the values
of $U=(u^I)$ and their derivatives up to order $m_I - 1$ on $\Si$,
i.e., $\left. \partial^\al u^I \right|_{\Si}$, $|\al| \leq m_I -1$, $I=1,\dots, N$. 
The Cauchy data is required to satisfy the following compatibility conditions. If
$V=(v^I)$ is an extension of the Cauchy data defined in a neighborhood  
of $\Si$, i.e.
$\left. \partial^\al v^I \right|_{\Si} = \left. \partial^\al u^I \right|_{\Si}$, $|\al| \leq m_I -1$,
 $I=1,\dots, N$, 
then the difference $a^J(x,V, \partial)U - b^J(x,V)$
and its derivatives of order less than $n_J$ vanish on $\Si$, for
$J=1,\ldots, N$. When to a Leray system
 $A(x,U, \partial) U = B(x, U)$
we prescribe initial data satisfying these conditions, we say that we have 
a Cauchy problem for $A(x,U, \partial) U = B(x, U)$. 
\label{definition_Cauchy_Leray}
\end{definition}

Notice that by definition,
the Cauchy data for a Leray system satisfies the aforementioned compatibility
conditions. We also introduce the following notions related to the
Cauchy problem for a Leray system. 

\begin{assumption}
Consider the Cauchy problem for a Leray system $A(x,U,\partial) U = B(x,U)$. 
Let $Y$ be an open set of $\RR^L$, where $L$ equals the number of derivatives 
of $u^J$ of order less or equal to $\max_{I} m_I - n_J$,  $J=1,\dots,N$, and
such that $Y$ contains the closure of the values taken by the Cauchy data on $\Si$.
It is convenient to consider  $A(x,U,\partial)$ as a differential operator defined over $X \times Y$, 
as follows. We shall assume
that there exists a differential operator $\widetilde{A}(x,y,\partial)$
defined over $X\times Y$ with the following property.  If $(x,y) \in X\times Y$ and 
$V = (v^J)$ is a sufficiently regular
function on $\Si$ such that $y = (\partial^{\max_{I} m_I - n_J } v^J (x))_{J=1,\dots,N}$, 
then $A(x,V(x),\partial) = \widetilde{A}(x,y,\partial)$. We shall write $A(x,y,\partial)$ for 
$\widetilde{A}(x,y,\partial)$.
\label{assumption_structure_operators}
\end{assumption}

\begin{definition}
Consider the Cauchy problem for a Leray system $A(x,U,\partial) U = B(x,U)$.
Let $\Si$ and $Y$ be as in Definition \ref{definition_Cauchy_Leray} and
Assumption \ref{assumption_structure_operators}, respectively.
Denote by $\cA^s(\Si,I)$ the set of $V = (v^J) \in G^{(s)}(\Si)$, $J=1,\dots,N$,
such that $ (\partial^{\max_{I} m_I - n_J } v^J (x))_{J=1,\dots,N} \in Y$ for all $x \in \Si$.
\label{definition_A_s}
\end{definition}

We are now ready to state the results of this appendix.
We use the above notation and definitions in the statement of the theorems below. 
 
\begin{theorem} (Existence and uniqueness)
Consider the Cauchy problem for (\ref{quasi_linear_system}).  Suppose that the Cauchy data 
is in $G^{(s)}(\Si)$, and that 
\begin{gather}
a^J( \cdot,\cdot, \partial^{k_J}) \in G_{loc}^{n_J,(s)}(X \times Y), \, \text{ and } \, b^J(\cdot,\cdot) \in G^{n_J,(s)}(X \times Y).
\nonumber
\end{gather}
Suppose that for any $V \in \cA^s(\Si,Y)$
the system is Leray-Ohya hyperbolic with indices $m_I$ and $n_J$;
thus for all $x \in \Si$, each $p^J(x,V, \xi)$ is the product of 
$r_J$ hyperbolic polynomials,
\begin{gather}
p^J(x,V,\xi) = p^{J,1}(x, V,\xi) \cdots p^{J, r_J}(x, V, \xi),\, J=1,\dots, N.
\nonumber
\end{gather}
Suppose that each $p^{J,q+1}(x, V, \xi)$, $q=0, \dots, r_J-1$, depends on 
at most $m_I - m_{J,q} - r_I +q$ derivatives of $v^I$, $I=1,\dots,N$, where
\begin{gather}
m_{J,q} = n_J + \operatorname{deg}(p^{J,1}) + \cdots +\operatorname{deg}(p^{J,q}), \, m_{J, r_J} = m_J, \, m_{J,0} = n_J. 
\nonumber
\end{gather}
Above, $\operatorname{deg}(p^{J,q})$ is the degree, in $\xi$, of the polynomial 
$p^{J,q}(x, V,\xi)$. 

Denote by $a^J_{q+1}(x,y,\partial)$ the differential operator associated with 
$p^{J,q+1}$. Assume that
\begin{gather}
a^J_{q+1}(\cdot,\cdot,\partial) \in G_{loc}^{m_{J,q} - q, (s)}(X\times Y).
\nonumber
\end{gather}
Let $0 \leq g_I \leq r_I$ be the smallest integers such that $a^J(x,V,\partial^{m_J-n_J})$
and $b^J(x,V)$ depend on at most $m_I - n_J - r_I + g_I$ derivatives of 
$v^I$, $I=1,\dots, N$, $J=1,\dots, N$.
Finally, assume that 
\begin{gather}
1 \leq s \leq \frac{r_J}{g_J} \, \text{ and } \, \frac{n}{2} + r^J < n_J, \, J=1,\dots, N.
\nonumber
\end{gather}
Then, there exists a $T^\prime > 0$ and a solution $U=(u^I)$ to the Cauchy problem for (\ref{quasi_linear_system}) and defined on $[0,T^\prime) \times \RR^n \subseteq X$. The solution satisfies
\begin{gather}
u^I \in G^{m_I,(s)}([0,T^\prime) \times \RR^n), \, I=1,\dots, N.
\nonumber
\end{gather}
Furthermore, the solution is unique in this regularity class.
\label{Leray_Ohya_theorem_existence}
\end{theorem}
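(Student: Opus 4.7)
The plan is to prove Theorem~\ref{Leray_Ohya_theorem_existence} by a Picard-type iteration that at each step solves a \emph{linear} Leray-Ohya hyperbolic Cauchy problem, and to close the nonlinear iteration by a priori estimates in the Gevrey spaces $G^{m_I,(s)}$. The main technical input is a linear Gevrey energy estimate for operators whose characteristic polynomial factors into a product of hyperbolic polynomials; this is the substance of the Leray-Ohya machinery and is what forces the restrictions $s \leq r_J/g_J$ and $n/2 + r_J < n_J$.

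First, I linearize. Given any $V = (v^I) \in \cA^s(\Si, Y)$ extending the Cauchy data to a neighborhood of $\Si$, Assumption~\ref{assumption_structure_operators} lets me replace $A(x, U, \partial)$ with the linear operator $A(x, V(x), \partial)$, whose coefficients inherit $G^{n_J,(s)}_{loc}$ regularity from $V$. Because the Leray-Ohya hyperbolicity assumption is imposed for every $V \in \cA^s(\Si,Y)$, the frozen-coefficient linear problem
\begin{gather}
A(x, V, \partial) W = B(x, V), \qquad \left.\partial^{\al} w^I\right|_{\Si} = \left.\partial^{\al} u^I\right|_{\Si},\ |\al| \leq m_I - 1,
\nonumber
\end{gather}
is itself Leray-Ohya hyperbolic on a short slab $[0,T^\prime) \times \RR^n$ with $T^\prime$ depending only on the Gevrey norms of $V$ and the opening of the common convex cone $\bigcap_{J,q} \Ga_x^{*,+}(a^{J,q})$.

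Next I establish the linear Gevrey estimate. Since $A$ is diagonal the problem decouples into $N$ scalar equations, and for each $J$ the principal symbol factors as $p^J = p^{J,1} \cdots p^{J,r_J}$ with each $p^{J,q}$ hyperbolic. Introducing the auxiliary unknowns $w^J_{q} := (a^J_{q+1} \cdots a^J_{r_J}) w^J$ converts the scalar equation into a triangular first-order-in-type system whose diagonal blocks are strictly hyperbolic, to which the classical Leray $L^2$ energy method applies. Summing the resulting estimates in the Gevrey-weighted norm
\begin{gather}
\norm{w^J}_{G^{m_J,(s)}}
= \sup_{\overline{\al},\,|\be| \leq m_J,\,0\leq t \leq T^\prime}
\frac{\norm{\partial^{\be+\overline{\al}} w^J}_{L^2(S_t)}^{1/(1+|\overline{\al}|)}}{(1+|\overline{\al}|)^s}
\nonumber
\end{gather}
produces the desired a priori bound. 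The combinatorial bookkeeping of how many derivatives of $V$ appear in the coefficients of $a^{J,q}$ (controlled by the refined hypothesis on $a^J_{q+1}$) against how many derivatives fall on $w^J$ at each composition is exactly what forces $s \leq r_J/g_J$; the condition $n/2 + r_J < n_J$ is needed so that products of Gevrey functions can be controlled via Sobolev embedding on each slice $S_t$ uniformly in the number of factors introduced by the successive reductions.

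Finally, I set up the fixed point: let $\Phi(V) := W$, and shrink $T^\prime$ so that $\Phi$ maps a closed ball of $\cA^s(\Si,Y)$ (viewed as a subset of $G^{m_I,(s)}([0,T^\prime)\times\RR^n)$) into itself. Applying the same linear estimate to the difference $\Phi(V_1) - \Phi(V_2)$, which satisfies a linear Leray-Ohya system with source controlled by $V_1 - V_2$, shows $\Phi$ is a contraction for $T^\prime$ small; its unique fixed point is the desired solution $U$. Uniqueness at the nonlinear level follows by subtracting two putative solutions and invoking the same linear uniqueness. \textbf{The hard part will be Step 2}: weakly hyperbolic equations of the form $(a_1 \cdots a_r) w = f$ genuinely lose regularity at each composition, and obtaining the sharp endpoint estimate $s \leq r_J/g_J$ requires the delicate Leray-Ohya pseudo-differential weight construction of \cite{LerayOhyaNonlinear, CB_diagonal}, coupled with precise derivative accounting against the Leray indices $m_I, n_J$; once that estimate is in hand, the linearization and fixed-point steps are standard.
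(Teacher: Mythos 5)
You should first be aware that the paper does not prove Theorem \ref{Leray_Ohya_theorem_existence} at all: it is quoted verbatim from the Leray--Ohya theory, with the paper stating that Theorems \ref{Leray_Ohya_theorem_existence} and \ref{Leray_Ohya_causality} are proven in \cite{LerayOhyaNonlinear}, and adding only the remark that the linear problem is handled by a method of majorants reminiscent of Cauchy--Kowalevskaya (formal series expansions adapted to Gevrey functions) and the nonlinear problem by a fixed-point argument over successive linear problems. Your outer architecture (freeze coefficients at $V \in \cA^s(\Si,Y)$, solve a linear problem, contract) is consistent with that remark, and your uniqueness step is fine modulo the linear theory.

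The genuine gap is in your Step 2, which is the entire content of the theorem. The factorization $p^J = p^{J,1}\cdots p^{J,r_J}$ holds only at the level of principal symbols: $a^J$ is not the composition $a^J_1\circ\cdots\circ a^J_{r_J}$, and the discrepancy consists of lower-order terms that cannot be absorbed without losing derivatives relative to what the energies of the individual factors control. Consequently, setting $w^J_q := (a^J_{q+1}\cdots a^J_{r_J})w^J$ does not yield a triangular system with strictly hyperbolic diagonal blocks and harmless couplings to which ``the classical Leray $L^2$ energy method applies''; each level of the reduction loses a fixed number of derivatives, and no finite-order energy argument closes. If your reduction worked as stated it would give Sobolev (indeed $C^\infty$) well-posedness, which is false for weakly hyperbolic operators of this type and would render the Gevrey setting and the threshold $s \leq r_J/g_J$ superfluous; that threshold arises precisely from summing the per-order losses over derivatives of all orders against the Gevrey weights, which in \cite{LerayOhyaNonlinear} is done by the majorant/formal-series construction rather than by an energy reduction to strictly hyperbolic pieces (the ``energy-type'' estimates of Leray--Ohya are Gevrey-weighted and structurally different from the strictly hyperbolic $L^2$ theory). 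Your closing concession that the sharp estimate ``requires the delicate Leray--Ohya construction of \cite{LerayOhyaNonlinear, CB_diagonal}'' means your proposal ultimately re-cites the same source the paper cites; that is a legitimate way to use the theorem, but it is not an independent proof, and the mechanism you sketch for the key linear estimate is not a correct account of how that estimate is actually obtained. Your reading of the condition $\frac{n}{2}+r_J<n_J$ as a Sobolev-embedding requirement on slices is plausible but likewise unsubstantiated by the argument you give.
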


\begin{theorem} (Causality) Assume the same hypotheses
of Theorem \ref{Leray_Ohya_theorem_existence}, and suppose further that 
\begin{gather}
1 \leq s < \frac{r_J}{g_J}, \, J=1,\dots, N.
\nonumber
\end{gather}
Let $T^\prime$ and $U$ be as in the conclusion of  Theorem \ref{Leray_Ohya_theorem_existence}. 
Then, if $T^\prime$ is sufficiently small, the operator $A(x,U,\partial)$
is Leray-Ohya hyperbolic (thus the causal past of a point is well-defined),
and for each $x \in [0,T^\prime) \times \RR^n$, $U(x)$ depends only on
$\left. U_0 \right|_{J^-(x) \cap \Si}$, where $U_0$ is the Cauchy data.
\label{Leray_Ohya_causality}
\end{theorem}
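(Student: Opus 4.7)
The plan is to mirror the proof of Theorem \ref{main_theorem} but without the coupling to Einstein's equations, working globally on $\RR^4$ from the start so that analyticity ($s=1$) is permissible. The unknown is $U = (u^\be, \ep)$ and the principal part is the $5\times 5$ block $m(U,\partial)$ appearing in (\ref{matrix_M}); the system is $m(U,\partial) U = \mathfrak{q}(U)$, with $\mathfrak{q}$ the obvious restriction of the right-hand side of (\ref{main_system}) to the fluid equations. Assigning $m_I = 2$ and $n_J = 0$ for $I,J = 1,\ldots,5$ makes this a Leray system in the sense of Definition \ref{definition_Leray_system}.

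First I would extend the given data on $\RR^3$ to a complete set of Cauchy data on $\{x^0 = 0\}$: the formulas from section \ref{section_initial_data} determine $u^0(0,\cdot)$ and $\partial_0 u^0(0,\cdot)$ from $v_0$, $v_1$ and the Minkowski metric (with $(g_0)_{ij}$ replaced by $\delta_{ij}$ and all $\partial_0 g_{\al\be}$ terms dropped), and the remaining higher transversal derivatives needed to initialize the diagonalized system are computed inductively from the equations in the same way as in section \ref{section_solving_R4}. Crucially, because no bump functions are required (we already live on $\RR^3$ and all data is Gevrey regular there), the argument is not obstructed by the non-existence of compactly supported analytic cut-offs, and $s=1$ is admissible. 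The hypothesis $\ep_0 \geq C_0 > 0$ replaces the compactness-plus-positivity argument of section \ref{section_initial_R4}, ensuring positivity of $\ep$ along the evolution.

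Next I would compute the characteristic determinant of $m(U,\partial)$. Since there is no Einstein block, the factor $p_4(\xi) = (\xi^\mu \xi_\mu)^{10}$ from (\ref{p4}) disappears and one obtains $P(U,\xi) = p_1(U,\xi)\, p_2(U,\xi)\, p_3(U,\xi)$ with $p_1,p_2,p_3$ exactly as in (\ref{p1})--(\ref{p3}). The hyperbolicity analysis for $p_1, p_2, p_3$ carried out in the proof of Proposition \ref{proposition_existence_R4} now applies with the metric being \emph{exactly} Minkowski rather than a perturbation, so the near-Minkowski continuity arguments can be dropped: hyperbolicity is immediate from $\ep > 0$, the timelike character of $u$, and the condition $a_2 \geq 4$ used to produce two real distinct roots of $\widetilde{p}_2$ and $p_3$ via Cauchy--Schwarz. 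Counting factors yields four degree-one and three degree-two hyperbolic polynomials, for a total of seven hyperbolic factors in a characteristic determinant of degree ten; the Gevrey index is therefore $7/6$, matching the hypothesis $s < 7/6$.

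With these ingredients, applying Theorem \ref{Leray_Ohya_diagonal} to diagonalize and then Theorem \ref{Leray_Ohya_theorem_existence} yields a solution in $G^{2,(s)}([0,\cT)\times\RR^3)$, unique in this class. Preservation of the normalization $u^\la u_\la = -1$ is recovered by the same scalar-equation argument as in Lemma \ref{lemma_normalization_u}, applied directly in $\RR^4$. Finally, causality follows from Theorem \ref{Leray_Ohya_causality}: one must check that for each $x$ the two halves $K^{*,\pm}(x)$ of the Minkowski dual light cone sit inside every $\Ga^{*,\pm}_i(x)$, and here the cone inclusion reduces to the slope calculation $-1 < s_\pm(u,\theta) < 1$ from the proof of Lemma \ref{lemma_causality}, now valid without any smallness of $\cT$ since the metric is genuinely Minkowski. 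I expect the only non-routine point to be the bookkeeping required to descend from the diagonalized system back to $m(U,\partial) U = \mathfrak{q}(U)$ while retaining the full Gevrey class $s \geq 1$; this is handled, as in Proposition \ref{proposition_existence_R4}, by approximating the initial data by analytic data, invoking the analytic-solution construction of Remark \ref{remark_solution_analytic}, and passing to the limit using the Leray--Ohya energy estimates.
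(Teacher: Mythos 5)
There is a fundamental mismatch between what you proved and what was asked. The statement in question is Theorem \ref{Leray_Ohya_causality}, the \emph{abstract} causality result for general quasi-linear Leray--Ohya systems with diagonal principal part: under the hypotheses of Theorem \ref{Leray_Ohya_theorem_existence} and the strict inequality $1 \leq s < r_J/g_J$, the operator $A(x,U,\partial)$ is Leray--Ohya hyperbolic along the solution for $T^\prime$ small, and $U(x)$ depends only on $\left. U_0 \right|_{J^-(x)\cap \Si}$. Your proposal instead sketches a proof of Theorem \ref{Minkowski_theorem} (existence, uniqueness and causality for the conformal fluid on a Minkowski background): you set up the $5\times 5$ system $m(U,\partial)U=\mathfrak{q}(U)$, factor the characteristic determinant as $p_1p_2p_3$, compute the Gevrey index $7/6$, and so on. Worse, in your final step you write that ``causality follows from Theorem \ref{Leray_Ohya_causality}'' — i.e., you invoke precisely the statement you were asked to prove, so as a proof of that statement the argument is circular and contains no content toward it.

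For the record, the paper does not prove this theorem either: it is quoted from Leray and Ohya \cite{LerayOhyaNonlinear}, with the remark that the proof proceeds by a Holmgren-type argument on top of the existence/uniqueness machinery. A genuine (even sketched) proof would have to operate at the level of the general system (\ref{quasi_linear_system}): first, show that for $T^\prime$ sufficiently small the values of $U$ remain in the neighborhood $Y$ of the Cauchy data (continuity in $x^0$), so that the hyperbolicity hypothesis made for $V \in \cA^s(\Si,Y)$ transfers to $A(x,U,\partial)$, making the cones $\Ga_x^{*,\pm}(A)$ and hence $J^-(x)$ well defined; second, establish the domain-of-dependence property itself, e.g.\ by comparing two solutions whose data agree on $J^-(x)\cap\Si$ and running the Gevrey energy estimates (or a Holmgren/duality argument) on lens-shaped regions whose lateral boundaries are space-like for all the half-cones $\Ga_x^{*,\pm}(a^{J,q})$, using uniqueness in the class $G^{m_I,(s)}$ under the strict inequality $s < r_J/g_J$. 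None of these steps appears in your write-up, so the proposal does not address the statement.
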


\begin{remark}
Theorem \ref{Leray_Ohya_theorem_existence} assumes that the system is Leray-Ohya hyperbolic
for $V \in \cA(\Si,Y)$, which is essentially the space of values near the initial data.
(Naturally, it would not make sense to require the system to be Leray-Ohya hyperbolic
for the yet to be proven to exist solution $U$.) Once $U$ is constructed, one can then
ask whether the system is Leray-Ohya hyperbolic for $U$. This will be the case if $T^\prime$
is small, since in this case the values of $U$ will be close to those of the initial data
by continuity, guaranteeing that $U(x) \in \cA(\Si,Y)$.
\end{remark}

Theorems \ref{Leray_Ohya_theorem_existence} and \ref{Leray_Ohya_causality}
are proven in \cite{LerayOhyaNonlinear} (reprinted in \cite{LerayOhyaNonlinearReprint}).

We now consider a system whose principal part is not necessarily diagonal.
The definition of a Leray system depends only on the existence of the indices $m_I$ and
$n_J$ with the stated properties, and thus can be extended to non-diagonal systems.

\begin{definition}
Consider a system of $N$ partial differential 
equations and $N$ unknowns in $X$,
and
denote the unknown as 
$U=(u^I)$, $I=1,\dots, N$. 
The system is a (not necessarily diagonal in the principal part) Leray system if
it is possible 
to attach to each unknown $u^I$ a non-negative integer $m_I$ and to
each equation a non-negative integer $n_J$, such that the system reads
\begin{gather}
h^J_I(x,\partial^{m_K - n_J -1} u^K, \partial^{m_I - n_J}) u^I
+ b^J(x, \partial^{m_K - n_J - 1} u^K) = 0, \, J=1, \dots, N.
\label{general_system}
\end{gather}
Here, 
$h^J_I(x,\partial^{m_K - n_J -1} u^K, \partial^{m_I - n_J})$
is a homogeneous differential operator of order $m_I - n_J$ (which can
be zero), whose coefficients depend on at most
$m_K - n_J -1$ derivatives of $u^K$, $K=1,\dots N$, and there is a sum over $I$ in $h^J_I(\cdot) u^I$. 
The remaining terms, 
$b^J(x,\partial^{m_K - n_J - 1} u^K)$, also depend on at most 
$m_K - n_J -1$ derivatives of $u^K$,  $K=1,\dots N$. 
As before, these indices are defined only up to an
overall additive integer.
\end{definition}

As done above, for a given sufficiently regular $U$,   
 $h^J_I(x,\partial^{m_K - n_J -1} U^K, \partial^{m_I - n_J})$  are well-defined linear
 operators, and we can ask about their hyperbolicity properties. The case of interest will be, again, when
 we evaluate these operators at some given Cauchy data.
 
Write (\ref{general_system}) in matrix form as
 \begin{gather}
 H(x,U,\partial) U = B(x,U).
 \label{general_system_matrix}
\end{gather}
 
\begin{definition} 
The characteristic determinant of (\ref{general_system_matrix}) 
at $x \in X$ and for a given $U$
 is the polynomial $p(x,\xi)$ in the co-tangent space $T^*_x X$, $\xi \in T_x^* X$, 
given by 
\begin{gather}
p(x,U,\xi) = \det( H(x,U,\xi)).
\label{characteristic_det_appendix}
\end{gather}
\end{definition}
Note that $p$ is a homogeneous polynomial of degree
\begin{gather}
 \ell \equiv \sum_{I=1}^N m_I - \sum_{J=1}^N n_J.
\nonumber
\end{gather}
Under appropriate conditions, (\ref{general_system_matrix}) can be transformed 
into a Leray-Ohya system of the form (\ref{quasi_linear_system}), i.e., with diagonal principal part.
More precisely, we have the following.

\begin{theorem} (Diagonalization)
Consider (\ref{general_system_matrix}).
Suppose that the characteristic determinant (\ref{characteristic_det_appendix}) at a given $U$ is not identically zero, and it is the product
of $Q$ hyperbolic polynomials, i.e., 
\begin{gather}
p(x,U,\xi) = p_1(x,U,\xi) \cdots p_Q(x,U,\xi).
\nonumber
\end{gather}
Let $d_q$ be the degree of $p_q(x,U,\xi)$, $q=1,\dots, Q$, and suppose
that 
\begin{gather}
\max_q d_q \geq \max_I m_I - \min_J n_J.
\nonumber
\end{gather}
Finally, assume that 
\begin{gather}
\ell \geq \max_I m_I - \min_J n_J.
\nonumber
\end{gather}
Then, there exists a $N\times N$ matrix $C(x,U,\partial)$ of differential operators whose coefficients depend on $U$, such that
\begin{gather}
C(x,U,\partial) H(x,U,\partial) U =\mathbb{I} \, p(x,U,\partial) U + \widetilde{B}_1(x,U),
\nonumber
\end{gather}
and 
\begin{gather}
C(x,U,\partial) B(x,U) = \widetilde{B}_2(x,U),
\nonumber
\end{gather}
where $\mathbb{I}$ is the $N\times N$ identity matrix, $p(x,U,\partial)$ is the differential operator associated with 
$p(x,U,\xi)$, and $\widetilde{B}_1(x,U)$ and $ \widetilde{B}_2(x,U)$ depend on at most $\ell -1 $ derivatives of $U$, as do the coefficients
of the operator $p(x,U,\xi)$. Furthermore, there is a choice of indices that makes the system
\begin{gather}
\mathbb{I} \, p(x,U,\partial) U =  \widetilde{B}_2(x,U) - \widetilde{B}_1(x,U)
\label{diagonalized_Leray}
\end{gather}
into a Leray system. In particular,
if the intersections $\cap_q \Ga_x^{*,+}(a^q)$ and $\cap_q \Ga_x^{*,-}(a^q)$, where
$\Ga_x^{*,\pm}(a^q)$ are the half-cones associated with the hyperbolic polynomials $p_q(x,U,\xi)$,
have non-empty interiors, then 
 (\ref{diagonalized_Leray}) is a Leray-Ohya system with diagonal principal part 
in the sense of definition \ref{definition_Leray_Ohya_system}.
\label{Leray_Ohya_diagonal}
\end{theorem}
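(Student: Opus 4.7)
The plan is to use the classical adjugate matrix construction, adapted to matrices of differential operators, as the guiding principle. Recall that for any $N\times N$ matrix $M$ of scalars one has $\mathrm{adj}(M)\,M = \det(M)\,\mathbb{I}$. I will define $C(x,U,\partial)$ to be the differential operator whose (full) symbol is, roughly, the cofactor matrix of $H(x,U,\xi)$; concretely, for each pair $(I,J)$ the entry $C_{IJ}(x,U,\partial)$ is the differential operator obtained from $(-1)^{I+J}$ times the determinant of the $(J,I)$-minor of the matrix of symbols $H(x,U,\xi)$, with the variables $\xi$ replaced by partial derivatives after the coefficients (a standard left-quantization convention). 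Since entries of $H(x,U,\xi)$ are polynomials in $\xi$ of degree $m_I-n_J$, the entry $C_{IJ}(x,U,\xi)$ is a polynomial in $\xi$ of degree $\ell - (m_J - n_I)$, so the resulting differential operator has the right order for Leray bookkeeping.

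The main computation is the product $C(x,U,\partial)\,H(x,U,\partial)$. At the level of principal symbols, the calculus of differential operators gives $\sigma(CH)(\xi) = \mathrm{adj}(H(x,U,\xi))\,H(x,U,\xi) = p(x,U,\xi)\,\mathbb{I}$, which is the desired diagonal top order part. The error terms come from the Leibniz rule: when composing two differential operators with variable coefficients, $(a(x)\partial^\alpha)(b(x)\partial^\beta) = a(x)b(x)\partial^{\alpha+\beta} + \text{(lower order)}$, where the lower order pieces involve derivatives of $b$ (equivalently, of the coefficients of $H$, hence of $U$). An explicit count shows every one of these commutator corrections, together with their remaining $\partial$'s, touches $U$ at most $\ell-1$ times; bundling them into $\widetilde{B}_1(x,U)$ yields the first identity. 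Applying the same $C(x,U,\partial)$ to $B(x,U)$ and again tracking orders gives $\widetilde{B}_2(x,U)$, also depending on at most $\ell-1$ derivatives of $U$.

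The Leray-system assertion for (\ref{diagonalized_Leray}) reduces to exhibiting integer indices $\widetilde m_I$, $\widetilde n_J$ with $\widetilde m_I - \widetilde n_J = \ell$ whenever $I=J$ and with the coefficients of the operator $p(x,U,\partial)$ and the functions $\widetilde B_1,\widetilde B_2$ depending on at most $\max_I \widetilde m_I - \widetilde n_J - 1 \geq \ell -1 $ derivatives of $U$. The simplest choice is $\widetilde m_I = \max_I m_I$ and $\widetilde n_J = \max_I m_I - \ell$ for all $I,J$; this is well-defined precisely because the two hypotheses $\ell \geq \max_I m_I - \min_J n_J$ and $\max_q d_q \geq \max_I m_I - \min_J n_J$ ensure nonnegativity of $\widetilde n_J$ and the dominance of $\ell$ over the individual $m_I-n_J$ needed so that all compositional remainders fit within the budget of $\ell-1$ derivatives. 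The final Leray-Ohya conclusion is immediate from Definition \ref{definition_Leray_Ohya_hyperbolic}: the diagonal entries all equal $p=p_1\cdots p_Q$, each $p_q$ is hyperbolic by assumption, and the cones $\Gamma_x^{*,\pm}(a^q)$ have non-empty intersections by hypothesis.

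The main obstacle I expect is the bookkeeping in the second step — translating the scalar adjugate identity into operator form requires a careful order count, since each application of Leibniz loses one order but may spread derivatives across different unknowns, and one must confirm that no remainder exceeds the $\ell-1$ derivative budget for $U$. Once that count is done uniformly across $(I,J)$, the Leray-index assignment above absorbs every term and the rest of the theorem is structural. Nowhere do we use hyperbolicity of the individual $p_q$ before the final sentence, so the entire diagonalization is a purely algebraic / symbolic-calculus statement, with hyperbolicity needed only to pass from (\ref{diagonalized_Leray}) being a Leray system to its being a Leray-Ohya system.
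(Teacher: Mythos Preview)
Your proposal is correct and follows essentially the same approach as the paper. The paper does not give a full proof of this theorem but cites \cite{CB_diagonal} and provides only a brief sketch based on the cofactor identity $c^T a = \det(a)\,\mathbb{I}$, which is precisely your adjugate construction; your write-up supplies the derivative bookkeeping and index assignment that the paper leaves implicit.
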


Theorem \ref{Leray_Ohya_diagonal} is proven in \cite{CB_diagonal}.

\begin{definition}
Under the hypotheses of Theorem \ref{Leray_Ohya_diagonal}, the number $\frac{Q}{Q-1}$ is 
called the Gevrey index of the system.  
\end{definition}

\begin{remark}
Suppose that (\ref{diagonalized_Leray}) forms a Leray-Ohya system in the sense
of definition \ref{definition_Leray_Ohya_system}, i.e., the half-cones have non-empty interiors
as stated in Theorem \ref{Leray_Ohya_diagonal}. 
It can then be shown \cite{CB_diagonal} that a value of $s$ sufficient to apply
Theorems \ref{Leray_Ohya_theorem_existence} and \ref{Leray_Ohya_causality}
is $1 \leq s < \frac{Q}{Q-1}$.
\label{remark_Gevrey_index}
\end{remark}

Let us make a brief comment about the proofs of the above results. Theorem \ref{Leray_Ohya_theorem_existence}
is proven as follows. First, one solves the associated linear problem. This is done by a method of majorants reminiscent 
of the Cauchy-Kowalevskaya theorem. One uses the fact that Gevrey functions admit a formal series expansion that provides
a consistent way of constructing successive approximating solutions to the problem. The non-linear problem is then treated
via a fixed point argument, upon solving successive linear problems. Theorem \ref{Leray_Ohya_causality} is obtained
by a Holmgren type of argument. We remark that the assumption that 
$p^{J,q+1}(x,V, \xi)$, $q=0, \dots, r_J-1$, depends on 
at most $m_I - m_{J,q} - r_I +q$ derivatives of $v^I$, $I=1,\dots,N$,
ensures that the coefficients of the associated differential operators 
$a^{J,q+1}(x,U, \partial)$ do not depend on too many derivatives of $U$,
as it should be in the treatment of quasi-linear equations.

Theorem \ref{Leray_Ohya_diagonal} is based on the following identity:
\begin{gather}
c^T a = \det(a),
\end{gather}
where $a$ is an $N\times N$ invertible matrix and $c^T$ the transpose of the co-factor matrix. At the level of differential operators,
this identity produces the lower order terms $\widetilde{B}_1$. One then needs to match the order of the resulting differential operators
and lower order terms with appropriate indices satisfying the definition of a Leray system. This is possible under the conditions
on $d_q$ and $\ell$ stated in the theorem.

\section{Derivation of the equations of motion}
\label{appendix_derivation}

In this section we give the derivation of 
(\ref{EE_gauge}) and 
(\ref{div_T_gauge}). The derivation of (\ref{EE_gauge}) is standard and we include
it here for the reader's convenience, thus let us start with (\ref{EE_gauge}).
Let 
\begin{gather}
{}^{(0)}t_{\al\be} = \frac{4}{3} u_\al u_\be +\frac{1}{3} g_{\al\be} \ep,
\label{ideal_tensor}
\end{gather}
and denote the third to ninth terms in \eqref{conformal_tensor} by 
${}^{(1)}t_{\al\be}$ to ${}^{(7)}t_{\al\be}$, respectively. 
Explicitly,
\begin{align}
\begin{split}
{}^{(1)}t_{\al\be} &= -\eta \pi_\al^\mu \pi_\be^\nu (\nabla_\mu u_\nu + \nabla_\nu u_\mu 
-\frac{2}{3} g_{\mu\nu} \nabla_\la u^\la ),
\\
{}^{(2)}t_{\al\be} & = \la (u_\al u^\mu \nabla_\mu u_\be + u_\be u^\mu \nabla_\mu u_\al) ,
\\
{}^{(3)}t_{\al\be} & = \frac{1}{3} \chi \pi_{\al\be} \nabla_\mu u^\mu,
\\
{}^{(4)}t_{\al\be} &=   \chi u_\al u_\be \nabla_\mu u^\mu ,
\\
{}^{(5)}t_{\al\be}&= \frac{\la}{4 \ep }(u_\al \pi^\mu_\be \nabla_\mu \ep + u_\be \pi^\mu_\al \nabla_\mu \ep) ,
\\
{}^{(6)}t_{\al\be} & = 
\frac{3 \chi}{4 \ep} u_\al u_\be u^\mu\nabla_\mu \ep,
\\
{}^{(7)}t_{\al\be} & =   \frac{\chi}{4 \ep}
\pi_{\al\be} u^\mu \nabla_\mu \ep,
\end{split}
\nonumber
\end{align}
so that 
\begin{gather}
T_{\al\be} = {}^{(0)}t_{\al\be} + {}^{(1)}t_{\al\be} + \cdots {}^{(7)}t_{\al\be}.
\nonumber
\end{gather}

\subsection{Calculation of $\nabla_\al {}^{(1)}t^\al_\be$}

We have 
\begin{align}
\begin{split}
\nabla_\al {}^{(1)}t^\al_\be & = -\eta \pi^{\al \mu} \pi_\be^\nu(\nabla_\al \nabla_\mu u_\nu
+ \nabla_\al \nabla_\nu u_\mu  -\frac{2}{3} g_{\mu\nu} \nabla_\al \nabla_\la u^\la )
\\ &
+ \nabla_\al (\eta \pi^{\al\mu} \pi^\nu_\be )(\nabla_\mu u_\nu + \nabla_\nu u_\mu 
-\frac{2}{3} g_{\mu\nu} \nabla_\la u^\la ).
\end{split}
\label{1.1}
\end{align}
Compute
\begin{align}
\begin{split}
\pi_\be^\nu\nabla_\al \nabla_\mu u_\nu & = (g^\nu_\be + u_\be u^\nu) \nabla_\al \nabla_\mu u_\nu 
= \nabla_\al \nabla_\mu u_\be + u_\be u^\nu \nabla_\al \nabla_\mu u_\nu
\\ &
=  \nabla_\al \nabla_\mu u_\be + u_\be \nabla_\al (u^\nu \nabla_\mu u_\nu)
- u_\be \nabla_\al u^\nu \nabla_\mu u_\nu 
\\ &
= \nabla_\al \nabla_\mu u_\be 
- u_\be \nabla_\al u^\nu \nabla_\mu u_\nu ,
\end{split}
\nonumber
\end{align}
so that 
\begin{align}
\begin{split}
-\eta \pi^{\al\mu} \pi_\be^\nu \nabla_\al \nabla_\mu u_\nu & = - \eta \pi^{\al\mu} 
(\nabla_\al \nabla_\mu u_\be -\nabla_\al u^\nu \nabla_\mu u_\nu )
\\ &
= -\eta (g^{\al \mu} + u^\al u^\mu ) \nabla_\al \nabla_\mu u_\be 
+\eta \pi^{\al\mu}\nabla_\al u^\nu \nabla_\mu u_\nu 
\\ & 
= -\eta g^{\al \mu} \nabla_\al \nabla_\mu u_\be + u^\al u^\mu  \nabla_\al \nabla_\mu u_\be 
+\eta \pi^{\al\mu}\nabla_\al u^\nu \nabla_\mu u_\nu.
\end{split}
\label{2.1}
\end{align}
Similarly, we find
\begin{align}
\begin{split}
\pi^{\al\mu} \nabla_\al \nabla_\nu u_\mu & = 
(g^{\al\mu} + u^\al u^\mu)\nabla_\al \nabla_\nu u_\mu = 
g^{\al\mu} \nabla_\al \nabla_\nu u_\mu + u^\al u^\mu \nabla_\al \nabla_\nu u_\mu
\\ &
=\nabla_\al \nabla_\nu u^\al - u^\al \nabla_\al u^\mu \nabla_\nu u_\mu,
\end{split}
\nonumber
\end{align}
so that 
\begin{align}
\begin{split}
-\eta \pi^{\al\mu} \pi^\nu_\be \nabla_\al \nabla_\nu u_\mu 
& = -\eta \pi_\be^\nu (\nabla_\al \nabla_\nu u^\al - u^\al \nabla_\al u^\mu \nabla_\nu u_\mu)
\\ &
= -\eta g^\nu_\be \nabla_\al \nabla_\nu u^\al  - \eta u_\be u^\nu \nabla_\al \nabla_\nu u^\al 
+\eta \pi^\nu_\be u^\al \nabla_\al u^\mu \nabla_\nu u_\mu .
\end{split}
\label{2.2}
\end{align}
But 
\begin{align}
\begin{split}
\nabla_\al \nabla_\nu u^\al = \nabla_\nu \nabla_\al u^\al + R_{\nu\al } u^\al,
\end{split}
\nonumber
\end{align}
so that \eqref{2.2} becomes
\begin{align}
\begin{split}
-\eta \pi^{\al\mu} \pi^\nu_\be \nabla_\al \nabla_\nu u_\mu 
& = 
-\eta g^\nu_\be (\nabla_\nu \nabla_\al u^\al + R_{\nu\al } u^\al) - \eta u_\be u^\nu 
(\nabla_\nu \nabla_\al u^\al + R_{\nu\al } u^\al)
\\ &
+\eta \pi^\nu_\be u^\al \nabla_\al u^\mu \nabla_\nu u_\mu
\\ &
= 
-\eta g^\nu_\be \nabla_\nu \nabla_\al u^\al 
-\eta g^\nu_\be R_{\nu\al } u^\al 
- \eta u_\be u^\nu \nabla_\nu \nabla_\al u^\al 
\\ &
- \eta u_\be u^\nu  R_{\nu\al } u^\al
+\eta \pi^\nu_\be u^\al \nabla_\al u^\mu \nabla_\nu u_\mu.
\end{split}
\label{3.1}
\end{align}
Next compute
\begin{align}
\begin{split}
-\eta \pi^{\al\mu} \pi_\be^\nu (-\frac{2}{3} g_{\mu\nu} \nabla_\al \nabla_\la u^\la  )
& =
\frac{2}{3}\eta \pi^{\al\mu} \pi_{\be\mu} \nabla_\al \nabla_\la u^\la  
\\
&
= \frac{2}{3}\eta \pi_\be^\al   \nabla_\al \nabla_\la u^\la  
=  \frac{2}{3}\eta (g^\al_\be + u^\al u_\be)  \nabla_\al \nabla_\la u^\la  
\\ &
=  \frac{2}{3} \eta  g^\al_\be \nabla_\al \nabla_\la u^\la  + 
\frac{2}{3} \eta u_\be u^\al  \nabla_\al \nabla_\la u^\la  .
\end{split}
\label{3.2}
\end{align}
Plugging \eqref{2.1}, \eqref{3.1}, and \eqref{3.2} into \eqref{1.1} we find
\begin{align}
\begin{split}
\nabla_\al {}^{(1)} t^\al_\be & = 
-\eta g^{\al\mu} \nabla_\al \nabla_\mu u_\be 
-\eta u^\al u^\mu \nabla_\al \nabla_\mu u_\be 
+ \eta u_\be \pi^{\al\mu} \nabla_\al u_\nu \nabla_\mu u^\nu 
- \eta g^\nu_\be \nabla_\nu \nabla_\al u^\al 
\\ &
-\eta u_\be u^\nu \nabla_\nu \nabla_\al u^\al 
- \eta R_{\be\al} u^\al 
-\eta u_\be R_{\nu\al} u^\nu u^\al 
+ \eta \pi_\be^\nu u^\al \nabla_\al u^\mu \nabla_\nu u_\mu 
\\ &
+\frac{2}{3} \eta g^\nu_\be \nabla_\nu \nabla_\al u^\al 
+\frac{2}{3} \eta u_\be u^\nu \nabla_\nu \nabla_\al u^\al 
\\
&
+ \nabla_\al (\eta \pi^{\al\mu} \pi^\nu_\be )(\nabla_\mu u_\nu + \nabla_\nu u_\mu 
- \frac{2}{3} g_{\mu\nu} \nabla_\la u^\la ).
\end{split}
\nonumber
\end{align}
We now group the first two terms, the fourth term with the ninth term, and the fifth term 
with the tenth term, to find
\begin{align}
\begin{split}
\nabla_\al {}^{(1)} t^\al_\be & = 
-\eta( g^{\al\mu} + u^\al u^\be ) \nabla_\al \nabla_\mu u_\be
-\frac{1}{3} \eta g^\nu_\be \nabla_\nu \nabla_\al u^\al 
\\
&
-\frac{1}{3} \eta u_\be u^\nu \nabla_\nu \nabla_\al u^\al 
+ {}^{(1)}B_\be,
\end{split}
\label{4.1}
\end{align}
where 
\begin{align}
\begin{split}
{}^{(1)}B_\be & = 
 \eta u_\be \pi^{\al\mu} \nabla_\al u_\nu \nabla_\mu u^\nu 
- \eta R_{\be\al} u^\al 
-\eta u_\be R_{\nu\al} u^\nu u^\al 
+ \eta \pi_\be^\nu u^\al \nabla_\al u^\mu \nabla_\nu u_\mu 
\\ &
+ \nabla_\al (\eta \pi^{\al\mu} \pi^\nu_\be )(\nabla_\mu u_\nu + \nabla_\nu u_\mu 
- \frac{2}{3} g_{\mu\nu} \nabla_\la u^\la ).
\end{split}
\label{4.2}
\end{align}

\subsection{Calculation of $\nabla_\al {}^{(2)}t^\al_\be$} Compute
\begin{align}
\begin{split}
\nabla_\al {}^{(2)}t^\al_\be & = \nabla_\al \big[ \la ( u^\al u^\mu \nabla_\mu u_\be 
+ u_\be u^\mu \nabla_\mu u^\al ) \big ]
\\ &
 = \la (u^\al u^\mu \nabla_\al \nabla_\mu u_\be + u_\be u^\mu \nabla_\al \nabla_\mu u^\al )
 + \nabla_\al (\la u^\al u^\mu) \nabla_\mu u_\be 
 \\
 &
 + \nabla_\al (\la u_\be u^\mu) \nabla_\mu u^\al .
\end{split}
\nonumber
\end{align}
Using
 $\nabla_\al \nabla_\mu u^\al = \nabla_\mu \nabla_\al u^\al + R_{\mu\al } u^\al$
we find
\begin{align}
\begin{split}
\nabla_\al {}^{(2)}t^\al_\be & = 
\la u^\al u^\mu \nabla_\al \nabla_\mu u_\be 
+ \la u_\be u^\mu \nabla_\mu \nabla_\al u^\al 
+ {}^{(2)}B_\be ,
\end{split}
\label{5.1}
\end{align}
where
\begin{align}
\begin{split}
 {}^{(2)}B_\be & = 
 \la u_\be R_{\mu \al } u^\mu u^\al 
 + \nabla_\al (\la u^\al u^\mu) \nabla_\mu u_\be + \nabla_\al (\la u_\be u^\mu) \nabla_\mu u^\al .
\end{split}
\label{5.2}
\end{align}

\subsection{Calculation of $\nabla_\al {}^{(3)}t^\al_\be$} Compute
\begin{align}
\begin{split}
\nabla_\al {}^{(3)}t^\al_\be & = 
\nabla_\al \big( \frac{1}{3} \pi^\al_\be \nabla_\mu u^\mu \big ) =
\frac{1}{3} \chi \pi^\al_\be \nabla_\al \nabla_\mu u^\mu 
+ \frac{1}{3} \nabla_\al (\chi \pi^\al_\be) \nabla_\mu u^\mu ,
\end{split}
\nonumber
\end{align}
so that
\begin{align}
\begin{split}
\nabla_\al {}^{(3)}t^\al_\be & =  
\chi \frac{1}{3} g^\mu_\be \nabla_\mu \nabla_\al u^\al 
+ \frac{1}{3} \chi u_\be u^\mu \nabla_\mu \nabla_\al u^\al 
+ {}^{(3)} B_\be, 
\end{split}
\label{5.3}
\end{align}
where
\begin{align}
\begin{split}
{}^{(3)} B_\be = \frac{1}{3} \nabla_\al (\chi \pi^\al_\be) \nabla_\mu u^\mu .
\end{split}
\label{5.4}
\end{align}

\subsection{Calculation of $\nabla_\al {}^{(4)}t^\al_\be$} Compute
\begin{align}
\begin{split}
\nabla_\al {}^{(4)}t^\al_\be & = \nabla_\al \big( \chi u^\al u_\be \nabla_\mu u^\mu \big )
\\ &
= \chi u_\be u^\mu \nabla_\mu \nabla_\al u^\al 
+ {}^{(4)}B_\be, 
\end{split}
\label{5.5}
\end{align}
where
\begin{align}
\begin{split}
{}^{(4)}B_\be & = \nabla_\al (\chi u^\al u_\be ) \nabla_\mu u^\mu.
\end{split}
\label{5.6}
\end{align}

\subsection{Calculation of $\nabla_\al {}^{(5)}t^\al_\be$} Compute
\begin{align}
\begin{split}
\nabla_\al {}^{(5)}t^\al_\be & = 
\nabla_\al \Big[ \frac{\la}{4\ep} (u^\al \pi^\mu_\be \nabla_\mu \ep 
+ u_\be \pi^{\al\mu} \nabla_\mu \ep \Big]
\\ &
= \frac{\la}{4\ep} u^\al \pi_\be^\mu \nabla_\al \nabla_\mu \ep 
+\frac{\la}{4\ep} u_\be \pi^{\al \mu} \nabla_\al \nabla_\mu \ep 
+ \nabla_\al \big [ \frac{\la}{4\ep} (u^\al \pi^\mu_\be + u_\be \pi^{\al \mu} ) \big ]
\nabla_\mu \ep 
\\ &
= \frac{\la}{4 \ep} u^\al g^\mu_\be \nabla_\al \nabla_\mu \ep 
+ \frac{\la}{4\ep} u_\be u^\al u^\mu \nabla_\al \nabla_\mu \ep 
+ \frac{\la }{4\ep } u_\be g^{\al\mu} \nabla_\al \nabla_\mu \ep 
\\
&
+ \frac{\la}{4\ep} u_\be u^\al u^\mu \nabla_\al \nabla_\mu \ep 
+  \nabla_\al \big [ \frac{\la}{4\ep} (u^\al \pi^\mu_\be + u_\be \pi^{\al \mu} ) \big ]
\nabla_\mu \ep .
\end{split}
\nonumber
\end{align}
We rearrange the terms, swapping the first and third terms, 
so that 
\begin{align}
\begin{split}
\nabla_\al {}^{(5)}t^\al_\be & = 
\frac{\la }{4\ep } u_\be g^{\al\mu} \nabla_\al \nabla_\mu \ep 
+ \frac{\la}{4\ep} u_\be u^\al u^\mu \nabla_\al \nabla_\mu \ep 
\\ &
+\frac{\la}{4 \ep} u^\al g^\mu_\be \nabla_\al \nabla_\mu \ep 
+ \frac{\la}{4\ep} u_\be u^\al u^\mu \nabla_\al \nabla_\mu \ep 
+ {}^{(5)}B_\be ,
\end{split}
\label{6.1}
\end{align}
where 
\begin{align}
\begin{split}
{}^{(5)}B_\be  & = 
 \nabla_\al \big [ \frac{\la}{4\ep} (u^\al \pi^\mu_\be + u_\be \pi^{\al \mu} ) \big ]\nabla_\mu \ep .
\end{split}
\label{6.2}
\end{align}

\subsection{Calculation of $\nabla_\al {}^{(6)}t^\al_\be$} Compute
\begin{align}
\begin{split}
\nabla_\al {}^{(6)}t^\al_\be & = 
\nabla_\al \Big[ \frac{3\chi}{4\ep} u^\al u_\be u^\mu \nabla_\mu \ep \Big]
= \frac{3\chi}{4\ep} u_\be u^\al u^\mu \nabla_\al \nabla_\mu \ep 
+ \nabla_\al \Big[ \frac{3\chi}{4\ep} u^\al u_\be u^\mu \Big] \nabla_\mu \ep 
\\ &
= \frac{3\chi}{4\ep} u_\be u^\al u^\mu \nabla_\al \nabla_\mu \ep 
+ {}^{(6)}B_\be ,
\end{split}
\label{6.3}
\end{align}
where 
\begin{align}
\begin{split}
{}^{(6)}B_\be & =
\nabla_\al \Big[ \frac{3\chi}{4\ep} u^\al u_\be u^\mu \Big] \nabla_\mu \ep .
\end{split}
\label{6.4}
\end{align}

\subsection{Calculation of $\nabla_\al {}^{(7)}t^\al_\be$} Compute
\begin{align}
\begin{split}
\nabla_\al {}^{(7)}t^\al_\be & =
\nabla_\al \Big[ \frac{\chi}{4\ep} \pi^\al_\be u^\mu \nabla_\mu \ep \Big ] 
= \frac{\chi}{4\ep} (g^\al_\be +u^\al u_\be) u^\mu \nabla_\al \nabla_\mu \ep 
+ \nabla_\al \Big[ \frac{\chi}{4\ep} \pi^\al_\be u^\mu \Big] \nabla_\mu \ep 
\\ &
= \frac{\chi}{4\ep}g^\al_\be u^\mu \nabla_\al \nabla_\mu \ep 
+ \frac{\chi}{4\ep} u^\al u_\be u^\mu \nabla_\al \nabla_\mu \ep 
+ {}^{(7)}B_\be, 
\end{split}
\label{7.1}
\end{align}
where
\begin{align}
{}^{(7)}B_\be & = 
+ \nabla_\al \Big[ \frac{\chi}{4\ep} \pi^\al_\be u^\mu \Big] \nabla_\mu \ep .
\label{7.2}
\end{align}

\subsection{Calculation of $\nabla_\al T^\al_\be$} Using
\eqref{conformal_tensor}, 
\eqref{ideal_tensor}, \eqref{4.1}, \eqref{5.1}, \eqref{5.3}, \eqref{5.5}, 
\eqref{6.1}, \eqref{6.3}, and \eqref{7.1}, we find
\begin{align}
\begin{split}
\nabla_\al T^\al_\be &=
%%
% 4.1
-\eta( g^{\al\mu} + u^\al u^\be ) \nabla_\al \nabla_\mu u_\be
-\frac{1}{3} \eta g^\nu_\be \nabla_\nu \nabla_\al u^\al 
-\frac{1}{3} \eta u_\be u^\nu \nabla_\nu \nabla_\al u^\al 
\\&
%%
%%
% 5.1
+ \la u^\al u^\mu \nabla_\al \nabla_\mu u_\be 
+ \la u_\be u^\mu \nabla_\mu \nabla_\al u^\al 
%%
%%
% 5.3
\\&
+ \chi \frac{1}{3} g^\mu_\be \nabla_\mu \nabla_\al u^\al 
+ \frac{1}{3} \chi u_\be u^\mu \nabla_\mu \nabla_\al u^\al 
\\&
%%
%%
% 5.5
+ \chi u_\be u^\mu \nabla_\mu \nabla_\al u^\al 
%%
%%
% 6.1
+ \frac{\la }{4\ep } u_\be g^{\al\mu} \nabla_\al \nabla_\mu \ep 
+ \frac{\la}{4\ep} u_\be u^\al u^\mu \nabla_\al \nabla_\mu \ep 
\\ &
+\frac{\la}{4 \ep} u^\al g^\mu_\be \nabla_\al \nabla_\mu \ep 
+ \frac{\la}{4\ep} u_\be u^\al u^\mu \nabla_\al \nabla_\mu \ep 
%%
%%
% 6.3
+ \frac{3\chi}{4\ep} u_\be u^\al u^\mu \nabla_\al \nabla_\mu \ep 
\\&
%%
%%
% 7.1 
+\frac{\chi}{4\ep} g^\al_\be u^\mu \nabla_\al \nabla_\mu \ep 
+ \frac{\chi}{4\ep} u^\al u_\be u^\mu \nabla_\al \nabla_\mu \ep 
 + B_\be,
\end{split}
\label{8.1}
\end{align}
where the first three terms on the RHS of \eqref{8.1} 
come from \eqref{4.1}, the fourth and fifth from \eqref{5.1},
the sixth and seventh from \eqref{5.3}, the eighth from \eqref{5.5}, the ninth to twelfth 
from \eqref{6.1}, the thirteenth from \eqref{6.3}, the fourteenth and fifteenth from \eqref{7.1},
and $B_\be$ is given by
\begin{align}
\begin{split}
B_\be  = {}^{(1)} B_\be +  {}^{(2)} B_\be +  {}^{(3)} B_\be +  {}^{(4)} B_\be +  {}^{(5)} B_\be + 
 {}^{(6)} B_\be +  {}^{(7)} B_\be + \nabla_\al {}^{(0)}t^\al_\be,
\end{split}
\label{8.2}
\end{align}
with $ {}^{(1)} B_\be,\dots,  {}^{(7)} B_\be$ given by 
\eqref{4.2}, \eqref{5.2}, \eqref{5.4}, \eqref{5.6}, \eqref{6.2},
\eqref{6.4}, and \eqref{7.2}, respectively, and ${}^{(0)}t^\al_\be$ is given by
\eqref{ideal_tensor}. We now group the terms on the RHS of \eqref{8.1} as follows:
the first and the fourth terms, 
the fifth and the eighth terms,
the second and the sixth terms,
the third and the seventh terms,
the ninth, tenth, and thirteenth terms,
the eleventh and fourteenth terms, 
and the twelfth and fifteenth terms. We obtain:
\begin{align}
\begin{split}
\nabla_\al T^\al_\be &=
(-\eta g^{\al\mu} + (\la -\eta) u^\al u^\mu ) \nabla_\al \nabla_\mu u_\be 
+ (\la +\chi) u_\be u^\mu \nabla_\mu \nabla_\al u^\al 
\\ &
 + \frac{1}{3} (-\eta + \chi) g^\mu_\be \nabla_\mu \nabla_\al u^\al 
+ \frac{1}{3} (-\eta + \chi) u_\be u^\mu \nabla_\mu \nabla_\al u^\al 
\\ &
+ \frac{1}{4\ep} u_\be (\la g^{\al\mu} + (\la + 3\chi) u^\al u^\mu ) \nabla_\al \nabla_\mu \ep 
+ \frac{1}{4\ep} (\la + \chi) g^\mu_\be u^\al \nabla_\al \nabla_\mu \ep 
\\ &
+ \frac{1}{4\ep} (\la + \chi) u_\be u^\al u^\mu \nabla_\al \nabla_\mu \ep 
+ B_\be , 
\end{split}
\label{8.3}
\end{align}
where the first term on the RHS of \eqref{8.3} comes from 
the first and the fourth terms on the RHS of \eqref{8.1}, 
the second term on the RHS of \eqref{8.3} comes from 
the fifth and the eighth terms on the RHS of \eqref{8.1},
the third term on the RHS of \eqref{8.3} comes from 
second and the sixth terms on the RHS of \eqref{8.1},
the fourth term on the RHS of \eqref{8.3} comes from 
the third and the seventh terms on the RHS of \eqref{8.1},
the fifth term on the RHS of \eqref{8.3} comes from 
the ninth, tenth, and thirteenth terms on the RHS of \eqref{8.1},
the sixth term on the RHS of \eqref{8.3} comes from 
the eleventh and fourteenth terms on the RHS of \eqref{8.1}, 
the seventh term on the RHS of \eqref{8.3} comes from 
and the twelfth and fifteenth terms on the RHS of \eqref{8.1}, and we used
that $\nabla_\al \nabla_\mu \ep = \nabla_\mu \nabla_\al \ep$.

Expanding the covariant derivatives and using Notation \ref{notation_B} gives
(\ref{div_T_gauge}).

\subsection{Derivation of (\ref{EE_gauge})} Let us first write (\ref{EE}) in trace reversed
form. Tracing (\ref{EE}) gives
\begin{gather}
R = 4\Lambda - T,
\nonumber
\end{gather}
where $T = g^{\al\be} T_{\al\be}$. (For (\ref{conformal_tensor}) we in fact have 
$T=0$, as it must be for a conformal tensor. But at this point we are 
writing Einstein's equations for a general tensor.) Plugging this for $R$ in (\ref{EE}) gives
\begin{gather}
R_{\al\be} = T_{\al\be} -\frac{1}{2} T g_{\al\be} + \Lambda g_{\al\be}.
\nonumber
\end{gather}
We now proceed to compute $R_{\al\be}$ in local coordinates. In coordinates, we have
\begin{align}
\begin{split}
R_{\al\be} & = \partial_\la \Ga^\la_{\al\be} - \partial_\al \Ga_{\be\la}^\la 
+ \Ga_{\al\be}^\la \Ga_{\la \mu}^\mu - \Ga_{\al\mu}^\la \Ga_{\be\la}^\mu.
\end{split}
\nonumber
\end{align}
Using the definition of the Christoffel symbols $\Ga_{\al\be}^\la$ gives
\begin{align}
\begin{split}
R_{\al\be} & = -\frac{1}{2} g^{\mu\nu}\partial^2_{\mu\nu} g_{\al\be} + \frac{1}{2} (
g_{\al\la }\partial_\be \Ga^\la + g_{\be\la} \partial_\al \Ga^\la )
\\ &
-\frac{1}{2}(\partial_\be g^{\la \mu} \partial_\la g_{\al \mu} 
+ \partial_\al g^{\la \mu} \partial_\la g_{\be\mu} )
-\Ga_{\al\la}^\mu\Ga_{\be\mu}^\la ,
\end{split}
\nonumber
\end{align}
where $\Ga^\la$ is given by
\begin{gather}
\Ga^\la = g^{\mu \nu} \Ga_{\mu\nu}^\la.
\nonumber
\end{gather}
Using that in wave coordinates $\Ga^\la = 0$ and recalling (\ref{conformal_tensor}),
the above gives (\ref{EE_gauge}).

\section{The characteristic determinant}
\label{section_char_det}

In this section we derive (\ref{char_det}).
Because of the structure of the system in (\ref{main_system}) 
it suffices to compute the characteristic determinant of $m(U,\partial)$
in (\ref{matrix_M}). Using Mathematica and (\ref{proportional_eta})
 we find (we are not assuming $a_1 =4$ at this point)
\begin{align}
\begin{split}
\det m(\widehat{U}, \xi) & = \widetilde{p}_1(\widehat{U}, \xi)
\widetilde{p}_2(\widehat{U}, \xi) \widetilde{p}_3(\widehat{U}, \xi),
\end{split}
\nonumber
\end{align}
where
\begin{align}
\begin{split}
\widetilde{p}_1(\widehat{U}, \xi) 
= \frac{1}{12 \widehat{\ep}} \eta^4 (\widehat{u}^\mu \xi_\mu)^2,
\end{split}
\nonumber
\end{align}
\begin{align}
\begin{split}
\widetilde{p}_2(\widehat{U}, \xi) 
= & \,
\left[ (a_2-1)  (\widehat{u}^0)^2 \xi_0^2 + (a_2-1) (\widehat{u}^1)^2 \xi_1^2 
-(\widehat{u}^2)^2 \xi_2^2
+a_2 (\widehat{u}^2)^2 \xi_2^2 -2 \widehat{u}^2 \widehat{u}^3 \xi_2 \xi_3
 \right.
 \\
  & 
  + 2a_2 \widehat{u}^2 \widehat{u}^3 \xi_2 \xi_3
 - (\widehat{u}^3)^2 \xi_3^2 + a_2 (\widehat{u}^3)^2 \xi_3^2
 + \xi_0( 2(-1 +a_2)\xi_1 \widehat{u}^0 \widehat{u}^1 
\\
& 
 + 2(a_2 -1) \xi_2 \widehat{u}^0 \widehat{u}^2
  -2 \xi_3 \widehat{u}^0 \widehat{u}^3 
+ 2  a_2 \widehat{u}^0 \widehat{u}^3 \xi_3- \xi^0 )
\\
 & 
 \left.
 + \xi_1( 2(-1 +a_2)\widehat{u}^1 \widehat{u}^2 \xi_2  + 2(a_2-1) \widehat{u}^1 \widehat{u}^3\xi_3 
 - \xi^1 )
 -\xi_2 \xi^2 - \xi_3 \xi^3
\right]^2
\end{split}
\nonumber
\end{align}
and
\begin{align}
\begin{split}
& \widetilde{p}_3(\widehat{U}, \xi) =
\\ & 
-6 (-2 a_1 \widehat{u}_0 \widehat{u}^0 - 
    a_2 \widehat{u}_0 \widehat{u}^0 + 
    2 a_1 a_2 \widehat{u}_0 \widehat{u}^0 + 
    a_2^2 \widehat{u}_0 \widehat{u}^0
     - 
    2 a_1 \widehat{u}_1 \widehat{u}^1 - 
    a_2 \widehat{u}_1 \widehat{u}^1 
    \\
    &
    + 
    2 a_1 a_2 \widehat{u}_1 \widehat{u}^1 + 
    a_2^2 \widehat{u}_1 \widehat{u}^1 - 
    2 a_1 \widehat{u}_2 \widehat{u}^2 - 
    a_2 \widehat{u}_2 \widehat{u}^2 + 
    2 a_1 a_2 \widehat{u}_2 \widehat{u}^2 + 
    a_2^2 \widehat{u}_2 \widehat{u}^2  
	\\
	&    
    -2 a_1 \widehat{u}_3 \widehat{u}^3 - 
    a_2 \widehat{u}_3 \widehat{u}^3 + 
    2 a_1 a_2 \widehat{u}_3 \widehat{u}^3 + 
    a_2^2 \widehat{u}_3 \widehat{u}^3) (\xi_0
      \widehat{u}^0 + \xi_1 \widehat{u}^1 
    \\
    &
    + \xi_2 \widehat{u}^2 + 
    \xi_3 \widehat{u}^3)^4
    \\
    & 
    - 2 (-a_2 \widehat{u}_0 + 4 a_1 a_2 \widehat{u}_0 + 
    3 a_2^2 \widehat{u}_0) (\xi_0 \widehat{u}^0 + 
    \xi_1 \widehat{u}^1 + 
    \xi_2 \widehat{u}^2 + 
    \xi_3 \widehat{u}^3)^3 \xi^0
    \\
    &
     - 2 (-a_2 \widehat{u}_1 + 4 a_1 a_2 \widehat{u}_1 + 
    3 a_2^2 \widehat{u}_1) (\xi_0 \widehat{u}^0 + 
    \xi_1 \widehat{u}^1 + 
    \xi_2 \widehat{u}^2 + 
    \xi_3 \widehat{u}^3)^3 \xi^1
    \\
    & 
    -2 (-a_2 \widehat{u}_2 + 4 a_1 a_2 \widehat{u}_2 + 
    3 a_2^2 \widehat{u}_2) (\xi_0 \widehat{u}^0 + 
    \xi_1 \widehat{u}^1 + 
    \xi_2 \widehat{u}^2 + 
    \xi_3 \widehat{u}^3)^3 \xi^2 
    \\
    & 
    -2 (-a_2 \widehat{u}_3 + 4 a_1 a_2 \widehat{u}_3 + 
    3 a_2^2 \widehat{u}_3) (\xi_0 \widehat{u}^0 + 
    \xi_1 \widehat{u}^1 + 
    \xi_2 \widehat{u}^2 + 
    \xi_3 \widehat{u}^3)^3 \xi^3 
    \\
    &
     + 5 (3 a_1 \widehat{u}_0 \widehat{u}^0 + 
    2 a_2 \widehat{u}_0 \widehat{u}^0 + 
    a_1 a_2 \widehat{u}_0 \widehat{u}^0 + 
    3 a_1 \widehat{u}_1 \widehat{u}^1 + 
    2 a_2 \widehat{u}_1 \widehat{u}^1 
    \\
    &
    + a_1 a_2 \widehat{u}_1 \widehat{u}^1 + 
    3 a_1 \widehat{u}_2 \widehat{u}^2 + 
    2 a_2 \widehat{u}_2 \widehat{u}^2 + 
    a_1 a_2 \widehat{u}_2 \widehat{u}^2 
    \\
    & 
    + 3 a_1 \widehat{u}_3 \widehat{u}^3 + 
    2 a_2 \widehat{u}_3 \widehat{u}^3 + 
    a_1 a_2 \widehat{u}_3 \widehat{u}^3) (\xi_0
      \widehat{u}^0 + \xi_1 \widehat{u}^1 
    \\
    &
    +\xi_2 \widehat{u}^2 + 
    \xi_3 \widehat{u}^3)^2 (\xi_0
      \xi^0 + 
    \xi_1 \xi^1
     + \xi_2 \xi^2 + 
    \xi_3 \xi^3) 
    \\
    &
    + (3 a_1 \widehat{u}_0 + 2 a_2 \widehat{u}_0 + 
    a_1 a_2 \widehat{u}_0) (\xi_0 \widehat{u}^0 + 
    \xi_1 \widehat{u}^1 + 
    \xi_2 \widehat{u}^2 + 
    \xi_3 \widehat{u}^3) \xi^0 (\xi_0 \xi^0 
    \\
    &
    + \xi_1 \xi^1
     + \xi_2 \xi^2 + 
    \xi_3 \xi^3) + (3 a_1 \widehat{u}_1 + 2 a_2 \widehat{u}_1 + 
    a_1 a_2 \widehat{u}_1) (\xi_0 \widehat{u}^0 
    \\
    &
    + 
    \xi_1 \widehat{u}^1 + 
    \xi_2 \widehat{u}^2
    + \xi_3 \widehat{u}^3) \xi^1 (\xi_0 \xi^0 + 
    \xi_1 \xi^1 + 
    \xi_2 \xi^2 + 
    \xi_3 \xi^3)
    \\
    &
     + (3 a_1 \widehat{u}_2 + 2 a_2 \widehat{u}_2 + 
    a_1 a_2 \widehat{u}_2) (\xi_0 \widehat{u}^0 + 
    \xi_1 \widehat{u}^1 
    \\
    &
    + \xi_2 \widehat{u}^2 + 
    \xi_3 \widehat{u}^3) \xi^2 (\xi_0 \xi^0 + 
    \xi_1 \xi^1 + 
    \xi_2 \xi^2 + 
    \xi_3 \xi^3) 
    \\
    &
    + (3 a_1 \widehat{u}_3 + 2 a_2 \widehat{u}_3 + 
    a_1 a_2 \widehat{u}_3) (\xi_0 \widehat{u}^0 
    \\
    &
    + \xi_1 \widehat{u}^1 + 
    \xi_2 \widehat{u}^2 + 
    \xi_3 \widehat{u}^3) \xi^3 (\xi_0 \xi^0 + 
    \xi_1 \xi^1 + 
    \xi_2 \xi^2 + 
    \xi_3 \xi^3) 
    \\
    &
    + (4 a_2 \widehat{u}_0
      \widehat{u}^0 - a_1 a_2 \widehat{u}_0 \widehat{u}^0 + 
    4 a_2 \widehat{u}_1 \widehat{u}^1 - 
    a_1 a_2 \widehat{u}_1 \widehat{u}^1 + 
    4 a_2 \widehat{u}_2 \widehat{u}^2 
    \\
    &
    - a_1 a_2 \widehat{u}_2 \widehat{u}^2 + 
    4 a_2 \widehat{u}_3 \widehat{u}^3 - 
    a_1 a_2 \widehat{u}_3 \widehat{u}^3) (\xi_0
      \xi^0 + 
    \xi_1 \xi^1 + 
    \xi_2 \xi^2 + 
    \xi_3 \xi^3)^2.
\end{split}
\label{p_3_tilde}
\end{align}
It is not difficult to see, after some manipulations, that $\widetilde{p}_2(\widehat{U},\xi)$
is precisely $p_2(\widehat{U},\xi)$, i.e., (\ref{p2}). Let us now analyze 
$\widetilde{p}_3(\widehat{U},\xi)$. The first term in $\widetilde{p}_3(\widehat{U},\xi)$, that
spans lines 2 to 5 in (\ref{p_3_tilde}), is proportional to $(\widehat{u}^\mu \xi_\mu)^4$.
The terms from lines 6 to 9 combined are also proportional to $(\widehat{u}^\mu \xi_\mu)^4$. Indeed,
the term on the sixth line can be written as
\begin{align}
\begin{split}
- 2 (-a_2 \widehat{u}_0 + 4 a_1 a_2 \widehat{u}_0 + 
    3 a_2^2 \widehat{u}_0) (\xi_0 \widehat{u}^0 + 
    \xi_1 \widehat{u}^1 + 
    \xi_2 \widehat{u}^2 + 
    \xi_3 \widehat{u}^3)^3 \xi^0
    \\
    =
    - 2 (-a_2  + 4 a_1 a_2  + 
    3 a_2^2 ) (\xi_0 \widehat{u}^0 + 
    \xi_1 \widehat{u}^1 + 
    \xi_2 \widehat{u}^2 + 
    \xi_3 \widehat{u}^3)^3 \widehat{u}_0\xi^0,
    \\
\end{split}
\nonumber
\end{align}
and similarly we can group $\widehat{u}_i$ with $\xi^i$ in the terms on the seventh to ninth line.
Factoring then the common factor in lines 6 to 9 gives a term cubic in 
$\widehat{u}^\mu \xi_\mu$ times the term
\begin{gather}
\widehat{u}_0\xi^0 + \widehat{u}_1\xi^1 + \widehat{u}_2\xi^2 + \widehat{u}_3\xi^3.
\nonumber
\end{gather}
But this last term equals $\widehat{u}^\mu \xi_\mu$, which can then be grouped
with the cubic term in $\widehat{u}^\mu \xi_\mu$ producing a term proportional to 
$(\widehat{u}^\mu \xi_\mu)^4$, as claimed. 

The next term in $\widetilde{p}_3(\widehat{U},\xi)$, spanning lines
10 to 13 in (\ref{p_3_tilde}) is proportional to $(\widehat{u}^\mu \xi_\mu)^2$.

We claim that the terms spanning lines 14 to 20, when combined, produce a term
proportional to $(\widehat{u}^\mu \xi_\mu)^2$. To see this, note that as written
the terms in lines 14 to 20 all have a factor $\widehat{u}_0\xi^0 + \widehat{u}_1\xi^1 + \widehat{u}_2\xi^2 + \widehat{u}_3\xi^3$, which equals $\widehat{u}^\mu \xi_\mu$. The term
that begins on line 14 of (\ref{p_3_tilde}) can be written as
\begin{align}
\begin{split}
 (3 a_1 \widehat{u}_0 + 2 a_2 \widehat{u}_0 + 
    a_1 a_2 \widehat{u}_0) (\xi_0 \widehat{u}^0 + 
    \xi_1 \widehat{u}^1 + 
    \xi_2 \widehat{u}^2 + 
    \xi_3 \widehat{u}^3) \xi^0 (\xi_0 \xi^0 
    + \xi_1 \xi^1
     + \xi_2 \xi^2 + 
    \xi_3 \xi^3)
    \\
    =
  (3 a_1  + 2 a_2  + 
    a_1 a_2 ) (\xi_0 \widehat{u}^0 + 
    \xi_1 \widehat{u}^1 + 
    \xi_2 \widehat{u}^2 + 
    \xi_3 \widehat{u}^3)  (\xi_0 \xi^0 
    + \xi_1 \xi^1
     + \xi_2 \xi^2 + 
    \xi_3 \xi^3)   \widehat{u}_0\xi^0,
\end{split}
\nonumber
\end{align}
and similarly we can combine $\widehat{u}_i$ with $\xi^i$ in the other terms
in lines 15 to 20. Factoring then the common factor to all terms in lines
14 to 20 produces a term linear in $\widehat{u}^\mu \xi_\mu$ times
$\widehat{u}_0\xi^0 + \widehat{u}_1\xi^1 + \widehat{u}_2\xi^2 + \widehat{u}_3\xi^3
\equiv u^\mu \xi_\mu$, hence a term quadratic in $\widehat{u}^\mu \xi_\mu$, as claimed.

Therefore, we see that all terms in $\widetilde{p}_3(\widehat{U},\xi)$ contain
a factor of $(\widehat{u}^\mu \xi_\mu)^2$, except for the last term which spans lines
21 and 22. This last term, however, vanishes identically if $a_1 = 4$. In this case we can
factor $(\widehat{u}^\mu \xi_\mu)^2$ from $\widetilde{p}_3(\widehat{U},\xi)$. We
combine the factored $(\widehat{u}^\mu \xi_\mu)^2$ with
$\widetilde{p}_1(\widehat{U},\xi)$, producing $p_1(\widehat{U},\xi)$, i.e., 
(\ref{p1}), and the remainder from $\widetilde{p}_3(\widehat{U},\xi)$ produces
$p_3(\widehat{U},\xi)$, i.e., (\ref{p3}).

\begin{remark}
Without setting $a_1=4$, the above factorization procedure can be used
to show that $\widetilde{p}_3(\widehat{U},\xi)$
factors as
\begin{gather}
A (\widehat{u}^\mu \xi_\mu)^4 + B (\widehat{u}^\mu \xi_\mu)^2
\xi^\la \xi_\la + C (\xi^\la \xi_\la)^2,
\nonumber
\end{gather}
where $A$, $B$, and $C$ depend on $a_1$ and $a_2$. We would like to factor this quartic polynomial
as a product of (real) degree two polynomials, since then we can analyze its roots explicitly.
The above choice $a_1=4$ does exactly this. But other choices of $a_1$ and $a_2$ also lead
to the desired factorization, as showed in \cite{BemficaDisconziNoronha}.
\label{remark_factorization}
\end{remark}

\bibliographystyle{plain}
\bibliography{References.bib}

\end{document}